 \newcommand{\1}{\mathbbm{1}}
\newcommand{\red}[1]{{\color{black} #1}}
\definecolor{purple}{rgb}{0.9,0,0.8}
\definecolor{gray}{rgb}{0.7,0.7,0.7}
\newcommand{\abbr}[1]{{\sc\lowercase{#1}}}
\newcommand{\E}{\mathbb{E}}
\newcommand{\N}{\mathbb{N}}
\newcommand{\MN}{\mathbb{N}}
\newcommand{\nn}{\mathbb{N}}
\newcommand{\Pb}{\mathbb{P}}
\newcommand{\BQ}{\mathbb{Q}}
\newcommand{\BR}{\mathbb{R}}
\newcommand{\R}{\mathbb{R}}
\newcommand{\MZ}{\mathbb{Z}}
\newcommand{\Cmp}{C_c^{1,2}}
\newcommand{\SL}{\sum\limits}
\newcommand{\al}{\alpha}
\newcommand{\ga}{\gamma}
\newcommand{\cC}{\mathcal C}
\newcommand{\CG}{\mathcal G}
\newcommand{\CL}{\mathcal L}
\newcommand{\CS}{\mathcal S}
\newcommand{\MS}{\mathbf S}
\newcommand{\si}{\sigma}
\renewcommand{\phi}{\varphi}
\newcommand{\la}{\lambda}
\newcommand{\umP}{{\underline{\pi}}}
\newcommand{\mP}{\pi}
\DeclareMathOperator{\Exp}{Exp}
\DeclareMathOperator{\supp}{supp}
\bmdefine\betab{\mathbf{\beta}}
\bmdefine\sigmab{\mathbf{\sigma}}
    \numberwithin{equation}{section}
\newcommand{\comment}[1]{}
\newcommand{\eq}{\begin{equation}}
\newcommand{\en}{\end{equation}}
\begin{document}

\theoremstyle{plain}
\newtheorem{thm}{Theorem}[section]
\newtheorem{theorem}[thm]{Theorem}
\newtheorem{lemma}[thm]{Lemma}
\newtheorem{prop}[thm]{Proposition}
\newtheorem{proposition}[thm]{Proposition}
\newtheorem{cor}[thm]{Corollary}
\newtheorem{corollary}[thm]{Corollary}
\newtheorem{rmk}[thm]{Remark}
\newtheorem{defn}[thm]{Definition}
\newtheorem{asmp}[thm]{Assumption}

\theoremstyle{remark}
\newtheorem{exm}{Example}
\newtheorem{clm}{Claim}
\newtheorem{conj}{Conjecture}

\title[Ranked Brownian particles]{
Brownian particles with rank-dependent drifts: \\
out-of-equilibrium behavior}

\author[M. Cabezas]{M. Cabezas$^\star$}
\address{Facultad de Matem\'{a}ticas, Universidad Cat\'{o}lica de Chile}
\email{mncabeza@mat.puc.cl}
\author[A. Dembo]{A. Dembo$^\dagger$}
\address{Departments of Statistics and Mathematics, Stanford University} 
\email{adembo@stanford.edu}
\author[A. Sarantsev]{A. Sarantsev$^\ddagger$}
\address{Department of Statistics and Applied Probability,
University of California, Santa Barbara}\email{sarantsev@pstat.ucsb.edu} 
\author[V. Sidoravicius]{V. Sidoravicius$^\dagger$}
\address{Courant Institute of Mathematical Sciences, NYU, New York, USA;
 NYU-ECNU Institute of Mathematical Sciences at NYU Shanghai, China;
Cermaden, S\~{a}o Jose dos Campos, Brazil}
\email{vs1138@nyu.edu}

\thanks{$^\star$Research partially supported by Iniciativa cient\'{i}fica Milenio  NC120062 and Fondecyt Iniciaci\'{o}n fellowship \#11160715}
\thanks{$^\dagger$Research partially supported by NSF grant 
\#DMS-1613091}
\thanks{$^\ddagger$Research partially supported by NSF grant
\#DMS-1409434}
\subjclass[2010]{60K35,  
82C22, 35Q70, 80A22}
\keywords{Interacting particles, reflecting Brownian motions, 
Stefan problem, non-equilibrium hydrodynamics, stochastic portfolio theory, rank-dependent diffusions}

\begin{abstract}
We study the long-range asymptotic behavior
for an out-of-equilibrium countable
one-dimensional system of Brownian particles
interacting through their rank-dependent
drifts. Focusing on the semi-infinite
case, where only the leftmost particle gets a
constant drift to the right, we derive
and solve the corresponding
one-sided Stefan (free-boundary) equations.
Via this solution
we explicitly determine the limiting
particle-density profile as well as the asymptotic
trajectory of the leftmost particle. While doing
so we further establish stochastic domination and
convergence to equilibrium results for the
vector of relative spacings among the
leading particles.
\end{abstract}

\maketitle

\section{Introduction}\label{sec_intro}

\subsection{Competing Brownian particles and the Atlas model}
Systems of competing Brownian particles interacting through their rank-dependent drift and diffusion coefficient vectors have received much recent attention. For a fixed number of particles $n\in\nn$, such system is given by the unique weak solution of
\eq\label{mainSDE}
\mathrm{d}X_i(t)=\sum_{j \ge 1} \gamma_j\,\1_{\{X_i(t)=X_{(j)}(t)\}}\,\mathrm{d}t
+\,
\sum_{j \ge 1} \sigma_j\,\1_{\{X_i(t)=X_{(j)}(t)\}}
\mathrm{d}W_i(t)\,,
\en
for $i=1,\ldots,n$,
where $\underline{\gamma}=(\gamma_1,\ldots,\gamma_n)$ 
and 
$\underline{\sigma}=(\sigma_1,\ldots,\sigma_n)$ are
some constant drift and diffusion coefficient vectors and $(W_i(t), t \ge 0)$,
$i \ge 1$ are independent standard Brownian motions.
Here $X_{(1)}(t)\leq X_{(2)}(t)\leq\ldots\leq X_{(n)}(t)$
are the ranked particles at time $t$, with which we
associate the $\BR_+^{n-1}$-valued \emph{spacings process}
$\underline{Z}(t)=(Z_1 (t), Z_2 (t), \dots,Z_{n-1}(t))$,
$t \geq 0$, given by
\begin{equation}\label{gap}
Z_k(t) :=
Y_{k+1}(t) - Y_k(t) := X_{(k+1)} (t)- X_{(k)}(t)\,,
\qquad k \ge 1 \,.
\end{equation}
The variables $Y_k(\cdot)$ and $Z_k(\cdot)$ correspond to
the $k$-th ranked particle and $k$-th spacing, respectively. For example, $Y_1=\min_i X_i$ denotes the leftmost particle, and the $i$-th particle has rank $k$ at time $t$
iff $Y_k(t)=X_i(t)$ (breaking ties in lexicographic order, if needed). The index $i$ of a particle $X_i(t)$ is called its {\it name}. We call $\underline{X} = (X_1, X_2, \ldots)$ a {\it system of named particles}, and 
$\underline{Y} = (Y_1, Y_2, \ldots)$ a {\it system of ranked particles}. 

In particular, existence and uniqueness of the weak solution to \eqref{mainSDE} was shown in
\cite{bp} (a work motivated by questions in filtering theory).
The system \eqref{mainSDE} has also reappeared in 
stochastic portfolio theory under the name 
\textit{first-order market model} (see \cite{cp,fe,fk}). 
In this context one models the capitalization of 
the $i$-th stock in a certain portfolio, by $e^{X_i(t)}$, 
with non-increasing $j \mapsto \gamma_j$ and $j \mapsto \sigma_j$
to capture the empirical observation that stocks of a smaller 
capitalization tend to have both larger growth rate and a 
larger volatility. Thanks to its intriguing mathematical features, both ergodicity and
sample path properties of this model have undergone a detailed analysis for fixed $n$ (e.g. \cite{ik,iks,ipbkf,MyOwn3}),
augmented by studies of convergence, asymptotic fluctuations,
concentration and large deviations properties of the solution to \eqref{mainSDE}
for $n \to \infty$ and suitably re-scaled vectors $\underline{\gamma}$; see 
\cite{ips,JM2008,JR2013a,Reygner2014,Reygner2015,sh}, 
\cite{kolli}, \cite{ps} and \cite{dsvz}, respectively
(or \cite{cp2,kps,
ss,sh2} for analysis of some related processes).

\medskip
In this article, we focus on the asymptotic long-range
behavior of the analogous infinite particle system,
focusing on the \textit{infinite Atlas model}
(denoted hereafter by \abbr{Atlas}$_\infty(\gamma)$),
which
has been constructed in \cite[Section 3]{pp}, 
namely, the system \eqref{mainSDE} for
$n=\infty$ and $\underline{\gamma}=(\gamma,0,\ldots)$
for some $\gamma>0$. Informally, its a system of infinitely 
many particles on $\BR$, where at each time the currently 
leftmost particle has added drift $\gamma$ to the right while all 
other particles move as standard Brownian motions 
(named after the Greek mythology about the Titan Atlas 
condemned to hold up the sky for eternity, for here the 
drift of the leftmost particle is what keeps all other 
particles in place).

To rigorously define the
\abbr{Atlas}$_\infty(\gamma)$ (and similar infinite
systems of rank-dependent diffusions), 
let us
call $\underline{x} = (x_n)_{n \ge 1} \in \BR^\infty$  \emph{rankable} if there exists a bijective mapping
$\umP_x : \MN \to \MN$ such that
$x_{(i)} := x_{\mP_x(i)} \le x_{\mP_x(j)}$ for all
$1 \le i \le j \in \MN$. The uniqueness of such
\emph{ranking permutation} $\umP_x : \MN \to \MN$ is
then assured by resolving ties in lexicographic order
(i.e. if $x_{\mP_x(i)} = x_{\mP_x(j)}$ for some
$i<j$, then we set $\mP_x(i) < \mP_x(j)$), and it leads to the ranked terms
$x_{(1)} \le x_{(2)} \le x_{(3)} \le \ldots$
of $\underline{x}$. The solution
of \eqref{mainSDE} starting at some fixed
$\underline{x} \in \BR^{\infty}$ (i.e. having a.s.~
$X_i(0) = x_i$ for all $i \in \MN$), is thus
well defined if a.s.~the resulting process $\underline{X}=(X_1(t),X_2(t),\ldots)$
is rankable at all $t$ (with a measurable
ranking permutation).
%
In this context, recall \cite{iks} (see \cite[Prop. 3.1]{sh2}), that if
\begin{equation}
\label{maincond1}
\SL_{i \ge 1} e^{-\al x_i^2} < \infty \ \ \mbox{for any}\ \ \al > 0 \,,
\end{equation}
then there exists in the weak sense a version of
the thus defined \abbr{Atlas}$_\infty(\gamma)$
starting from $\underline{X}(0)=\underline{x}=(x_i)$,
and it is further unique in law.

Without loss of generality, all \abbr{Atlas}$_n(\gamma)$,
$n \in \N \cup \{\infty\}$, evolutions considered in this paper,
start at a ranked configuration (i.e. $Y_k(0)=X_k(0)$ for all $k \le n$).
Moreover, for $n=\infty$ we assume that our (possibly random) initial configuration $\underline{X}(0)$ is always such that a.s.~\eqref{maincond1} holds.
Indeed, this clearly applies when $\underline{X}(0)$
is sampled as a Poisson point process
on $\mathbb R_+$ of constant intensity $\lambda>0$.
Using the latter law, denoted hereafter as \abbr{ppp}$_+(\lambda)$, is equivalent to having  
the initial configuration $\underline{Z}(0)$ of the gap process drawn as
$\underline{Z}^{(\lambda)}$, namely from the infinite product
\begin{equation}
\label{rhola}
\rho_{\la} = \bigotimes\limits_{k=1}^{\infty}\Exp(\la),\ \ \la > 0,
\end{equation}
of exponential distributions with the same rate $\la$.
 
\begin{rmk}
The evolution of \abbr{Atlas}$_\infty(\gamma)$ particles-configuration
$\underline{X}(\cdot)$, for left-most particle
drift $\gamma>0$ and initial configuration
$\underline{X}(0)$ drawn from \abbr{PPP}$_+(\la)$,
is the same as that of the $\gamma^{-1}$ \abbr{Atlas}$_\infty(1)$ particles-configuration
after scaling time by factor $\gamma^2$ and using
the random initial configuration drawn from \abbr{PPP}$_+(\la/\gamma)$.
Consequently, without loss of generality we restrict our attention
hereafter to the canonical drift choice $\gamma=1$
(which we denote by \abbr{Atlas}$_\infty$).
\label{rmk:rescaling}
\end{rmk}

Of particular note is the case of \abbr{Atlas}$_\infty$ with 
$\underline{X}(0)$ sampled according to
the \abbr{ppp}$_+(2)$ law, or equivalently
$\underline{Z} (0)=\underline{Z}^{(2)} \sim \rho_2$.
Indeed, building on the general theory of stationary distributions 
for reflected Brownian motions in polyhedra (due to \cite{wil}, c.f. 
the survey \cite{Wil1995}), it is shown 
in \cite[Corollary 10]{pp} that the spacings process
$(Z_1(t),\ldots,Z_{n-1}(t))$ for \abbr{Atlas}$_n(1)$ has the
unique invariant measure
\begin{equation}\label{eq:rho-n}
\rho^{(n)} = \bigotimes\limits_{k=1}^{n-1}\Exp\big(2-2k/n)\,, \quad n \in \N \,,
\end{equation}
from which it can be deduced, \cite[Theorem 1]{pp}, that $\rho_2$ is an
invariant measure for the spacings process of \abbr{Atlas}$_\infty$
(see also \cite{rv} for invariant measures of spacings when the particles
follow linear Brownian motions which are repelled by their nearest neighbors
through a potential). In contrast,
consider the semi-infinite Harris system, i.e. when
$\underline \gamma = (0, 0, \dots )$, also starting at
$\underline {X} (0) \sim$ \abbr{ppp}$_+(\lambda)$,
clearly $X_{(1)} (s) \to - \infty$ for
$s \to \infty$ with diverging spacings between
particles at the configuration's left edge (see \cite[Section 4]{ar}
for more results of a similar spirit). Thus, one can informally argue that \cite[Theorem 1]{pp}
implies that adding drift of \emph{critical} value
to the leftmost particle, compensates the
spreading of bulk particles to the left,
thereby keeping the system at equilibrium.
Along this line of reasoning, \cite{dembo2015equilibrium}
verifies \cite[Conj. 3]{pp}, that at the equilibrium
\abbr{PPP}$_+(2)$ initial configuration of \abbr{Atlas}$_\infty$,
\begin{equation}\label{eq:pp-conj}
s^{-1/4} X_{(1)} (s) \stackrel{d}{\rightarrow} c G \,, \quad \textrm{ when }
\;\; s \to \infty \,,
\end{equation}
for standard normal $G$ and some finite, positive constant $c$. Indeed, 
such asymptotic fluctuations at equilibrium were established for
a tagged particle in both doubly-infinite Harris system, \cite{dgl,har}, the symmetric exclusion process associated with the \abbr{srw}
on $\MZ$, \cite{ar,df,lv,rv}, and following \cite{dembo2015equilibrium} also
for 
a discrete version of the Atlas model (see \cite{hernandez2015equilibrium}).

Somewhat surprisingly, \cite{sar4} refutes \cite[Conj. 2]{pp} by 
exhibiting the infinite family of invariant product measures 
$\rho(a) := \bigotimes_{k} \Exp(2 + k a)$, $a > 0$, for the spacings 
process of \abbr{Atlas}$_\infty$. Similar collections 
appeared before in the characterization of all invariant spacings 
measures for certain non-interacting discrete models (see \cite{ra,sh3}),
but the question of determining \emph{all invariant measures} 
for the \abbr{atlas}$_\infty$ spacings, remains open. It is 
further shown in \cite{sar4} that the drift induced by the exponentially 
growing in $x$ number of particles in a fixed size interval around $x$, 
is strong enough for having under 
$\rho(2a)$ that $\E [X_{(1)}(s) - X_{(1)}(0)] = -a s$ and building on it, \cite{tsai} shows that 
the collection $\{X_{(1)}(s)+as\}$ is then 
tight (in contrast with $a=0$, where \eqref{eq:pp-conj} applies).

\subsection{Out of equilibrium: main results}
Our goal is to determine the out of equilibrium
long-range behavior of \abbr{Atlas}$_\infty$. That is,
study its particle configuration in the limit $s \to \infty$
when $\lambda \neq 2$. By the preceding, one should expect having
\begin{equation}\label{eq:lft-rght-lim}
X_{(1)}(s) \to \pm \infty\,, \quad \textrm{ according to } \quad
{\rm{sgn}} (2- \lambda)\,.
\end{equation}
However, nothing has been done in this direction and extra 
caution must be exercised in the presence of the infinitely 
many (other extremal) equilibrium measures $\rho(a)$, $a>0$.
Beyond confirming \eqref{eq:lft-rght-lim}
the precise rate of growth of $|X_{(1)}(t)|$ is also
of interest, as well as the limiting particle density profile,
which is expected to interpolate between the
equilibrium value $2$ near the leftmost particle
and the initial value $\la$ at far away lying regions.
To this end, the key object of study is the
following collection measure-valued processes
\begin{equation}\label{eq:qb-def}
Q^b(t,\cdot) :=b \sum_{i \ge 1} \delta_{X^b_i(t)}
\end{equation}
on $\BR_+ \times \BR$, indexed by $b>0$,
where each of the time-space re-scaled named particles
$X^b_i(t)=b X_i(t/b^2)$, $i \in \N$
of \abbr{Atlas}$_\infty$ is endowed mass $b$.
Whereas $Q^b(t,\BR)=\infty$, we show
that $Q^b(t,(-\infty,r])$ is finite for all
$b$, $t$ and $r<\infty$
(c.f.~Lemma \ref{lem:upperboundforthedensityofQ}),
prompting us
to work in the space $M_\star(\BR)$ of Borel,
locally-finite non-negative measures on $\BR$
which assign finite mass to $(-\infty,0)$.
We equip $M_\star(\BR)$ with the $\cC_\star$-topology
under which the functional
$\mu \mapsto \mu(f) = \int_{\BR} f(x) \mu(dx)$
is continuous for any $f \in \cC_\star :=
\{$continuous, bounded functions on $\BR$ which are
eventually zero$\}$. In analogy with the
$\cC_b$-topology on the space $M_1(\BR)$ of
Borel probability measures, this topology
of $M_\star(\BR)$ is metrizable by
\eq\label{eq:defofdast}
d_\star(\mu_1,\mu_2):=\sum_{r\in\N}2^{-r}\sup_{\substack{\|f\|_{\textrm{BL}}\leq 1\\ \supp(f)\subset(-\infty,r]}}
\,|\mu_1(f) - \mu_2(f)|,
\en
with $\|f\|_{\textrm{BL}}:=\|f\|_{\infty}+\|f\|_{\textrm{Lip}}$ for the Lipschitz pseudonorm
$\|\cdot\|_{\textrm{Lip}}$. Let $\frak{C}$ denote the space of all
continuous $\mu(t,\cdot): \BR_+ \to (M_\star(\BR),d_\star)$, and $\frak{d}$ be any metric in $\frak{C}$ whose topology coincides
with the topology of uniform convergence on compact subsets of $\BR_+$.
We further show that $Q^b(t,\cdot) \in \frak{C}$
(see Proposition \ref{prop:tightnessofQ}),
and establish by
a non-equilibrium hydrodynamics approach, the following
asymptotic for $Q^b(\cdot,\cdot)$ as $b \to 0$.
\begin{theorem}\label{thm:hydrod}
Fixing $\lambda>0$, start the \abbr{Atlas}$_\infty$ evolution at $\underline{X}(0)$ distributed according
to the \abbr{ppp}$_+(\lambda)$. Then,
as $b \to 0$, the $\frak{C}$-valued
measure-valued processes $(Q^b(t,\cdot))_{t\geq0}$ converge
in probability to $Q_\star$. The latter non-random limit has, for each $t \ge 0$,
an absolutely continuous measure $Q_\star(t,\cdot)$ 
whose density with respect to Lebesgue's measure is 
\begin{equation}\label{eq:uast}
u_\star(t,x):=\big[ c_1 + c_2 \Phi(x/\sqrt{t}) \big]
\mathbf{\1}_{\{x > y_\star(t)\}} \,, \quad
y_\star(t):=\kappa \sqrt{t} \,, \qquad \forall t >0 \,,
\end{equation}
for the standard normal \abbr{cdf} $\Phi(\cdot)$ and
constants
\begin{equation}\label{eq:c1c2}
c_1 := \frac{2-\lambda \Phi(\kappa)}{1-\Phi(\kappa)}
\,,\quad
c_2 := \frac{\lambda-2}{1-\Phi(\kappa)} \,.
\end{equation}
The value of $\kappa \in \BR$ is set as the unique solution of
\begin{equation}\label{eq:kappa}
g(\kappa) := \frac{\kappa (1-\Phi(\kappa))}{\Phi'(\kappa)} = 1 - \frac{\lambda}{2} \,.
\end{equation}
Further, the re-scaled
left-most particle $Y_1^b(t) := \min_{i \ge 1} X^b_i(t)$
converges in probability 
to $y_\star(\cdot)$, uniformly over compact sets.
\end{theorem}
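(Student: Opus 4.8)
The plan is to obtain \eqref{eq:uast} as the solution of a one-sided Stefan (free-boundary) problem that arises as the hydrodynamic limit of the re-scaled empirical measures $Q^b$, and then to leverage the convergence $Q^b \to Q_\star$ together with monotonicity to pin down the trajectory of the left-most particle. First I would set up the candidate limiting PDE: away from the free boundary the bulk particles perform independent Brownian motions, so one expects the density $u_\star(t,x)$ to satisfy the heat equation $\partial_t u_\star = \tfrac12 \partial_{xx} u_\star$ on $\{x > y_\star(t)\}$, with two boundary conditions at $x = y_\star(t)$ — a Dirichlet condition $u_\star(t,y_\star(t)) = 2$ fixing the density at the driven edge to its equilibrium value, and a Stefan-type flux condition linking the boundary velocity $\dot y_\star(t)$ to $\partial_x u_\star$ there (encoding that the unit drift of the left-most particle balances the diffusive flux of bulk particles across the edge) — plus the far-field condition $u_\star(t,x) \to \lambda$ as $x \to \infty$. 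One then verifies by direct substitution that the self-similar ansatz $u_\star(t,x) = c_1 + c_2 \Phi(x/\sqrt t)$ on $\{x > \kappa\sqrt t\}$ with $y_\star(t) = \kappa\sqrt t$ solves the heat equation (since $\Phi(x/\sqrt t)$ is a solution), that the far-field and Dirichlet conditions force exactly the constants \eqref{eq:c1c2}, and that the Stefan flux condition reduces to the scalar equation \eqref{eq:kappa}; monotonicity and the limits $g(\kappa) \to -\infty$, $g(\kappa)\to +\infty$ (with $g(0)=0$) as $\kappa \to \pm\infty$ give existence and uniqueness of $\kappa$, with $\mathrm{sgn}(\kappa) = \mathrm{sgn}(2-\lambda)$, matching \eqref{eq:lft-rght-lim}.

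The main analytic work is to prove that $Q_\star$ — the limit whose existence and identification as the unique limit point is asserted in the first part of the theorem, which we may assume — indeed has for each $t>0$ an absolutely continuous marginal with density solving this free-boundary problem, and that the density has the closed form \eqref{eq:uast}. I would argue this in two stages. In the region strictly to the right of the support's left edge, I would use the tightness/regularity estimates (Lemma \ref{lem:upperboundforthedensityofQ} and Proposition \ref{prop:tightnessofQ}) together with a martingale-problem characterization: testing $Q^b(t,\cdot)$ against $f \in \cC_\star$ and passing to the limit $b \to 0$ shows $t \mapsto Q_\star(t,f)$ satisfies the weak heat equation away from the boundary, plus a boundary term accounting for the mass created/destroyed at the moving edge by the driven particle; a Duhamel/uniqueness argument for the resulting weak formulation then forces $Q_\star(t,\cdot)$ to coincide with the measure of density $u_\star(t,\cdot)$. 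The Dirichlet value $u_\star(t,y_\star(t)) = 2$ I would justify as a local-equilibrium statement: near the left-most particle the spacings relax to the critical product measure $\rho_2$, whose reciprocal-mean gives local density $2$ — here I expect to invoke the stochastic-domination and convergence-to-equilibrium results for the leading spacings announced in the abstract.

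The hardest step, I expect, is the identification of the free boundary $y_\star(t) = \kappa\sqrt t$ with the re-scaled left-most particle $Y_1^b(t)$, i.e. the final assertion. The natural route is a sandwiching argument. For the lower bound on $Y_1^b$, note the left-most particle moves at most as a Brownian motion with unit drift, so $Y_1^b(t) \le Y_1^b(0) + (\text{drift and martingale terms})$, which after re-scaling stays (with high probability) above any curve below $\kappa\sqrt t$ — more robustly, if $Y_1^b(t)$ dipped below $y_\star(t)-\delta$ on a set of positive probability, then $Q^b(t,(-\infty,y_\star(t)-\delta/2])$ would carry positive mass in the limit, contradicting $\supp Q_\star(t,\cdot) = [y_\star(t),\infty)$. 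For the upper bound, one uses that $Q^b(t,(-\infty,r])$ converges to $\int_{y_\star(t)}^r u_\star(t,x)\,dx$, which vanishes as $r \downarrow y_\star(t)$; since all the mass sits to the right of $Y_1^b(t)$, this forces $\limsup_b Y_1^b(t) \le y_\star(t)$. Combining the two bounds gives convergence in probability for each fixed $t$; upgrading to uniform convergence on compacts follows from monotonicity of $t \mapsto y_\star(t)$ (for $\kappa\ge 0$) or a simple equicontinuity estimate (using that $Y_1^b$ cannot move faster than roughly Brownian scale), via a Dini-type argument. The subtlety flagged in the introduction — the presence of the other extremal equilibria $\rho(a)$, $a>0$ — is precisely what could break the lower-bound argument if the bulk density near the edge failed to relax to $2$; controlling this requires the domination result to rule out the system locally settling into a $\rho(a)$ with $a>0$, and that is where I would spend the most care.
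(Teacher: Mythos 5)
Your overall architecture (tightness of $Q^b$, limit points satisfy a weak Stefan problem, uniqueness of its solution, then identification of the edge) matches the paper's, and the verification that $u_\star,y_\star$ solve \eqref{eq:init}--\eqref{eq:stefan} with \eqref{eq:c1c2}--\eqref{eq:kappa} is fine. However, there are three genuine gaps. First, the lower bound on $Y_1^b$ in your sandwiching step is circular: if the left-most particle dipped below $y_\star(t)-\delta$, it would contribute only mass $b\to 0$ to $Q^b(t,(-\infty,y_\star(t)-\delta/2])$, so the empirical-measure limit cannot detect it, and "positive mass in the limit" does not follow. The left-most particle could in principle lag arbitrarily far behind the support of $Q_\star$ without disturbing the hydrodynamic limit. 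Closing this requires a separate particle-level estimate; the paper does it by tying the $\epsilon$-quantile of $Q^b(t,\cdot)$ to $Y_1^b(t)$ via the stochastic domination of the spacings by i.i.d.\ exponentials (Proposition \ref{thm:mon-gaps}) together with a Cram\'er bound and the mesoscopic regularity of $t\mapsto Y_1^b(t)$ (Proposition \ref{prop:tightnessofthebottom}, Lemma \ref{lem:mesoscopicregularity}).

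Second, the boundary value $u(t,y(t)^+)=2$ is not obtained in the paper from local relaxation of the leading spacings to $\rho_2$ — indeed the paper explicitly does \emph{not} prove convergence to $\rho_2$ when $\lambda>2$, and even for $\lambda<2$ the fixed-index f.d.d.\ convergence is far from the local-equilibrium-at-diffusive-scales statement you would need. Instead, the condition \eqref{eq:bot-dens} is encoded implicitly in the coefficient of the $f_x(s,y(s))$ term of the weak formulation (Definition \ref{def-weak-Stefan}), which arises from Ito's formula via the unit drift identity \eqref{eq:drift-iden}; it is an output of the weak Stefan problem, not an input justified probabilistically. Third, and most seriously, "a Duhamel/uniqueness argument for the resulting weak formulation" does not exist off the shelf for $\lambda<2$: this is a supercooled Stefan problem with a contracting boundary, for which uniqueness is known to be delicate (the paper states it could not verify the key step of the standard reference and proves uniqueness only among the possible limit points, via a priori bounds $U^0\in[\lambda_-,\lambda_+]$, monotonicity of $y_u$, a weak boundary condition, and an iterative sandwiching by explicit self-similar sub/super-solutions with boundaries $\underline c_k\sqrt t\uparrow\kappa\sqrt t$ and $\bar c_k\sqrt t\downarrow\kappa\sqrt t$, using a maximum principle). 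Without supplying an argument of this kind, the identification of the limit for $\lambda<2$ is not established.
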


\begin{rmk}\label{rem:kappa}
One easily checks that $\kappa \mapsto g(\kappa)$
is strictly increasing, with $g(\kappa) \downarrow
-\infty$ for $\kappa \downarrow -\infty$,
$g(0)=0$ and $g(\kappa) \uparrow 1$ for
$\kappa \uparrow \infty$,
hence the uniqueness
of the solution of \eqref{eq:kappa}, which is
positive for $0<\lambda<2$ and negative for
$\lambda>2$.
\end{rmk}

For insight about the resulting
limiting particle-density profile
$u(t,\cdot) = u_\star(t,\cdot)$,
note that \eqref{eq:c1c2} and \eqref{eq:kappa}
are equivalent to the algebraic equations
\begin{align}
c_1 + c_2 \qquad &= \lambda \,, \label{eq:init-dens} \\
c_1 + c_2 \Phi(\kappa) &= 2 \,, \label{eq:bot}  \\
\kappa + \frac{c_2}{2} \Phi' (\kappa) &= 0 \,.
\label{eq:kappa2}
\end{align}
The relation \eqref{eq:init-dens} amounts to
the initial condition
\begin{equation}\label{eq:init}
\lim_{t \downarrow 0} u(t,x) = \lambda
\1_{x>0}
\,, \qquad \forall x \ne 0 \,.
\end{equation}
Alternatively, \eqref{eq:init-dens}
reflects having the initial particle-density $\lambda$
when $x \to \infty$ (as far away particles are
not yet aware of the drift endowed to the
left-most particle).
Similarly, the relation \eqref{eq:bot} is
due to the particle-density profile near
the left-most particle, quickly reaching
its equilibrium value. That is,
\begin{equation}\label{eq:bot-dens}
u(t,y(t)^+):= \lim_{x \downarrow y(t)} u(t,x) = 2 \,, \qquad \forall t > 0 \,.
\end{equation}
Finally, \eqref{eq:kappa2} is merely saying that, as in Stefan's problem,
the left boundary of our particle-density profile,
namely the re-scaled left-most particle, is moving according to the corresponding density flux, i.e.
\begin{equation}\label{eq:stefan-flux}
u(t,y(t)^+) \frac{dy}{dt} (t) + \frac{1}{2} u_x(t,y(t)^+) = 0 \,, \qquad \forall t>0 \,.
\end{equation}
Indeed, it is easy to verify that 
the function $u_\star(t,x)$ (with the associated,
differentiable function $y_\star(t)=\inf\{x: u_\star(t,x)>0\}$), forms 
a uniformly bounded and uniformly positive on $x \in
(y(t),\infty)$, solution of the one-sided Stefan problem
consisting of the one-dimensional heat equation
\begin{equation}\label{eq:stefan}
u_t(t,x) - \frac{1}{2} u_{xx}(t,x) = 0 
\quad \forall x > y(t), \quad \& \quad
u(t,x) = 0 \quad \forall x < y(t) \,,
\end{equation}
with initial condition \eqref{eq:init},
and boundary values satisfying
\eqref{eq:bot-dens} and \eqref{eq:stefan-flux}. Further
$(u_\star,y_\star)$ is the unique solution of this problem
(see Proposition \ref{prop:uniqueness}).

It is the flux condition \eqref{eq:stefan-flux} which
results with the particles cloud expanding
(namely, $\kappa < 0$), when starting above
the equilibrium density (that is, $\lambda>2$),
while contracting (namely, $\kappa > 0$), when
starting below the equilibrium density
(that is, with $\lambda \in (0,2)$). The rate of
such expansion/contraction is $\sqrt{t}$ with a
non-random leading constant $\kappa$. Specifically,
unraveling $Y^b_1(1)$, $t=b^{-2}$, Theorem \ref{thm:hydrod} yields that: 
\begin{cor} Starting the \abbr{Atlas}$_\infty$ evolution distributed as  \abbr{ppp}$_+(\lambda)$, one has  
\begin{equation}
\frac{Y_1(t)}{\sqrt{t}} \stackrel{p}{\to}  \kappa  \quad
\text{as} \quad t \to \infty. 
\end{equation}
\end{cor}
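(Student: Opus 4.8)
The plan is to deduce the Corollary directly from the last assertion of Theorem~\ref{thm:hydrod}, namely the uniform-on-compacts convergence in probability of the re-scaled left-most particle $Y_1^b(\cdot)$ to the deterministic trajectory $y_\star(\cdot)$, together with the explicit form $y_\star(t)=\kappa\sqrt t$ and the scaling identity $Y_1^b(t)=bY_1(t/b^2)$. First I would fix $t=1$, at which time $y_\star(1)=\kappa$, so that the theorem gives $Y_1^b(1)\stackrel{p}{\to}\kappa$ as $b\to 0$. Unravelling the definition, $Y_1^b(1)=b\,Y_1(b^{-2})$, so setting $s=b^{-2}$ (equivalently $b=s^{-1/2}$) we get $s^{-1/2}Y_1(s)\stackrel{p}{\to}\kappa$ as $s\to\infty$ along the sub-family $\{s=b^{-2}: b>0\}$, which is already all of $(0,\infty)$; renaming $s$ as $t$ yields the claim.

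One subtlety worth a sentence in the write-up: the convergence in Theorem~\ref{thm:hydrod} is stated for the continuous-time index $b\downarrow 0$, and here we are composing with the (continuous, strictly monotone) reparametrisation $b\mapsto b^{-2}$, so no issue of passing to a subsequence or of measurability arises; convergence in probability is preserved under such a deterministic time-change of the limiting parameter. Strictly speaking one does not even need the full ``uniform over compact sets'' strength — evaluation at the single time $t=1$ suffices — but since the theorem delivers uniform convergence it is cleanest to simply invoke continuity of the evaluation map $C(\BR_+,\BR)\ni\omega\mapsto\omega(1)\in\BR$ and the continuous mapping theorem (for convergence in probability) to pass from $Y_1^b(\cdot)\stackrel{p}{\to}y_\star(\cdot)$ in the uniform-on-compacts topology to $Y_1^b(1)\stackrel{p}{\to}y_\star(1)=\kappa$.

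There is essentially no obstacle here: the Corollary is a one-line change of variables once Theorem~\ref{thm:hydrod} is in hand, and all of the genuine work — the hydrodynamic limit, the identification of $(u_\star,y_\star)$ as the Stefan solution, and the determination of $\kappa$ via \eqref{eq:kappa} — is carried out in the proof of the theorem. The only thing to be careful about is the bookkeeping of which variable plays the role of macroscopic time: in the statement of the theorem $t$ is the macroscopic (already re-scaled) time, whereas in the Corollary $t$ denotes the microscopic time of the original \abbr{Atlas}$_\infty$ clock, and the relation between the two is precisely $t_{\mathrm{micro}}=t_{\mathrm{macro}}/b^2$ with $b\to 0$. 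I would therefore write the proof as: ``Apply Theorem~\ref{thm:hydrod} at macroscopic time $t=1$, use $y_\star(1)=\kappa$ together with $Y_1^b(1)=bY_1(b^{-2})$, and substitute $b=t^{-1/2}$,'' which is exactly the computation flagged in the paragraph preceding the Corollary.
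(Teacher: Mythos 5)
Your proposal is correct and is exactly the paper's argument: the corollary is stated in the paper as an immediate consequence of the last assertion of Theorem~\ref{thm:hydrod}, obtained by "unraveling $Y_1^b(1)$" with $t=b^{-2}$, i.e.\ $Y_1^b(1)=bY_1(b^{-2})=t^{-1/2}Y_1(t)\stackrel{p}{\to}y_\star(1)=\kappa$. The bookkeeping of macroscopic versus microscopic time that you flag is the only content of the deduction, and you have it right.
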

\noindent
For $\la = 2$ 
this only shows that 
the asymptotic fluctuations of the leftmost particle are of $o(t^{1/2})$, whereas \cite{dembo2015equilibrium} provides a finer result 
of $O(t^{1/4})$ asymptotic fluctuations that converge in distribution to 
a properly scaled $1/4$-fractional Brownian motion.
%

\noindent
Another immediate consequence of
Theorem \ref{thm:hydrod} is  the convergence in probability
\begin{equation}\label{eq:part-count-lim}
\lim_{s \to \infty} Q^{1/\sqrt{s}} (1,[x_1,x_2])
= \int_{x_1}^{x_2} u_\star(1,r)dr \,,
\end{equation}
for each $x_1<x_2$. That is, the number of \abbr{Atlas}$_\infty$ particles
at time $s$
within $[\sqrt{s} x_1,\sqrt{s}x_2]$ is about
$\sqrt{s} \int_{x_1}^{x_2} u_\star(1,z) dz$
for $s \gg 1$ (which is why we call $u_\star(1,\cdot)$
the limiting particle density profile).

Continuing in this direction, recall that weak convergence of probability
measures on $\R$, with a limiting \abbr{cdf}
$F_\infty$ which is strictly increasing
at all $x$ such that $0 < F_\infty(x) <
\sup_x F_\infty (x) := \overline{F}_\infty$,
implies the convergence for each $q \in (0,\overline{F}_\infty)$
of the $q$-th quantile for the corresponding
\abbr{cdf}-s, with their limit being
$F_\infty^{-1}(q)$. It is easy to verify that
the same applies for convergence in $(M_\star(\R),d_\star)$.
Consequently, with $u_\star(t,x)>0$ at all $x \ge y_\star(t)$, 
we get from Theorem \ref{thm:hydrod} the following limiting
density-profile for the ranked particles $\{Y_{i}\}$ and the
corresponding spacings process $\{Z_i\}$.

\begin{cor}\label{cor:ranked-limit}
Considering the \abbr{Atlas}$_\infty$ evolution started at
a \abbr{ppp}$_+(\lambda)$ distributed $\underline{X}(0)$ for
some $\lambda>0$, let $y_\star(t,q):=[Q_\star(t,\cdot)]^{-1}(q \sqrt{t})$
for $q \ge 0$ and $t>0$. That is, $y_\star(t,q)=y_\star(1,q) \sqrt{t}$. Also, 
\begin{equation}\label{eq:yq-def}
q = \int_{\kappa}^{y_\star(1,q)} u_\star(1,r) dr \,,
\end{equation}
for $u_\star(1,\cdot)$ and $\kappa$ that correspond to
$\lambda$ (via \eqref{eq:uast} and \eqref{eq:kappa}, respectively).
\newline
Then, for any fixed $q>0$, $\delta>0$, 
\begin{align}\label{eq:quantile-q}
& \frac{Y_{q \sqrt{s}}\,(s)}{\sqrt{s}}\stackrel{p}{\to}
 y_\star(1,q)  \,, \quad \text{ as } \quad s \to \infty  \,, \\
\lim_{\epsilon \to 0} \limsup_{s \to \infty} 
\Pb &
\Big[
\big\vert\frac{1}{2\epsilon \sqrt{s}}
\sum_{i=(q-\epsilon)\sqrt{s}}^{(q+\epsilon)\sqrt{s}} Z_i(s) - \frac{1}{u_\star(1,y_\star(1,q))}\big\vert \geq \delta \Big]=0\,. 
\label{eq:quantile-spacings}
\end{align}
\end{cor}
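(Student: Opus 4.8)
\emph{Proof proposal for Corollary~\ref{cor:ranked-limit}.}

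The plan is to read off the ranked particles and the spacings from the empirical cumulative measure and then push Theorem~\ref{thm:hydrod} through a quantile-continuity argument. Write $F^b_t(x):=Q^b(t,(-\infty,x])$; since $Q^b(t,\cdot)$ places mass $b$ at each re-scaled particle and the ranked particles of $\underline X^b$ are $Y^b_k(t)=bY_k(t/b^2)$, the function $F^{1/\sqrt s}_1$ is the right-continuous nondecreasing step function with a jump of size $1/\sqrt s$ at each point $Y_k(s)/\sqrt s$, $k\ge1$. Hence $(F^{1/\sqrt s}_1)^{-1}(q'):=\inf\{x: F^{1/\sqrt s}_1(x)\ge q'\}$ equals $Y_{\lceil q'\sqrt s\rceil}(s)/\sqrt s$. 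On the limit side, by \eqref{eq:uast}--\eqref{eq:bot} the density $u_\star(1,r)=c_1+c_2\Phi(r)$ is continuous, bounded, and strictly positive on $(\kappa,\infty)$, its values running monotonically between $u_\star(1,\kappa^+)=2$ and $u_\star(1,\infty)=\lambda$; therefore $F_\infty(x):=Q_\star(1,(-\infty,x])=\int_\kappa^{x}u_\star(1,r)\,dr$ is continuous on $\BR$, strictly increasing on $[\kappa,\infty)$, with range $[0,\infty)$, and so invertible there. The scaling $u_\star(t,x)=u_\star(1,x/\sqrt t)$ (for $x>\kappa\sqrt t$, both sides vanishing below) then gives at once $Q_\star(t,(-\infty,\sqrt t\,\xi])=\sqrt t\,F_\infty(\xi)$, hence $y_\star(t,q)=\sqrt t\,y_\star(1,q)$ with $y_\star(1,q)=F_\infty^{-1}(q)$ and the identity \eqref{eq:yq-def}.

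Next I would transfer the convergence. Theorem~\ref{thm:hydrod} gives $Q^b(\cdot,\cdot)\to Q_\star$ in probability in $\frak C$, hence, evaluating at the fixed time $t=1$, $Q^{1/\sqrt s}(1,\cdot)\to Q_\star(1,\cdot)$ in probability in $(M_\star(\BR),d_\star)$. Fix a sequence $s_n\to\infty$ and pass to a subsequence along which this convergence holds almost surely. On that a.s.\ event, testing $Q^{1/\sqrt{s_n}}(1,\cdot)$ against suitably normalized Lipschitz functions supported in $(-\infty,r]$ that sandwich $\1_{(-\infty,x]}$ from above and below (which $d_\star$ controls for each fixed $r$) yields $F^{1/\sqrt{s_n}}_1(x)\to F_\infty(x)$ for every $x\in\BR$, since $F_\infty$ is continuous. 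The elementary fact that pointwise convergence of nondecreasing functions to a continuous limit that is strictly increasing wherever positive forces convergence of the generalized inverses at each level $q'$ in the interior of the range then gives $(F^{1/\sqrt{s_n}}_1)^{-1}(q')\to y_\star(1,q')$ a.s., hence, by the subsequence principle, in probability as $s\to\infty$, for every fixed $q'>0$. Finally, since $k\mapsto Y_k(s)$ is nondecreasing, for fixed $q>0$ and any $q'<q<q''$ we have $(F^{1/\sqrt s}_1)^{-1}(q')\le Y_{\lfloor q\sqrt s\rfloor}(s)/\sqrt s\le (F^{1/\sqrt s}_1)^{-1}(q'')$ for all large $s$; letting $q'\uparrow q$, $q''\downarrow q$ and using continuity of $y_\star(1,\cdot)$ yields \eqref{eq:quantile-q}.

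For \eqref{eq:quantile-spacings} I would telescope. With $a=\lceil(q-\epsilon)\sqrt s\rceil$ and $b=\lfloor(q+\epsilon)\sqrt s\rfloor$ one has $\sum_{i=a}^{b}Z_i(s)=Y_{b+1}(s)-Y_a(s)$, so, denoting by $A_{s,\epsilon}$ the average on the left of \eqref{eq:quantile-spacings}, $A_{s,\epsilon}=\tfrac1{2\epsilon}\big(Y_{b+1}(s)/\sqrt s-Y_a(s)/\sqrt s\big)$. Since $(b+1)/\sqrt s\to q+\epsilon$ and $a/\sqrt s\to q-\epsilon$, \eqref{eq:quantile-q} (applied with the same monotonicity sandwich as above) gives, for each fixed $\epsilon>0$, that $A_{s,\epsilon}\to L_\epsilon:=\tfrac1{2\epsilon}\big(y_\star(1,q+\epsilon)-y_\star(1,q-\epsilon)\big)$ in probability as $s\to\infty$. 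Differentiating \eqref{eq:yq-def}, which is legitimate since $u_\star(1,\cdot)$ is continuous and positive at $y_\star(1,q)>\kappa$, gives $\tfrac{d}{dq}y_\star(1,q)=1/u_\star(1,y_\star(1,q))$, so $L_\epsilon\to 1/u_\star(1,y_\star(1,q))$ as $\epsilon\to0$. Writing $A_{s,\epsilon}-1/u_\star(1,y_\star(1,q))=(A_{s,\epsilon}-L_\epsilon)+(L_\epsilon-1/u_\star(1,y_\star(1,q)))$, the first bracket tends to $0$ in probability for fixed $\epsilon$ while the second is deterministic and small for small $\epsilon$, which is exactly \eqref{eq:quantile-spacings}.

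The only genuinely delicate point is the quantile-continuity step of the second paragraph: one must verify carefully that $d_\star$-convergence does force $F^b_1(x)\to F_\infty(x)$ at every $x$ (using that $d_\star$ controls, for each fixed $r$, all test functions supported in $(-\infty,r]$) and that the generalized inverses then converge at each fixed level, together with the monotonicity sandwich that lets one pass from a fixed level $q'$ to the varying level $\lfloor q\sqrt s\rfloor/\sqrt s$. Both ingredients are standard real-analysis facts once the setup is in place; the remainder, including the subsequence principle used to move between convergence in probability and almost-sure statements, is routine bookkeeping.
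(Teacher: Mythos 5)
Your proof is correct and follows precisely the route the paper sketches (it gives no separate proof of Corollary~\ref{cor:ranked-limit} beyond the paragraph preceding it, which asserts exactly the quantile-continuity argument you spell out). You correctly identify $(F^{1/\sqrt s}_1)^{-1}(q')$ with $Y_{\lceil q'\sqrt s\rceil}(s)/\sqrt s$, verify the scaling identity $Q_\star(t,(-\infty,\sqrt t\,\xi])=\sqrt t\,F_\infty(\xi)$ (hence $y_\star(t,q)=\sqrt t\,y_\star(1,q)$ and \eqref{eq:yq-def}), transfer $d_\star$-convergence to pointwise convergence of the half-line masses, invoke the monotone-inverse convergence at each level, and handle the rounding with a monotonicity sandwich — all of which is what the paper's "It is easy to verify that the same applies for convergence in $(M_\star(\R),d_\star)$" leaves implicit. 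The telescoping $\sum_{i=a}^{b}Z_i(s)=Y_{b+1}(s)-Y_a(s)$ and the differentiation of \eqref{eq:yq-def} (legitimate since $u_\star(1,\cdot)$ is continuous and bounded away from zero on $[\kappa,\infty)$, being sandwiched between $\lambda\wedge2$ and $\lambda\vee2$) correctly reduce \eqref{eq:quantile-spacings} to \eqref{eq:quantile-q}.
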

Corollary \ref{cor:ranked-limit} provides the limiting density-profile
of ranked particles {\em in the bulk}, 
where the transition from the equilibrium density to
the initial density occurs. While this corollary 
does not reach all the way to {\em individual spacings} 
(near the left edge), it is supplemented by stochastic domination 
results for our 
spacings process, which are
of independent interest. To present these, first  
recall the following relevant definition.
\begin{defn} Fixing $n \in \N \cup \{\infty\}$, consider two
$\BR^{n}$-valued random variables $\underline{\xi},\, \underline{\xi}'$. We say $\underline{\xi}$ is
{\em stochastically dominated} by $\underline{\xi}'$,
denoted by $\underline{\xi} \preceq \underline{\xi}'$, if using the componentwise partial order for vectors in $\BR^{n}$, we have
\[
\Pb(\underline{\xi} \ge \underline{y}) \le \Pb(\underline{\xi}'
\ge \underline{y})\,, \qquad \forall \underline{y} \in \BR^{n}\,.
\]
Similarly, an $\BR^{n}$-valued process $t \mapsto \underline{V}(t)$
is {\em stochastically increasing} if $\underline{V}(s) \preceq \underline{V}(t)$
for all $s \le t$, and {\em stochastically decreasing} if $\underline{V}(t)
\preceq \underline{V}(s)$ for all $s \le t$.
\end{defn}

\begin{prop}\label{thm:mon-gaps}
Fixing $\lambda>0$, start the \abbr{Atlas}$_\infty$ evolution at $\underline{X}(0)$ distributed according to the \abbr{ppp}$_+(\lambda)$ law
(namely, with the spacings
process $\underline{Z}(0)=\underline{Z}^{(\lambda)} \sim \rho_\lambda$).
\newline
(a) If $\lambda < 2$, then
\begin{equation}\label{eq:dom-la-under2}
\underline{Z}^{(2)} \preceq \underline{Z}(t) \preceq \underline{Z}(s)
\preceq \underline{Z}^{(\lambda)} \,, \qquad  \forall t \ge s \ge 0 \,.
\end{equation}
Further,
$\underline{Z}(t)$ converges in law to $\underline{Z}^{(2)} \sim \rho_2$ as
$t \to \infty$ (in terms of f.d.d. on $\BR_+^\infty$).
\newline
(b) If $\lambda > 2$, then
\begin{equation}\label{eq:dom-la-over2}
\underline{Z}^{(\lambda)} \preceq \underline{Z}(s) \preceq \underline{Z}(t)
\preceq \underline{Z}^{(2)} \,, \qquad  \forall t \ge s \ge 0 \,.
\end{equation}
\end{prop}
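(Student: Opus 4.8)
The plan is to establish the stochastic domination inequalities by a coupling argument, exploiting monotonicity of the Atlas dynamics with respect to the initial spacings configuration, and then to deduce convergence to equilibrium from the sandwiching between the two stationary laws $\rho_\lambda$ and $\rho_2$. The key structural fact I would rely on is the standard \emph{attractiveness} (monotonicity) of rank-dependent Brownian systems: if two copies of \abbr{Atlas}$_\infty$ are driven by the same Brownian motions $(W_i)$ and started from ranked initial data $\underline{Y}(0) \preceq \underline{Y}'(0)$ componentwise, then $\underline{Y}(t) \preceq \underline{Y}'(t)$ for all $t$, and consequently the spacings processes satisfy $\underline{Z}(t) \preceq \underline{Z}'(t)$. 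For the finite system this follows from a Tanaka-type comparison for reflected SDEs (see \cite{ik,iks}); for the infinite system one passes to the limit through the finite-$n$ truncations using the construction in \cite{pp} together with \eqref{maincond1}. I would first state and prove (or cite) this comparison principle as a lemma.

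\textbf{Step 1: reduction to comparison of initial spacings.} Because $\rho_2$ (resp.\ $\rho_\lambda$) is stationary for the spacings process of \abbr{Atlas}$_\infty$ (the former by \cite[Theorem 1]{pp}, the latter because $\rho_\lambda$ is the $a=0$, rate-$\lambda$ analogue — more precisely I must check that the product of rate-$\lambda$ exponentials is stationary only when $\lambda=2$, so for $\lambda\neq2$ I instead use $\rho_\lambda=\underline{Z}^{(\lambda)}$ merely as the \emph{initial} law and monitor it), the monotone coupling immediately gives the \emph{outer} bounds. Concretely, run on the same Brownian motions the three evolutions started from $\underline{Z}^{(2)}$, $\underline{Z}^{(\lambda)}$, and the actual $\underline{Z}(0)=\underline{Z}^{(\lambda)}$. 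For $\lambda<2$ one has, for the exponential product measures, $\underline{Z}^{(2)} \preceq \underline{Z}^{(\lambda)}$ in the stochastic order; by coupling monotonicity (choosing an order-preserving coupling of the initial data) the evolution from $\underline{Z}^{(2)}$ stays stationary and pointwise below the evolution from $\underline{Z}^{(\lambda)}$, which yields $\underline{Z}^{(2)} \preceq \underline{Z}(t)$ for all $t$. The bound $\underline{Z}(t) \preceq \underline{Z}^{(\lambda)}$ is the only genuinely new monotonicity-in-time statement, and it is the heart of part (a).

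\textbf{Step 2: monotonicity in time.} To get $\underline{Z}(t)\preceq\underline{Z}(s)$ for $t\ge s$ (when $\lambda<2$), I would use the Markov property together with the comparison principle: it suffices to show that, for the \abbr{Atlas}$_\infty$ transition semigroup $P_r$ acting on laws of spacings, $\rho_\lambda P_r \preceq \rho_\lambda$ for every $r\ge0$; then applying $P_{s}$ and using that $P$ preserves $\preceq$ (again by the monotone coupling) gives $\rho_\lambda P_t=(\rho_\lambda P_{t-s})P_s\preceq\rho_\lambda P_s$. So everything reduces to the one inequality $\rho_\lambda P_r\preceq\rho_\lambda$, i.e.\ that starting from the sub-critical product law the spacings only get stochastically smaller. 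I expect \textbf{this to be the main obstacle}. The natural route: embed in the finite system, where $\rho^{(n)}=\bigotimes_{k=1}^{n-1}\Exp(2-2k/n)$ of \eqref{eq:rho-n} is the unique invariant law; since $2-2k/n\ge\lambda$ for the relevant range of $k$ when $n$ is large (more care is needed for $k$ near $n$, where one uses that those far spacings decouple on bounded time scales), one has $\rho^{(n)}\preceq\rho_\lambda$ restricted to the first $m$ coordinates, hence $\rho_\lambda P_r^{(n)}\succeq\rho^{(n)}P_r^{(n)}=\rho^{(n)}$... — actually the correct comparison is $\rho_\lambda \preceq$ something stationary, so I would instead compare $\underline{Z}^{(\lambda)}$ with the equilibrium from \emph{above}: for $\lambda<2$ we have $\rho_\lambda \preceq \rho^{(n)}$ on the leading $m$ coordinates only if $\lambda \le 2-2k/n$, which fails. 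The clean argument is: $\underline{Z}^{(\lambda)}\succeq\underline{Z}^{(2)}$ and $\underline{Z}^{(2)}$ is stationary, so by the coupling the law at time $r$ of the $\lambda$-evolution dominates $\rho_2$; for the reverse direction $\rho_\lambda P_r\preceq\rho_\lambda$ I would run the $\lambda$-system forward from a \emph{second} time origin and use a restart/regeneration argument, or appeal to a coupling of $\underline{Z}(0)$ with $\underline{Z}(r)$ showing $\underline{Z}(r)\preceq\underline{Z}(0)$ directly — this is where the detailed SDE comparison, exploiting that the drift $\gamma>0$ is applied only to the leftmost particle and hence only \emph{shrinks} the first spacing on average, must be carried out. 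I would prove it first for finite $n$ via the reflected-SDE comparison of \cite{iks} and then take $n\to\infty$.

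\textbf{Step 3: convergence to equilibrium.} Given \eqref{eq:dom-la-under2}, the f.d.d.\ of $\underline{Z}(t)$ are monotone decreasing in $t$ and bounded below by $\rho_2$, hence converge (in the stochastic order, equivalently weakly since $\BR_+^\infty$ with f.d.d.\ convergence is nice) to some limiting law $\nu$ with $\rho_2\preceq\nu\preceq\rho_\lambda$. Any subsequential limit $\nu$ is stationary for the spacings process (by continuity of $P_r$ under the monotone/weak topology and the semigroup property), and then I invoke the characterization of stationary laws sandwiched between $\rho_2$ and $\rho_\lambda$: by \cite[Theorem 1]{pp} together with the uniqueness considerations in the finite-$n$ approximation (and ruling out the extra measures $\rho(a)$, $a>0$, of \cite{sar4}, which have strictly larger — in fact exponentially growing — rates and are \emph{not} $\preceq\rho_\lambda$), the only such stationary law is $\rho_2$. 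Hence $\nu=\rho_2$ and $\underline{Z}(t)\Rightarrow\rho_2$. Part (b) is entirely symmetric: for $\lambda>2$ the product-exponential order reverses, $\underline{Z}^{(\lambda)}\preceq\underline{Z}^{(2)}$, the time-monotonicity flips to increasing, and the same coupling gives \eqref{eq:dom-la-over2}; I would remark that convergence to $\rho_2$ is \emph{not} claimed here because the sandwich is now between $\rho_\lambda$ (below) and $\rho_2$ (above) only through the leading coordinates, while the bulk density genuinely stays near $\lambda$ (consistent with Corollary \ref{cor:ranked-limit} and the fact that for $\lambda>2$ the cloud expands).
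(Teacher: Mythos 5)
Your overall architecture (sandwich between $\rho_2$ and $\rho_\lambda$, time-monotonicity via the Markov property plus preservation of $\preceq$, then convergence) matches the paper's, and your reduction in Step 2 --- that everything hinges on the single inequality $\underline{Z}(r)\preceq\underline{Z}(0)$ --- is exactly right. But that inequality is precisely where your proposal stops: after discarding the finite-$n$ route you leave the key step as ``must be carried out'', and the heuristic you offer (the drift ``only shrinks the first spacing on average'') is not a stochastic-domination argument. Note also that the comparison lemma you propose to prove or cite only covers two systems with the \emph{same} drift vector and ordered initial data, which cannot by itself yield $\underline{Z}(r)\preceq\underline{Z}(0)$ when both sides start from the same configuration. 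The paper's device, which you are missing, is to introduce an auxiliary \abbr{Atlas}$_\infty(\lambda/2)$ process $\underline{X}'$ with the \emph{same} initial configuration as $\underline{X}$. By the scaling of Remark \ref{rmk:rescaling}, $\rho_\lambda$ is exactly invariant for the spacings of \abbr{Atlas}$_\infty(\lambda/2)$, so $\underline{Z}'(t)\sim\rho_\lambda$ for all $t$; and since for $\lambda<2$ the drift gaps satisfy $\gamma_2-\gamma_1=-1\le-\lambda/2=\gamma_2'-\gamma_1'$, the drift-comparison result \cite[Corollary 3.12(ii)]{sar2} gives $\underline{Z}(t)\preceq\underline{Z}'(t)\sim\rho_\lambda$. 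This converts the one ``genuinely new'' monotonicity statement into a comparison of two systems with \emph{different drifts} but identical initial data --- a tool you never invoke.

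The second gap is in Step 3. You propose to identify the limit $\nu$ as the unique stationary law sandwiched between $\rho_2$ and $\rho_\lambda$, after ``ruling out'' the measures $\rho(a)$ of \cite{sar4}. But the paper explicitly notes that determining \emph{all} invariant measures for the \abbr{Atlas}$_\infty$ spacings is an open problem, so you cannot conclude that $\rho_2$ is the only invariant law dominated by $\rho_\lambda$; moreover, passing stationarity to a subsequential f.d.d.\ limit requires a Feller-type continuity of the infinite-dimensional semigroup that you do not establish. The paper bypasses all of this by citing \cite[Theorem 4.7]{sar2}, which directly yields f.d.d.\ convergence to $\rho_2$ whenever the initial spacings dominate $\rho_2$ (as $\rho_\lambda$ does for $\lambda<2$). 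With those two ingredients supplied, the rest of your argument (outer bounds via coupling with stationary copies, iteration of $\underline{Z}(r)\preceq\underline{Z}(0)$ through the Markov property, part (b) by symmetry) is sound and agrees with the paper.
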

The preceding proposition states that starting
\abbr{atlas}$_\infty$ from $\rho_{\la}$ distributed spacings,
if $\la < 2$ the initial spacings are 
stochastically larger than the invariant (flat, ie $\rho(0)$), law, 
and as time increases, they become smaller, converging 
weakly to the equilibrium $\rho_2$. In contrast, when $\lambda>2$ the 
initial spacings are stochastically smaller than the invariant law and 
become wider as time increases. In this case we do not prove 
convergence to the invariant measure $\rho_2$. Indeed, determining 
which initial configurations yield convergence to the invariant 
spacings distribution $\rho_2$, is an interesting open problem. 



\subsection{Sketch of proof of Theorem~\ref{thm:hydrod}}
The Stefan problem \eqref{eq:stefan-flux}-\eqref{eq:stefan}
with boundary conditions similar to \eqref{eq:bot-dens}
and strictly positive, jump initial conditions,
appeared before in \cite{lov}, where the effect of a
single (tagged) asymmetric particle on (truly) doubly-infinite
symmetric exclusion processes on $\MZ$ is considered 
(see \cite[formula (5.2)]{lov}). 
The key to the analysis of
\cite{lov} is the interpretation, made already in
\cite[Section 4]{ar2}, of the spacings between particles
for the exclusion process associated with a \abbr{srw}
on $\MZ$, as a
series of queues, also known as the zero-range process
with constant rate, which conveniently admits product
equilibrium invariant measures (c.f.~\cite{fer,pp} and the references
therein, for various other
works that utilize such connections).

Lacking such connection here, in Section \ref{sec:comparison} we build on 
\cite{sar2} to prove Proposition \ref{thm:mon-gaps}. Combining 
these comparison results in 
Section \ref{sec:apriori-Qb} with large deviation
estimates for i.i.d.~Brownian motions and for
the initial \abbr{ppp}, we establish the a.s.~
pre-compactness and suitable regularity
of $\{Q^b(\cdot,\cdot), b>0\}$,
hence the existence of limit points in $\frak{C}$
when $b \to 0$. Upon justifying the application of
Ito's lemma, which results with a diminishing
martingale (noise) term as $b \to 0$, we deduce
in Proposition \ref{prop:subsequentiallimitssolvestefan}
that all limit point of $Q^b(\cdot,\cdot)$
satisfy the same weak (distributional) form of
\eqref{eq:init}-\eqref{eq:stefan} given
in Definition \ref{def-weak-Stefan}.
Taking advantage of the a-priori regularity
properties of such limit points, we reformulate
in Section \ref{sec:pde-stefan} our weak form,
in terms of the one-sided Stefan
problem alluded to above, thereby using standard
\abbr{PDE} tools to establish the uniqueness of its
solution (and thus concluding the proof of
Theorem \ref{thm:hydrod}).

\subsection{Open problems.}
The $\mathbb{N}$-valued index $I(\cdot)$ 
of the named particle occupying the left-most position, is given by
\[
X_{I(t)}(t)=X_{(1)}(t)\,.
\]  
Theorem \ref{thm:hydrod} does not track $I(t)$. However,
given its diffusive Brownian scaling, we postulate that 
$\{ \sqrt{t}I(t) \}_{t\geq0}$
is tight and converges in distribution (as $t \to \infty$).
The limiting density is further conjectured to be 
\[
f(x)=\frac{1}{2}p_{x/\lambda}(1,y_\star(1)),
\]
with $p_x(t,y)$ the fundamental solution  
of the one-dimensional heat equation \eqref{eq:stefan}
for domain $y_\star(t) = \kappa\sqrt{t}$, starting from $\delta_x$ 
and having the Newman (reflecting) boundary condition at $y_\star(\cdot)$. 
\newline
The related occupation measure of the left-most position during $[0,t]$
in terms of named particles, is given by 
\[
T(k,t)=\int_0^t\1_{\{I(s)=k\}} ds \,, \qquad k \in \mathbb{N} \,.
\] 
Similarly to \eqref{eq:quantile-spacings}, we expect the convergence 
of $(2 \epsilon \sqrt{t})^{-1} \sum_{|x-q| \le \epsilon} 
T([x \sqrt{t}],t)$ when $t \to \infty$ followed by $\epsilon \to 0$,
but proving such convergence and characterizing the limit measure, are
both open problems.

\section{
Proof of Proposition~\ref{thm:mon-gaps}}\label{sec:comparison}

\medskip
We rely here on \cite{sar2} which deals with the 
infinite particle version of \eqref{mainSDE}, in which 
particles are ranked from left to right, and the one which currently has rank $k$ moves as a Brownian motion with drift coefficient $\ga_k$ and diffusion coefficient $\si_k^2$ (having $\ga_k$ and $\si_k^2,\ k \ge 1$
as model parameters). Indeed, 
since $\rho_2 \preceq \rho_{\la}$ when $\lambda<2$,
the convergence of f.d.d. when starting at $\rho_\la$, $\lambda<2$,
is a direct corollary of \cite[Theorem 4.7]{sar2}. 
Turning to establish the claimed stochastic domination, we show only
\eqref{eq:dom-la-under2}, as the proof of part (b) of the proposition 
follows analogously. To this end, we first prove that the 
spacings process $\underline{Z}$ is stochastically decreasing. That is,
\begin{equation}
\label{eq:gap-process-is-nonincreasing}
\underline{Z}(t) \preceq \underline{Z}(0),\ t \ge 0.
\end{equation}
Let $\underline{Z}' = (\underline{Z}'(t), t \ge 0)$ denote
the spacings process of an auxiliary \abbr{atlas}$_\infty(\la/2)$
process $\underline{X}' = (\underline{X}'(t), t \ge 0)$ which 
has the same initial conditions as $\underline{X}$, that is 
$\underline{X}(0) = \underline{X}'(0)$. 
Similarly, $\underline{Y} = (Y_1, Y_2, \ldots)$ and $\underline{Y}' = (Y'_1, Y'_2, \ldots)$ denote the corresponding ranked systems, namely
$$
Y_k(t) \equiv X_{(k)}(t),\ \qquad \ Y'_k(t) \equiv X'_{(k)}(t)\,.
$$
The \abbr{Atlas}$_{\infty}(1)$ process $\underline{X}$ corresponds to 
\eqref{mainSDE} with drift coefficients 
$\ga_1 = 1$, $\ga_2 = \ga_3 = \ldots = 0$ and unit diffusion coefficients,
while the \abbr{Atlas}$_{\infty}(\la/2)$ process $\underline{X}'$
has the drift coefficients
${\ga}'_1 = \la/2$, $\ga'_2 = {\ga}'_3 = \ldots = 0$, 
and unit diffusion coefficients. 
 For $0 < \la < 2$, we have that $\ga_2 - \ga_1 = -1 \le -\la/2 = {\ga}'_2 - 
{\ga}'_1$, while $\ga_{n+1} - \ga_n = 0 = {\ga}'_{n+1} - {\ga}'_n$ for 
all $n \ge 2$. Recalling \cite[Remark 4]{sar2}, 
from \cite[Corollary 3.12(ii)]{sar2}, we deduce that
$$
\underline{Z}(t) \preceq \underline{Z}'(t),\ t \ge 0 \,.
$$
With $\rho_\la$ an invariant measure for the spacings process 
of \abbr{Atlas}$_{\infty}(\la/2)$, clearly 
$\underline{Z}'(t) \backsim \rho_{\la}$ for all $t \ge 0$, 
hence the preceding stochastic domination translates 
into \eqref{eq:gap-process-is-nonincreasing}.
We proceed to show by a similar reasoning that  
\begin{equation}
\label{equation:comparison-gaps}
\underline{Z}(t + s) \preceq \underline{Z}(t),\ \ t, s \ge 0.
\end{equation}
To this end, note that the process 
$\underline{X}^{(s)} (\cdot) := \underline{X}(\cdot + s)$ satisfies \eqref{mainSDE} 
with the same parameters as $\underline{X}(\cdot)$, but starting from 
$\underline{X}^{(s)}(0) = \underline{X}(s)$ as the initial law
and having $\underline{Y}^{(s)}(\cdot) := \underline{Y}(\cdot + s)$ as its 
ranked particles. 
The initial spacings for $\underline{X}^{(s)}(\cdot)$ are
$$
\underline{Z}^{(s)}(0) = \underline{Z}(s) 
\preceq 
\underline{Z}(0)
$$
(in view of \eqref{eq:gap-process-is-nonincreasing}), hence
applying \cite[Corollary 3.10(ii)]{sar2} to the pair of systems 
$\underline{Y}(\cdot)$ and $\underline{Y}^{(s)}(\cdot)$ yields  
\eqref{equation:comparison-gaps} (see also \cite[Remark 4]{sar2}).

Finally, to show that $\underline{Z}(t) \succeq \rho_2$ for $t \ge 0$ we
consider yet another auxiliary process $\underline{X}''$, which is an 
\abbr{Atlas}$_{\infty}(1)$ whose spacings $\underline{Z}''$ start at
the invariant law $\underline{Z}''(0) \backsim \rho_2$. Clearly,
$\underline{Z}''(t) \backsim \rho_2$ for all $t \ge 0$ and since 
$0<\la<2$,
$$
\underline{Z}(0) \backsim \rho_{\la} \succeq \rho_2 \backsim \underline{Z}''(0)\,.
$$
Using again \cite[Corollary 3.10(ii)]{sar2}, we see that
$\underline{Z}(t) \succeq \underline{Z}''(t) \backsim \rho_2$, as claimed. 
\qed

\section{Tightness and regularity of $Q^b(\cdot,\cdot)$}
\label{sec:apriori-Qb}

Throughout this section we consider the point processes
$Q^b(\cdot,\cdot)$ of \eqref{eq:qb-def} for the
\abbr{Atlas}$_{\infty}$ model
$\{X_i(t), i \ge 1, t \ge 0\}$ starting at
$\underline{X}(0)$ drawn from the \abbr{PPP}$_+(\la)$
distribution. In this context, our first result
justifies the earlier statement that for each $b>0$,
with probability one $t \mapsto Q^b(t,\cdot)$ is an $M_\star(\R)$-valued function.
\begin{lemma}\label{lem:upperboundforthedensityofQ}
For any $T<\infty$ there exist $c_T(r) \to 0$ as $r \to -\infty$, 
such that $\sup_r \{ c_T(r)/(1+r^2) \} < \infty$ and  
\begin{equation}\label{eq:Qb-in-Mast}
\sup_{b \le 1} \, \Pb\left[\sup_{t\in [0,T]} Q^b (t,(-\infty,r))
\geq x\right] \leq c_{T}(r) x^{-2}\,, \qquad \forall x > 0 \,.
\end{equation}
\end{lemma}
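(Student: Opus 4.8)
The plan is to bound the probability that too many re-scaled particles lie to the left of $r$ at some time in $[0,T]$, by tracing these particles back to the initial configuration. Recall $Q^b(t,(-\infty,r)) = b\,\#\{i : X_i^b(t) < r\} = b\,\#\{i : X_i(t/b^2) < r/b\}$, so with $T' := T/b^2$ and $r' := r/b$ it suffices to control $N := \#\{i : \exists\, s \le T', \ X_i(s) < r'\}$, the number of names ever seen left of $r'$ during $[0,T']$. Here I would exploit the structure of \abbr{Atlas}$_\infty$: all particles except the current leftmost are driftless Brownian motions, and the leftmost only gets a push to the \emph{right}. Hence, pathwise, each named coordinate $X_i(\cdot)$ is stochastically dominated by $x_i + W_i(\cdot)$ with $W_i$ a standard Brownian motion (the extra drift can only move a particle to the right; I would make this precise either via the comparison results of \cite{sar2} invoked in Section~\ref{sec:comparison}, or directly from the synchronous-coupling representation). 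Consequently, on the event that particle $i$ (with $x_i \ge 0$, since $\underline{X}(0)$ is a \abbr{ppp}$_+(\la)$ configuration, hence ranked and nonnegative) ever reaches below $r'<0$ before time $T'$, we have $\min_{s\le T'} W_i(s) < r' - x_i$, an event of probability $2\Phi\big((r'-x_i)/\sqrt{T'}\big) \le \exp\!\big(-(x_i - r')^2/(2T')\big)$ for $x_i \ge r'$.

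Next I would turn this into a tail bound for $N$. Enumerating the initial points $0 \le x_1 \le x_2 \le \cdots$ of the \abbr{ppp}$_+(\la)$, conditionally on these points $N \le \sum_{i : x_i \ge r'} B_i$ where the $B_i$ are independent Bernoulli variables with $\Pb(B_i = 1) = p_i := 2\Phi\big((r'-x_i)/\sqrt{T'}\big)$. I'd then estimate $\E[N] \le \sum_i p_i$ and $\E[N^2] \le \E[N] + (\E[N])^2$ (independence), after first taking expectation over the Poisson configuration: $\E\big[\sum_i p_i\big] = \la \int_0^\infty 2\Phi\big((r'-y)/\sqrt{T'}\big)\,dy \le \la\sqrt{T'}\int_{-r'/\sqrt{T'}}^\infty 2\Phi(-v)\,dv$, a Gaussian integral that is finite and, crucially, decays faster than any polynomial in $|r'|/\sqrt{T'}$; together with the Poisson variance contribution this gives a clean bound on $\E[N^2]$. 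Undoing the scaling, $\E\big[(Q^b(t,(-\infty,r)))^2\big]$ for a fixed $t$ is controlled by $b^2\E[N] + b^2(\E[N])^2$; since $b \le 1$, $T' = T/b^2$, and $r' = r/b$, the quantity $b^2 \cdot \la \sqrt{T/b^2} \cdot (\text{Gaussian tail in } r/\sqrt T)$ collapses to $\la\sqrt{T}\cdot(\text{Gaussian tail in } r/\sqrt T)$ with no surviving $b$-dependence, and similarly for the squared term — this is where the particular Brownian scaling pays off. Thus $\sup_{b\le 1}\E\big[(Q^b(t,(-\infty,r)))^2\big] \le \tilde c_T(r)$ with $\tilde c_T(r)$ decaying super-polynomially (hence certainly $\tilde c_T(r)/(1+r^2) \to 0$, and bounded).

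To pass from a fixed-time second moment to the supremum over $t \in [0,T]$ required by \eqref{eq:Qb-in-Mast}, I would observe that the relevant object $N$ — the number of names \emph{ever} seen left of $r'$ in $[0,T']$ — already dominates $\sup_{t\le T'} \#\{i : X_i(t) < r'\}$, so the running supremum costs nothing: the bound $\E[(\sup_{t\le T}Q^b(t,(-\infty,r)))^2] \le b^2\E[N^2]$ holds directly, and Markov's inequality applied to $(\sup_{t}Q^b(t,(-\infty,r)))^2$ yields \eqref{eq:Qb-in-Mast} with $c_T(r) := \sup_{b\le 1} b^2\E[N^2]$. Finally one checks $\sup_r\{c_T(r)/(1+r^2)\} < \infty$, which is immediate since $c_T(r) \to 0$ as $r \to -\infty$ and $c_T(r)$ is bounded on any compact range of $r$ (indeed bounded overall, being dominated by $\la\sqrt T \cdot \const + (\la\sqrt T\cdot\const)^2$ at $r$ near $0^-$, say — and for $r \ge 0$ the statement is vacuous since $(-\infty,r)$ need not have finite mass, so one reads \eqref{eq:Qb-in-Mast} for $r < 0$). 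The main obstacle I anticipate is the first step: making rigorous the pathwise (or stochastic) domination of each \emph{named} coordinate $X_i(\cdot)$ by a driftless Brownian motion started at $x_i$, since the identity of the leftmost particle changes over time and the drift is not applied to a fixed coordinate; I would handle this via the synchronous coupling / comparison machinery of \cite{sar2} (cf. the argument in Section~\ref{sec:comparison}), comparing \abbr{Atlas}$_\infty$ with the driftless Harris system, for which each coordinate is genuinely a Brownian motion with its initial position, and noting that adding the right-drift to the leftmost particle only shifts coordinates to the right in the coupling.
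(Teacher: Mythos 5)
Your proposal follows essentially the same route as the paper: reduce to the driftless Harris system via the synchronous coupling $X_i(t)=\bar X_i(t)+\Delta_i(0,t)\ge \bar X_i(t)$ (which resolves the "main obstacle" you flag, since the drift term is pathwise nonnegative no matter which name it is applied to), dominate the running supremum over $t\in[0,T]$ by the count of names whose running minimum ever drops below $r/b$, bound that count's second moment using the Poisson structure of the initial data together with the law of $\min_{s\le T}W(s)$, and finish with Markov's inequality --- the paper's only difference is a bookkeeping shortcut (bounding $\E[N_+(r-M_1)N_+(r-M_2)]$ by $\E[N_+(r-\bar M)^2]$ with $\bar M=M_1\wedge M_2$) in place of your conditional Bernoulli-variance-plus-Campbell computation, and note that unconditionally you only get $\E[N^2]\le 2\,\E[N]+(\E[N])^2$ rather than $\E[N]+(\E[N])^2$, since the conditional mean $\sum_i p_i$ itself fluctuates; this changes nothing. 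One small correction: the case $r\ge 0$ is not vacuous --- $Q^b(t,(-\infty,r))$ is still a.s.\ finite there and the claim $\sup_r\{c_T(r)/(1+r^2)\}<\infty$ genuinely requires a quadratic-in-$r$ bound for large positive $r$ --- but your own estimate, extended via $\E\big[\sum_i p_i\big]\le \lambda\big(r/b + C\sqrt{T}/b\big)$ and multiplied through by $b\le 1$, delivers exactly $c_T(r)=O(1+r^2)$, so no new idea is needed.
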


\begin{proof}
Let $\bar{Q}^b(t,\cdot)$ be defined as in \eqref{eq:qb-def}, now 
for $\bar{X}^b_i(t):= b\bar{X}_i(b^{-2}t)$, where $\{\bar X_i(t)\}$
are the named particles of the semi-infinite Harris
system (ie. \abbr{atlas}$_\infty(0)$), starting at
$\underline {\bar X} (0) = \underline{X} (0) \sim$ \abbr{ppp}$_+(\lambda)$ and 
using the same driving Brownian motions $\{W_i(t)\}$.
For any $r \in \R$, endowing a positive drift $\gamma_1$ to 
some coordinates of $\underline{\bar X}(\cdot)$ only decreases the number of 
particles in $(-\infty,r)$, regardless of which coordinate
such drift applies at any given time. Hence, 
it suffices to prove \eqref{eq:Qb-in-Mast} for $\bar{Q}^b$ instead of
$Q^b$. 
Moreover, 
\begin{equation}\label{eq:china}
\sup_{t\in[0,T]} \bar{Q}^b(t,(-\infty,r)) \leq 
b \sum_{i \ge 1} \1_{(-\infty,r)} \left(\min_{t\in[0,T]} \bar{X}_i^b(t)\right) 
:=\bar{N}^{b,r}_T \,,
\end{equation}
so Markov's inequality yields \eqref{eq:Qb-in-Mast} for 
\begin{equation}\label{eq:displayaboveabove}
c_T(r) = \sup_{b \le 1} \{c_T(b,r)\} \,, \qquad 
c_T(b,r) := \E\left[ \big(\bar{N}^{b,r}_T \big)^2\right].
\end{equation}
Turning to bound $c_T(b,r)$, we fix $T<\infty$,
with $M_i$ denoting i.i.d. copies of the non-positive
$\min_{t \le T} \{W(t)\}$ and $\bar M := M_1 \wedge M_2$. Since
$\bar{X}^b_i (t) - \bar{X}^b_i(0)$ are i.i.d. standard Brownian 
motions, we see that $b^{-1} \bar N^{b,r}_T$ counts 
points for which $\bar{X}^b_i (0) < r-M_i$. 
Further, $\{\bar{X}^b_i(0), i \ge 1\}$ are drawn from a 
Poisson$(\lambda/b)$ point process $N_+(\cdot)$ on $\R_+$ 
with an extra point at the origin. Recalling that 
$\E [N_+(y)^2] = 1 + (y \lambda/b)^2 + 3 y \lambda/b$ 
for $y \ge 0$, we get by Fubini's theorem that for any $b \le 1$, 
\begin{align}\label{eq:amor}
c_T(b,r) &= b^2 \E [ N_+(r-M_1) N_+(r-M_2) ] 
\le b^2 \E[ N_+(r - \bar M)^2] \nonumber \\
& = \E\Big[ 
\1_{\{\bar M \le r\}} \, \big( b^2 + \lambda^2 (r-\bar M)^2 + 3 b \lambda  (r-\bar M) 
\big) \Big] \nonumber \\
& \le \E \Big[ \1_{\{\bar M \le r\}} \, \big( 1 + 
\lambda^2 (r-\bar M)_+^2 + 3 \lambda (r-\bar M)_+ \big) \Big] \,.
\end{align}
The \abbr{rhs} of \eqref{eq:amor} is a finite 
bound on $c_T(r)$, that decays to zero as $r \downarrow -\infty$
and is further bounded above by $4 (1 + \lambda^2 \E |\bar M|^2 + \lambda^2 r^2)$ 
as claimed.
\end{proof}

Building on Lemma \ref{lem:upperboundforthedensityofQ}, we show that the 
$\frak{C}$-valued
processes $\{t \mapsto Q^b(t,\cdot): b \le 1\}$
are uniformly tight (namely, the corresponding laws are uniformly 
tight as probability measures on $\frak{C}$), hence have limit points 
in distribution in $(\frak{C},\frak{d})$.
\begin{proposition}\label{prop:tightnessofQ}
For any $t \ge 0$ fixed, the collection
$\{Q^b(t,\cdot), b \le 1\}$ is uniformly tight in 
$(M_\star(\R),d_\star)$. Further, the family of $(M_\star(\R),d_\star)$-valued
processes $\{ t \mapsto Q^b(t,\cdot): b \le 1 \}$ is uniformly 
tight
in $\frak{C}$. 
\end{proposition}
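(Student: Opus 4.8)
The plan is to establish tightness in two stages: first at a fixed time $t$, then at the level of the path-valued processes. For the fixed-time statement, I would use the characterization of precompactness in $(M_\star(\R),d_\star)$: since the metric $d_\star$ of \eqref{eq:defofdast} is built from test functions supported in $(-\infty,r]$ for $r \in \N$, a family of measures $\{\mu_b\}$ is uniformly tight iff for each $r$ the restricted masses $\mu_b((-\infty,r])$ are uniformly tight real random variables \emph{and} there is no escape of mass to $-\infty$ in a suitable sense --- but because each $\mu_b$ lives on $\R$ (no mass at $-\infty$ is possible) and assigns finite mass to each half-line $(-\infty,r]$, it suffices to control $Q^b(t,(-\infty,r])$ for each fixed $r$. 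This is exactly what Lemma~\ref{lem:upperboundforthedensityofQ} provides: the uniform bound $\sup_{b \le 1}\Pb[\sup_{s \le t} Q^b(s,(-\infty,r)) \ge x] \le c_t(r) x^{-2}$ shows that for each $r$ the family $\{Q^b(t,(-\infty,r)): b \le 1\}$ is tight in $\R_+$. Combined with the observation that the ``right tail'' is automatically harmless in the $\cC_\star$-topology (test functions are eventually zero, so mass far to the right is never seen), a diagonal argument over $r \in \N$ yields uniform tightness of $\{Q^b(t,\cdot): b \le 1\}$ in $(M_\star(\R),d_\star)$.

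For the process-level statement I would invoke a Prohorov-type / Aldous-type tightness criterion adapted to $\frak{C}$, the space of continuous paths into $(M_\star(\R),d_\star)$ with the topology of uniform convergence on compacts. By the structure of $d_\star$ as a weighted sum over $r \in \N$ of sup-over-test-functions pseudometrics, and since a sum $\sum_r 2^{-r}(\cdots)$ of uniformly tight coordinates is uniformly tight, it is enough to prove, for each fixed $r \in \N$ and each $T<\infty$, that the real-valued processes $t \mapsto Q^b(t,f)$ are uniformly tight in $C([0,T],\R)$, uniformly over $f$ with $\|f\|_{\mathrm{BL}} \le 1$ and $\supp(f) \subset (-\infty,r]$. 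For this I would: (i) use the fixed-time tightness just established (applied at $t=0$, say, or at a countable dense set of times) to get compact containment; and (ii) establish an equicontinuity / oscillation estimate of the form
\begin{equation}\label{eq:osc-goal}
\lim_{\delta \to 0}\ \limsup_{b \to 0}\ \Pb\Big[\sup_{\substack{|s-t|\le \delta\\ s,t \in [0,T]}} |Q^b(t,f) - Q^b(s,f)| \ge \eta \Big] = 0\,,
\end{equation}
for every $\eta>0$ and every such $f$. The natural route to \eqref{eq:osc-goal} is to apply Ito's formula to $Q^b(t,f) = b\sum_i f(X^b_i(t))$: writing $X^b_i(t) = bX_i(t/b^2)$ and using $f \in C^2_c$ (one first handles general bounded-Lipschitz $f$ by mollification, since $d_\star$-balls are controlled by $\cC_\star$ with a dense $C^2_c$ subfamily), the increment $Q^b(t,f) - Q^b(s,f)$ decomposes into a bounded-variation ``drift + Laplacian'' part of the form $\int_s^t b\sum_i [\tfrac12 f''(X^b_i) + \text{(drift terms supported near the leftmost particle)}]\,du$ and a martingale part with quadratic variation $\int_s^t b^2\sum_i (f'(X^b_i))^2\,du$. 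Each of these is controlled on the event that the number of particles in $\supp(f) \subset (-\infty,r]$ stays bounded --- and that event has probability close to $1$, uniformly in $b \le 1$, precisely by Lemma~\ref{lem:upperboundforthedensityofQ}. The drift localized at the leftmost particle contributes $O(b)\cdot \|f'\|_\infty \cdot (t-s)$, which vanishes; the Laplacian term is $O((t-s))$ times the (tight) particle count in a neighborhood of $\supp(f)$; and the martingale term has quadratic variation $O(b^2)\cdot(t-s)\cdot(\text{particle count})$, hence by Doob's inequality contributes $O(b)$-size fluctuations. Feeding these estimates into \eqref{eq:osc-goal} gives the required modulus of continuity.

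I expect the main obstacle to be the bookkeeping in step (ii): justifying the application of Ito's formula to the infinite sum $b\sum_i f(X^b_i(t))$ (interchanging sum and differential, handling the local-time/collision terms at rank-collisions of the Atlas dynamics, and controlling the contribution of the infinitely many particles outside $\supp(f)$ that can nonetheless diffuse into it over time $[s,t]$). The key input that makes this tractable is the uniform second-moment control of $Q^b(t,(-\infty,r))$ from Lemma~\ref{lem:upperboundforthedensityofQ} together with the a priori Gaussian fluctuation bounds for i.i.d.\ Brownian motions, which together show that only $O_{\Pb}(1)$ particles (uniformly in $b \le 1$) ever enter a fixed window during $[0,T]$, so the ``infinite'' sum is effectively finite on a high-probability event and all the above estimates are legitimate there; off that event, uniform smallness of the probability (order $x^{-2}$) absorbs the remainder. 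Finally, a standard diagonalization over $r \in \N$ and over a countable dense family of $C^2_c$ test functions, followed by the reduction of general bounded-Lipschitz $f$ to this family via mollification, upgrades the coordinatewise tightness to uniform tightness of $\{t \mapsto Q^b(t,\cdot): b \le 1\}$ in $(\frak{C},\frak{d})$.
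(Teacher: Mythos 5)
The fixed-time part of your argument is essentially the paper's: both reduce, via the projective-limit structure of $(M_\star(\R),d_\star)$, to the second-moment bound of Lemma~\ref{lem:upperboundforthedensityofQ} on $Q^b(t,(-\infty,r])$, plus the observation that the topology never sees mass far to the right.

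For the process-level equicontinuity, however, you have taken a genuinely different route, and it has a real gap. You plan to prove an oscillation estimate \eqref{eq:osc-goal} \emph{pointwise} in a $C^2_c$ test function $f$, via It\^o's formula, and then upgrade to $d_\star$-equicontinuity by mollification and diagonalization. The problem is that $d_\star(Q^b(t,\cdot),Q^b(s,\cdot))$ involves a supremum over \emph{all} $f$ with $\|f\|_{\text{BL}}\le 1$ and $\supp f\subset(-\infty,r]$, and your It\^o bounds are not uniform over this class. Concretely: (i) mollifying a generic Lipschitz $f$ to $f_\epsilon\in C^2_c$ makes $\|f_\epsilon''\|_\infty = O(1/\epsilon)$, so the Laplacian term $\frac{b}{2}\int_s^t\sum_i f_\epsilon''(X^b_i)\,du$ blows up as $\epsilon\downarrow 0$ and must be delicately traded off against the $\epsilon\cdot(\text{particle count})$ error from $\|f-f_\epsilon\|_\infty\le\epsilon$; (ii) the martingale part $b\sum_i\int_s^t f_\epsilon'(X^b_i)\,dW^b_i$ is controlled by Doob only for a \emph{fixed} $f_\epsilon$, and upgrading to a supremum over the (uncountable) test-function family needs a chaining or entropy argument that is not in your sketch. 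Neither obstacle is fatal, but both require substantial extra work that you have waved through.

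The paper avoids It\^o's formula entirely here. It bounds $|Q^b(t,f)-Q^b(s,f)|$ \emph{directly} from the Lipschitz property of $f$: since $|f(x)-f(y)|\le |x-y|$ and $\supp f\subset(-\infty,r]$, one has
\[
|Q^b(t,f)-Q^b(s,f)|\le b\sum_{i\ge 1}|X^b_i(t)-X^b_i(s)|\,\1_{\{X^b_i(t)\wedge X^b_i(s)\le r\}}\,,
\]
which is automatically uniform over all such $f$ at once (this is the step you are missing). It then decomposes each increment $X^b_i(t)-X^b_i(s)$ into a driftless Brownian part $\bar X^b_i(t)-\bar X^b_i(s)$ and the accumulated drift $\Delta^b_i(s,t)\ge 0$, and exploits the exact conservation identity \eqref{eq:drift-iden}, $b\sum_i\Delta^b_i(s,t)=t-s$, so that the total drift contribution to $d_\star(Q^b(t,\cdot),Q^b(s,\cdot))$ is at most $2\delta$, deterministically. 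The Brownian contribution is handled by the modulus of continuity of Brownian motion together with the exponentially weighted summability $\xi_b=1+\lambda/(b\ln 2)$ coming from the Poisson initial data; see \eqref{eq:b-bound}--\eqref{eq:bm-mod-cont}. This is shorter, gives the uniformity over $f$ for free, and isolates the one non-trivial input (conservation of drift) that your It\^o sketch leaves implicit. I would recommend you adopt the paper's direct Lipschitz bound; your It\^o machinery is the right tool for Proposition~\ref{prop:subsequentiallimitssolvestefan} (identifying the limit), not for the tightness step.
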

\begin{proof} Recall \eqref{eq:defofdast} that 
$d_\star$ is a metric for the projective limit
$M_\star(\R)$ of the spaces $M_+((-\infty,r])$ 
of finite, non-negative Borel measures on $(-\infty,r]$, 
each equipped with the corresponding 
weak convergence. Thus, $d_\star$-uniform tightness of 
$M_\star(\R)$-valued random measures  
$\{\mu_b\}$ amounts to  
\begin{align}\label{eq:unif-tight-bd}
\lim_{x \to \infty} \sup_{b \le 1} \Pb ( \mu_b((-\infty,r]) \ge x) &= 0 \,, 
\qquad \forall r < \infty \,, \\
\lim_{r \to -\infty} \sup_{b \le 1} 
\Pb ( \mu_b((-\infty,r]) \ge x) &=0 \,, \qquad \forall x > 0 \,,
\label{eq:unif-tight-inf}
\end{align}
with the bound \eqref{eq:Qb-in-Mast} of 
Lemma \ref{lem:upperboundforthedensityofQ} yielding 
$d_\star$-uniform tightness of the 
collection $\{Q^b(t,\cdot), b \le 1 \}$, for each fixed $t \ge 0$.
Further, $(M_\star(\R),d_\star)$ is a Polish space (a direct consequence
of \cite[Lemma A.1]{dza}). Hence, as in \cite[Lemma A.2]{dza}
(building upon \cite[Thm. 7.3]{billingsley2013convergence}),
the stated $\frak{d}$-uniform 
tightness of $t \mapsto Q^b(t,\cdot)$ in $\frak{C}$
follows from the equi-continuity estimate
\begin{equation}\label{eq:Qb-equi-cont}
\lim_{\delta\to 0} \sup_{b\leq 1} \; \Pb\, [ \; w_{\delta,T}(Q^b)\geq\rho\; ] =0\,,
\qquad \forall \rho>0, \quad \forall T < \infty \,,
\end{equation}
where for any $\mu: \R_+ \mapsto M_\star(\R)$, 
\begin{equation}\label{dfn:mod-cont}
w_{\delta,T}(\mu):=\sup_{|t-s|\leq \delta, s \le t \leq T} 
d_\star(\mu(t,\cdot),\mu(s,\cdot)).
\end{equation}
As for \eqref{eq:Qb-equi-cont}, note that if $\|f\|_{\textrm{Lip}}\leq 1$ and
$\textrm{support}(f)\subset(-\infty,r]$, then by \eqref{eq:qb-def},
\begin{align*}
\vert\langle f,Q^b(t,\cdot)\rangle-\langle f,Q^b(s,\cdot) \rangle\vert
 &\leq
 b\sum_{i\ge 1}\vert f(X_i^b(t))-f(X_i^b(s)) \vert \nonumber \\
&\leq
b \sum_{i\ge 1}
\vert X^b_i(t)-X_i^b(s)\vert  
\1_{\{X^b_i(t)\wedge  X^b_i(s)\leq r\}}
\end{align*}
and with
$\sum_{r \ge (y \vee 1)} 2^{-r}  \le  2^{1-(y)_+}$,
we consequently get from \eqref{eq:defofdast} that for any $t,s,b$,
\begin{equation}\label{eq:terra}
d_\star(Q^b(t,\cdot),Q^b(s,\cdot)) \le 2 b \sum_{i \ge 1} 
2^{- (\min_{u \le T} \{X^b_i(u)\})_+ } \, \vert X^b_i(t) - X^b_i(s) \vert \,.
\end{equation}
We decompose the increments of $X_i(\cdot)$, and 
correspondingly of $X^b_i(\cdot)$, as the sum 
\[
X_i(t) -X_i(s) = \bar X_i(t) - \bar X_i(s) + \Delta_i(s,t) \,, \quad 
\Delta_i(s,t) = \int_s^t \1_{\{X_i(u) = X_{(1)}(u)\}} \, du \,, 
\] 
for the semi-infinite Harris system $\{\bar X_i, i \ge 1\}$ of independent 
Brownian particles that we have introduced in Lemma \ref{lem:upperboundforthedensityofQ},
and the overall translation $\Delta_i(s,t)$ to the right due to the unit  
drift applied during $[s,t]$ whenever $X_i(\cdot)$ assumes
the left-most position. In particular, $\bar X_i^b(\cdot) \le X_i^b(\cdot)$ 
for any $i,b$ and $\Delta^b_i(s,t):= b \Delta_i(s b^{-2},t b^{-2}) \ge 0$ 
are such that 
\begin{equation}\label{eq:drift-iden}
b \sum_{i \ge 1} \Delta^b_i(s,t) = t-s \,, \quad \forall b > 0, \quad \forall
s < t \,,
\end{equation}
which together with \eqref{dfn:mod-cont} and \eqref{eq:terra} imply that 
\[
w_{\delta,T}(Q^b) \le 2 \delta + 
2 b \sum_{i \ge 1} 
2^{-\min_{u \le T} \{ \bar X^b_i(u)\} } \, \bar w_{\delta,T} (\bar X^b_i) \,,
\]
where for any $f : \R_+ \mapsto \R$, 
\[
\bar w_{\delta,T}(f) := \sup_{|t-s| \le \delta, s \le t \le T} \,
\vert f(t) - f(s) \vert \,.
\]
Thus, by Markov's inequality we get \eqref{eq:Qb-equi-cont} upon showing that 
\begin{equation}\label{eq:item2above}
\lim_{\delta\to 0} \sup_{b\le 1} \;
b \E \Big[ \sum_{i \ge 1} 
2^{-\min_{u \le T} \{\bar X^b_i(u) \}} \, \bar w_{\delta,T} (\bar X^b_i) 
\Big] = 0 \,, \qquad \forall T < \infty \,.
\end{equation}
Recall that $\{\bar X^b_i(0)\}$ are 
drawn from a Poisson($\lambda/b$) point process 
on $\R_+$ with an extra point at the origin, independently of 
the i.i.d. standard Brownian motions
$\{\bar X^b_i(\cdot)-\bar X^b_i(0)\}$, $i \ge 1$. Using this 
representation, \eqref{eq:item2above} is equivalent to 
\begin{align}\label{eq:b-bound}
& \sup_{b\le 1} \; \{b \xi_b\} < \infty \,, \qquad \qquad \qquad 
\xi_b := \E \big[ \sum_{i \ge 1}  2^{-\bar X^b_i(0)} \big] \,, \\
& \lim_{\delta\to 0} \E \Big[ 2^{\max_{u \le T} \{-W(u)\} } \, 
\bar w_{\delta,T} (W) 
\Big] = 0 \,, \qquad \forall T < \infty \,.
\label{eq:bm-mod-cont}
\end{align}
We next observe that $\xi_b = 1 + \xi_b \E [2^{-b Z}]$ for 
$Z \sim $ Exponential($\lambda$), yielding that 
$\xi_b = 1 + \lambda/(b \ln 2)$ for which 
\eqref{eq:b-bound} holds. Further, as $\E [ \bar w_{\delta,T} (W)^2 ] \to 0$ 
when $\delta \to 0$ and even $\max_{u \in [0,T]} |W(u)|$ has finite \abbr{mgf},
yielding by Cauchy-Schwarz that \eqref{eq:bm-mod-cont} holds and thereby 
completing the proof of the proposition.
\end{proof}

The following result about regularity of $t \mapsto Y_1^b(t)$ will be needed to prove the tightness of the family $\{Y^{b}_1, b\geq 1 \}$.
\begin{lemma}\label{lem:mesoscopicregularity}
For any $R \in (0,1]$ and $\delta>0$, there exist constants $c>0$ and 
$C<\infty$ such that for all $t \ge 0$, $b \le 1$,
\[\Pb\left[
\sup_{s\in[0,b^R]}|Y_1^b(t+s)-Y_1^b(t)|\geq\delta\right]\leq C \exp(-cb^{-R}) \,.
\]
\end{lemma}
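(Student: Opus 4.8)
The plan is to bound the displacement of the re-scaled left-most particle $Y_1^b$ over a short time window $[t,t+b^R]$ separately from above and from below, using the decomposition of $X_i$ into the Harris (driftless) part $\bar X_i$ plus the non-negative drift correction $\Delta_i$ already introduced in the proof of Proposition~\ref{prop:tightnessofQ}. First I would rescale back to the original time variable: setting $\tau = b^{-2}t$ and $h = b^{-2}b^R = b^{R-2}$, the event in question is $\{\sup_{u \le h}|Y_1(\tau+u)-Y_1(\tau)| \ge \delta/b\}$, and since $R \le 1$ we have $h \to \infty$ while $\delta/b \to \infty$, with $\delta/b = \delta \cdot b^{-1}$ growing like $b^{-1}$ versus $h$ growing like $b^{R-2}$; the ratio $(\delta/b)^2/h = \delta^2 b^{-2}/b^{R-2} = \delta^2 b^{-R}$ is exactly the rate $cb^{-R}$ we are after, so the right scaling is a diffusive bound $\Pb(\sup|Y_1| \ge a) \le \exp(-c a^2/h)$ with $a = \delta/b$.

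For the \emph{upper} bound on $Y_1(\tau+u) - Y_1(\tau)$: during $[\tau,\tau+u]$ the left-most particle's position $X_{(1)}$ increases only via the unit drift (which acts for total time at most $u$) and via the driving noise of whichever particle is currently leftmost. A clean way to control this is to note that $Y_1(\tau+u) \le Y_1(\tau) + u + \sup_{i} \bar{w}_{u}(\bar X_i)$ restricted to the finitely many particles that can reach the left edge; more carefully, since $X_i(v) \ge \bar X_i(v)$ for all $v$ and $Y_1 = \min_i X_i \le \min_i (\bar X_i + \Delta_i)$, and since the total drift $\sum_i \Delta_i(\tau,\tau+u) = u$, one gets $Y_1(\tau+u) - Y_1(\tau) \le u + \max_{i: \bar X_i(\tau) \le Y_1(\tau)+2u} \bar w_{u,\tau+u}(\bar X_i)$. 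The number of such ``relevant'' Harris particles is Poisson-controlled (with mean growing polynomially in $b^{-1}$ and $u$), and each Brownian oscillation $\bar w_{u}(\bar X_i)$ over a window of length $h$ has Gaussian tails, so a union bound over relevant indices gives a bound of the form $C\exp(-c b^{-R})$ once $a/h$ dominates $\sqrt{\log(\text{number of particles})/h}$, which holds since $a = \delta/b$ is polynomially large. For the \emph{lower} bound, $Y_1$ can decrease only through the noise: comparing with the Harris system, $Y_1(\tau+u) \ge \bar Y_1(\tau+u)$ is the wrong direction, so instead I would use that $X_{(1)}(\tau+u) - X_{(1)}(\tau) \ge \min_{v \in [\tau,\tau+u]} (W_{I(v)}(v) - W_{I(v)}(\tau))$ type estimate — more robustly, the left-most particle at time $\tau+u$ is some named particle $j$ with $X_j(\tau+u) = Y_1(\tau+u)$, and $X_j(\tau+u) \ge X_j(\tau) - \bar w_u(\bar X_j) \ge Y_1(\tau) - \bar w_u(\bar X_j)$ (dropping the non-negative $\Delta_j$ term and using $X_j(\tau) \ge Y_1(\tau)$), where again $j$ ranges over the relevant nearby particles; this is handled by the same union-bound-plus-Gaussian-tail argument.

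The main obstacle is controlling the \emph{identity and number of particles that can influence the left edge} over the window $[\tau,\tau+h]$: unlike a finite system, infinitely many particles could in principle come into play, so one must show that only those starting within $O(\sqrt{h \log b^{-1}}) = O(b^{-R/2}\sqrt{\log b^{-1}})$ of the left edge matter, with the probability that a Harris particle travels further than that being super-polynomially small after the union bound. Concretely, I would split on the event $A$ that every Harris particle $\bar X_i$ with $\bar X_i(\tau) > Y_1(\tau) + L$ (for suitable $L \asymp b^{-R/2+\epsilon}$) stays above $Y_1(\tau) + \delta/(2b)$ throughout $[\tau,\tau+h]$; on $A^c$ the bound follows from the reflection principle and Poisson tail for $N_+$, while on $A$ only the $O(\text{poly}(b^{-1}))$ particles within distance $L$ are relevant, and the union bound over these, each contributing a Gaussian tail $\exp(-c(\delta/b)^2/h) = \exp(-c'\delta^2 b^{-R})$, closes the estimate. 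The $2\delta$-type slack and the exact constants are routine once this structure is in place; the constants $c,C$ depend on $R,\delta,\lambda$ but not on $t$ or $b \le 1$, as required, because after rescaling everything is driven by stationary-increment Brownian motions and the law of $N_+$ is unchanged.
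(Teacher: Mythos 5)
Your scaling analysis is correct, and your treatment of the negative increments (the leftmost particle can only decrease through the Brownian noise, so compare with the driftless Harris system and use the reflection principle plus a Poisson bound) matches the paper's Step~I in spirit. However, your argument for the \emph{positive} increments has a genuine gap that would prevent the proof from closing.

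You bound $Y_1(\tau+u)-Y_1(\tau)\le u+\text{osc}$ by noting that the drift acts for total time at most $u$. But after rescaling, $u$ ranges up to $b^{R-2}$, so the contribution $b\cdot u$ to $Y_1^b(t+s)-Y_1^b(t)$ is up to $b^{R-1}$, which is $\ge 1$ for all $R\in(0,1]$ and diverges for $R<1$. Thus the drift term alone already overwhelms any fixed $\delta<1$, and your bound is vacuous. The total-drift identity $\sum_i\Delta_i(\tau,\tau+u)=u$ does not help in the way you invoke it: picking the particle that is currently leftmost and using $\Delta_{j^*}\le u$ throws the budget structure away. The paper's Step~II avoids this through a pigeonhole/drift-budget argument: it identifies $\sim 1/b$ particles (those starting within distance $\delta$ of the left edge) whose Brownian oscillations stay below $2\delta$ with overwhelming probability, and then observes that the event $\{\text{all particles exceed }3\delta\}$ forces each such particle to receive drift at least $\delta$, contradicting the total budget $b^{R-1}\ll \delta\cdot(1/b)$. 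Your proposal would need to be restructured along these lines; the bound ``drift on the tracked particle $\le u$'' cannot be repaired.

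A second, smaller issue: the uniformity in $t$ is not automatic from ``stationary-increment Brownian motions.'' The Atlas configuration at time $\tau=t/b^2$ is not a \abbr{ppp}, and one cannot simply restart with a Poisson comparison without justification. The paper uses Proposition~\ref{thm:mon-gaps} to dominate the time-$t$ gaps between $\rho_{\lambda_+}$ and $\rho_{\lambda_-}$, and then \cite[Cor.~3.10(i)]{sar2} (stochastic monotonicity of ranked positions in the initial data) to replace the actual configuration at time $\tau$ by a fresh \abbr{ppp}$_+(\lambda_\pm)$. Your sketch needs to spell this out; it is precisely the mechanism that makes the constants independent of $t$.
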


\begin{proof}
%
\emph{Step $I$:} Control of the negative increments.\\
We will show that for each $\delta,R>0$ there exist $c>0$ and $C$ finite 
such that
  \eq\label{eq:negativeincrements}
  \Pb\left[\inf_{s\in[0,b^R]} Y_1^b(t+s) - 
  Y_1^b(t) \leq -2\delta \right]\leq C(\exp(-cb^{-1})+\exp(-cb^{-R})).
  \en
  From Proposition \ref{thm:mon-gaps} we have that, for any fixed time $t$,
  \begin{equation}\label{eq:188}
  (Y_{i+1}(t)-Y_1(t))_{i\in\N}\succeq (x^*_i)_{i\in\N},
  \end{equation}
   where the $(x^*_i)_{i\in\N}$ are the ranked marks of a 
   \abbr{PPP}$_+(\lambda_+)$, where $\lambda_+ := 2\vee \lambda$.
  That is, the positions of the particles seen from the left are stochastically dominated by those of a Poisson process of intensity $\lambda_+ \1_{[0,\infty)}$. 
The position of the leftmost particle is (stochastically) monotone with respect to the initial position of the particles (see \cite[Corollary 3.10(i)]{sar2}). Therefore, we conclude from \eqref{eq:188} that, for all $t\geq0$,
  \begin{equation}\label{eq:shiche}
 \inf_{s\in[0,b^R]}Y^b_1(t+s)-Y^b_1(t)\succeq \inf_{s\in[0,b^R]} \bar{Y}_1^b(s)-\bar{Y}_1^b(0),
  \end{equation}
  where 
  $\bar{Y}^b_i(t):=b\bar{Y}_i(tb^{-2})$ for 
  a ranked Atlas system
  $(\bar{Y}_i(t))_{i\in\N,t\geq0}$ started 
  at \abbr{PPP}$_+(\lambda_+)$. 
  Furthermore, let $(\widetilde{Y}_i(t))_{i\in\N,t\geq0}$ denote a ranked, one-sided Harris system started from a 
  \abbr{PPP}$_+(\lambda_+)$
  and $\widetilde{Y}_i^b(t):=b\widetilde{Y}_i(tb^{-2})$.
  Since the Harris system does not exert any drift to the particles we have that
  \begin{equation}\label{eq:chiche}
  \inf_{s\in[0,b^R]}\widetilde{Y}^b_1(s)-\widetilde{Y}^b_1(0)\preceq \inf_{s\in[0,b^R]} \bar{Y}^b_1(s)-\bar{Y}^b_1(0).
  \end{equation}
From the stochastic monotone relations 
  \eqref{eq:shiche} and \eqref{eq:chiche}, we deduce that for any 
  $\delta,R,b,t$,
 \begin{align}
  \Pb\Big[\inf_{s\in[0,b^R]} Y_1^b(t+s)-Y_1^b(t) &\leq-2\delta\Big]
   \leq \Pb\left[\inf_{s\in[0,b^R]} \widetilde{Y}_1^b(s)-\widetilde{Y}_1^b(0)  \leq -2\delta\right] \nonumber \\
  \label{eq:panal}
&\le  \Pb\left[\widetilde{Y}^b_1(0)\geq \delta\right]+\Pb\left[\inf_{s\in [0,b^R]}\widetilde{Y}^b_1(s)\leq- \delta\right] \,.
 \end{align}
    Since $\widetilde{Y}^b_1(0)$ has the Exponential($\lambda_+ b^{-1}$)
    distribution, we have for $c=\lambda_+ \delta$,  
    \begin{equation}\label{eq:panal2}
    \Pb\left[ \widetilde{Y}^b_1(0)\geq \delta\right]
    = 
    \exp{(-cb^{-1})} \,.
    \end{equation}
   Proceeding to bound the second term on the right side of \eqref{eq:panal},
   since the Harris system has independent trajectories starting at 
   Poisson  $(\widetilde{Y}_i(0))_{i\in\N}$,  we get that 
   \[
   \mathcal{N}:=\#\left\{i:\inf_{s\in[0,b^R]}\widetilde{Y}^b_i(s)\leq -\delta\right\}
   \]
   is a Poisson random variable of mean
   \begin{equation}\label{eq:defofm}
   M:= \E[\mathcal{N}]= \lambda_+ b^{-1}
   \int_0^\infty \Pb\left[x + \inf_{s\in[0,b^R]} B(s)\leq -\delta  \right] dx\,,
   \end{equation}
   for a standard Brownian motion $B(s)$.
   Setting $y=(x+\delta) b^{-R/2} \ge \delta b^{-R/2}$ we have
  by the reflection principle and classical tail bounds for a standard normal
  $G$ that  
  \begin{equation}\label{eq:ref-princ}
  \Pb\left[x + \inf_{s\in[0,b^R]}B(s)\leq - \delta\right]=2
  \Pb\left[G \geq y \right] \le 
  \frac{2}{\sqrt{2 \pi} y} e^{-y^2/2} \,.
  \end{equation}
   Plugging this into \eqref{eq:defofm}, we get for $c=\delta^2/4$, 
   some $C(R,\delta,\lambda)$ finite and all $b \le 1$, 
   \begin{equation}\label{eq:panal3}
   M\leq \lambda_+ b^{-1} \frac{2 b^R}{\delta \sqrt{2\pi}}
   \int_{\delta b^{-R/2}}^{\infty} e^{-y^2/2} \, dy 
   \le    
    \frac{2 \lambda_+}{\delta^2} b^{3R/2-1} e^{-2 c b^{-R}}
    \leq C e^{-cb^{-R}} \,.
   \end{equation}
   Now, by Markov's inequality,  
   \begin{equation}\label{eq:panal4}
   \Pb\left[\inf_{s\in [0,b^R]}\widetilde{Y}_1^b(s)\leq- \delta\right]=\Pb[\mathcal{N} \ge 1] \leq M \,.
   \end{equation}
   Combining  \eqref{eq:panal}, \eqref{eq:panal2}, \eqref{eq:panal3} and \eqref{eq:panal4} yield \eqref{eq:negativeincrements}.
\newline   
   \emph{Step $II$:} Control of the positive increments.\\
   We will show that for each $\delta,R>0$ there exist $c>0$ and $C$ finite, such that 
   \eq\label{eq:positiveincrements}
   \Pb\Big[\sup_{s\in [0,b^R]} Y_1^b(t+s)-Y_1^b(t)\geq 3\delta\Big]\leq C(\exp(-cb^{-1})+\exp(-cb^{-R}))\,.
   \en
     To this end recall Proposition \ref{thm:mon-gaps} that for any fixed $t \ge 0$,
  \begin{equation}\label{eq:189}
  (Y_{i+1}(t)-Y_1(t))_{i\in\N}\preceq (x^*_i)_{i\in\N},
  \end{equation}
  where the $(x^*_i)_{i\in\N}$ are the ranked marks of a 
  \abbr{PPP}$_+(\lambda_-)$ and $\lambda_-:=2\wedge \lambda$. 
Consequently, by the same reasoning that led to \eqref{eq:shiche}, 
we deduce from \eqref{eq:189} that 
   \[
   \Pb\Big[\sup_{s\in [0,b^R]} Y_1^b(t+s)-Y_1^b(t)\geq 3\delta\Big]\leq \Pb
   \Big[\sup_{s\in [0,b^R]} \hat{Y}_1^b(s)-\hat{Y}_1^b(0)\geq 3\delta\Big],
   \]
   for
   $\hat{Y}^b_i(t):=b\hat{Y}_i(tb^{-2})$ and 
   the ranked Atlas system 
   $(\hat{Y}_i(t))_{i\in\N, t\geq0}$ started at  
\abbr{PPP}$_+(\lambda_-)$.    
    Further $\hat{Y}^b_1(0) \ge 0$, 
   hence   
   \begin{equation}\label{eq:rey}
   \Pb\Big[\sup_{s\in [0,b^R]} \hat{Y}_1^b(s)-\hat{Y}_1^b(0)\geq 3\delta\Big]
   \leq 
   \Pb\Big[ \sup_{s\in[0,b^R]} \hat{X}^b_i(s)\geq 3\delta, \quad \forall i\in\N\Big],
    \end{equation}
    where $(\hat{X}^b_i(t))_{i\in\N,t\geq0}$ denotes the named (non-ranked) version of $\hat{Y}^b$. 
    Therefore, it suffices for \eqref{eq:positiveincrements} to provide such
    a bound on the right side of \eqref{eq:rey}. 
   To this end, consider the decomposition
  \[
  \hat{X}^b_i(t)=\hat{W}^b_i(t)+\Delta^b_i(0,t),
  \]
  where $\hat{W}^b_i(t):=b\hat{W}_i(tb^{-2})$ for the Brownian motion 
  $\hat{W}_i$ which drives $\hat{X}_i$ as in \eqref{mainSDE} and 
  \eqref{eq:drift-iden} holds   
  for the overall translation $\Delta^b_i(s,t)$ to the right  
  due to the drift applied during $[s,t]$ whenever $\hat{X}^b_i(s)$ 
  assumes the left-most position. Let 
 \eq\label{eq:defofI}
\mathcal{A}_b:=\Big\{ \#I_b\leq m_b \Big\}\,,
\qquad 
 I_b:=\Big\{i\in\N: \sup_{s\in[0,b^R]}\hat{W}_i^b(s)\leq 2\delta\Big\}\,,
 \en
 for $m_b := \lceil \delta \lambda_-/(2b) \rceil$.
 To bound the right side of \eqref{eq:rey} we first show that
\eq\label{eq:claim0}
 \Pb\left[\mathcal{A}_b\right]\leq C(\exp(-cb^{-1})+\exp(-cb^{-R}))\,,
 \en
and then prove that for all $b$ small enough
  \eq\label{eq:claim}
 \Big\{ \sup_{s\in[0,b^R]} \hat{X}^b_i(s)\geq 3\delta, \quad \forall i\in\N\Big\} \subseteq \mathcal{A}_b \,.
 \en
 Turning to \eqref{eq:claim0}, \abbr{wlog} we
order the particles of the named system $\hat{X}_i$ according to their 
initial position and consider the events
 \[
\mathcal{B}_b:=\Big\{ \hat{W}^b_{m_b}(0) > \delta \Big\} 
\,,
 \quad
 \mathcal{C}_b(i):=\left[\sup_{s\in[0,b^{R}]} \hat{W}_i^b(s)-\hat{W}_i^b(0)\geq \delta\right]\,.
 \]
Then, in the event 
$\mathcal{B}_b^c \cap_{i \le m_b}\mathcal{C}_b(i)^c$ 
the first $m_b$ particles have their maxima to the left of $2\delta$. Therefore,
 \[
 \mathcal{B}_b^c \bigcap_{i \le m_b}
 \mathcal{C}_b(i)^c \subset\mathcal{A}_b^c 
 \]
and with $\mathcal{C}_b(i)$ identically distributed, we get
by the union bound that 
 \eq\label{eq:sidious}
 \Pb[\mathcal{A}_b]\leq \Pb[\mathcal{B}_b]+ m_b \Pb[\mathcal{C}_b(1)] \,.
 \en
We proceed to bound the terms on the right 
by expressions as in 
 \eqref{eq:claim0}. Specifically, 
$\mathcal{B}_b$ occurs only if the Poisson 
variable $\#\{i:\hat{W}^b_i(0)\in[0,\delta]\}$ 
of mean $\lambda_b := \delta \lambda_-/b$ is 
at most $m_b-1 \le \lambda_b/2$. Hence by standard tail 
estimates for the Poisson$(\lambda_b)$ law, 
  \eq\label{eq:luke1}
 \Pb\left[\mathcal{B}_b\right]\leq C\exp(-cb^{-1}) \,,
  \en
  for some $c>0$, $C<\infty$ and all $b \le 1$.
 Further, similarly to \eqref{eq:ref-princ} we have that
  \eq\label{eq:luke2}
  \begin{aligned}
   \Pb\left[\mathcal{C}_b(1)\right] = \Pb\Big[\sup_{s\in[0,b^R]} B(s) \geq \delta\Big]
   =2\Pb\Big[ G \geq \delta b^{-R/2} \Big] \leq C m_b^{-1} e^{-cb^{-R}}\,,
   \end{aligned}
    \en
    for somce $c>0$, finite $C$ and all $b \le 1$, with \eqref{eq:claim0} following from \eqref{eq:sidious}, \eqref{eq:luke1} and \eqref{eq:luke2}.
To address the claim \eqref{eq:claim} note that
 \[
 \min_{i\in\N} \sup_{s\in[0,b^R]} 
  \hat{X}_i^b(s) 
  \leq\min_{i\in\N}\Big\{\Delta^b_i(0,b^{R})+\sup_{s\in[0,b^R]} \hat{W}_i^b(s)\Big\}\,,
 \]
hence on the event in the left side of \eqref{eq:claim} we have 
that $\Delta^b_i(0,b^R) \geq \delta$ whenever $i \in I_b$ 
of \eqref{eq:defofI}. This in turn implies by \eqref{eq:drift-iden}
that 
\[
\#I_b \le 
\delta^{-1} \sum_{i\in I_b} \Delta^b_i(0,b^R) \le 
\delta^{-1} \sum_{i\ge 1} \Delta^b_i(0,b^R)
= \delta^{-1} b^{R-1} \le m_b
\]
for all $b \le b_0(\delta,R,\lambda)$, as in the event $\mathcal{A}_b$.
To complete the proof note that \eqref{eq:positiveincrements} follows 
from \eqref{eq:rey}, \eqref{eq:claim0} and \eqref{eq:claim}.
\end{proof}

From \eqref{eq:defofdast},
if $d_\star(\mu_k,\mu) \to 0$ then for any $a_- \in [-\infty,a_+]$ and $a_+$ finite
\begin{align}\label{eq:rere}
\mu([a_-,a_+]) \geq
\limsup_{k\to\infty} \mu_k([a_-,a_+]) \geq
\liminf_{k\to\infty} \mu_k((a_-,a_+)) \geq \mu((a_-,a_+)) \,. 
\end{align}
Consequently, for any $q \ge 0$, the quantile mapping 
\begin{equation}\label{def:y-quant}
\mu^{-1}(q) := \inf\{ r : \mu((-\infty,r]) > q \} : 
M_\star(\mathbb{R}) \mapsto [-\infty,\infty] \,,
\end{equation}
is upper semi-continuous with respect to the $d_\star$-metric.
Using hereafter $y_\mu:=\mu^{-1}(0)$, we next 
deal with regularity in $x$ of limit points $Q^0$ of the 
uniformly tight collection $Q^b$, when $b \to 0$.
\begin{proposition}\label{prop-limiting-rdn}
For any $(\frak{C},\frak{d})$-limit point in distribution 
$Q^0$ of $Q^b$ as $b \to 0$, almost surely, 
the measure $Q^0(t,\cdot)$ is
absolutely continuous with respect to the
Lebesgue measure on $\R$, at each $t \in \R_+$, with a Radon-Nikodym
derivative $U^0(t,x)$ which for $x>y_{Q^0(t)}$ 
takes
values in $[\lambda_-,\lambda_+]$ (for 
$\lambda_-:=2 \wedge \lambda$ and $\lambda_+ :=2 \vee \lambda$).
\end{proposition}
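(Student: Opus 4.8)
\emph{Reduction.} The plan is to sandwich, up to an error that vanishes as $b\to 0$, the rescaled particle count $Q^{b}(t,(a,b'))$ between $\lambda_-(b'-a)$ and $\lambda_+(b'-a)$ for a dense family of endpoints, and then invoke Lebesgue's differentiation theorem. By Skorokhod's representation we may assume $Q^{b_n}(\cdot,\cdot)\to Q^0$ almost surely in $(\frak C,\frak d)$ along some $b_n\downarrow 0$. It then suffices to show that, almost surely, for all rationals $a<b'$: (i) $Q^0(t,(a,b'))\le\lambda_+(b'-a)$ for every $t\ge 0$; and (ii) $Q^0(t,[a,b'])\ge\lambda_-(b'-a)$ for every $t\ge 0$ with $a>y_{Q^0(t)}$. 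Indeed, since every open set is a countable disjoint union of open intervals, (i) forces $Q^0(t,\cdot)\le\lambda_+\,\mathrm{Leb}$ as Borel measures; hence $Q^0(t,\cdot)$ is absolutely continuous with $U^0(t,\cdot)\le\lambda_+$ and has no atoms (in particular none at $y_{Q^0(t)}$); and (ii) together with Lebesgue's differentiation theorem then gives $U^0(t,x)\ge\lambda_-$ for a.e.\ $x>y_{Q^0(t)}$.

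\emph{The estimate at a fixed time.} Fix $t\ge 0$ and rationals $a<b'$, and write $s:=t/b_n^2$, $\ell_n:=Y^{b_n}_1(t)=b_nY_1(s)=y_{Q^{b_n}(t)}$ and $S_k(s):=Y_k(s)-Y_1(s)=\sum_{j<k}Z_j(s)$. Since at time $s$ the named particles lying $\le r/b_n$ are exactly the ranked ones, $Q^{b_n}(t,(-\infty,r])=b_n\,\#\{k:S_k(s)\le (r-\ell_n)/b_n\}$, so $Q^{b_n}(t,[a,b'])$ equals $b_n$ times the number of points of the configuration $\{S_k(s)\}_{k\ge 1}$ (which has an atom at the origin) lying in the window $[(a-\ell_n)/b_n,\,(b'-\ell_n)/b_n]$, of length $(b'-a)/b_n$. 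By Proposition~\ref{thm:mon-gaps}, $\underline Z^{(\lambda_+)}\preceq\underline Z(s)\preceq\underline Z^{(\lambda_-)}$ with i.i.d.\ $\Exp(\lambda_\pm)$ spacings; realizing these comparisons through a coupling — as is done throughout the proof of Lemma~\ref{lem:mesoscopicregularity}, and available from the \abbr{sde}-coupling comparison theorems of \cite{sar2} — the point set $\{S_k(s)\}_k$ is pathwise sandwiched between two Poisson processes on $\BR_+$ of intensities $\lambda_-$ and $\lambda_+$, each carrying an added point at the origin. For such a Poisson process the number of points in a window of length $h$ concentrates at $\lambda_\pm h$ with exponentially small fluctuations; taking $h=(b'-a)/b_n$, multiplying by $b_n$, and applying a union bound over a mesh of width $b_n^2$ makes this uniform over all window positions in $[0,C_n/b_n]$ for $C_n$ at most polylogarithmic in $1/b_n$. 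The actual (random) position $(a-\ell_n)/b_n$ is non-negative exactly on $\{\ell_n\le a\}$, and in any event $\ell_n\ge b_n\bar X_{(1)}(s)$ for the left-most particle $\bar X_{(1)}$ of the semi-infinite Harris system of Lemma~\ref{lem:upperboundforthedensityofQ} (here one uses the monotonicity of the left-most particle in the initial ranked configuration and in the drift, \cite[Cor.\ 3.10, 3.12]{sar2}), whose displacement over time $s$ is $o(s^{1/2+\varepsilon})$ for every $\varepsilon>0$; this keeps the position in the admissible range. We conclude, for this $a<b'$ and $t$, that $Q^{b_n}(t,[a,b'])\le\lambda_+(b'-a)+o_{\Pb}(1)$, and that on $\{\ell_n\le a\}$ also $Q^{b_n}(t,[a,b'])\ge\lambda_-(b'-a)-o_{\Pb}(1)$; as the error probabilities are summable, along a further subsequence these errors are $o(1)$ almost surely.

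\emph{From fixed $t$ to all $t$.} Letting $n\to\infty$ and using \eqref{eq:rere}, $Q^0(t,(a,b'))\le\liminf_n Q^{b_n}(t,(a,b'))\le\lambda_+(b'-a)$ almost surely. For the lower bound, the upper semicontinuity of $\mu\mapsto\mu^{-1}(0)$ recorded after \eqref{eq:rere}, applied to $y_{Q^{b_n}(t)}=\ell_n$, gives $y_{Q^0(t)}\ge\limsup_n\ell_n$; thus on $\{a>y_{Q^0(t)}\}$ one has $\ell_n\le a$ for all large $n$, and \eqref{eq:rere} yields $Q^0(t,[a,b'])\ge\limsup_n Q^{b_n}(t,[a,b'])\ge\lambda_-(b'-a)$. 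Intersecting over the countable set of rational triples $(a,b',t)$, (i) and (ii) hold simultaneously for all rational $t$. To pass to arbitrary $t\ge 0$, use that $t\mapsto Q^0(t,(a,b'))$ is lower semicontinuous and $t\mapsto Q^0(t,[a,b'])$ is upper semicontinuous — both because $t\mapsto Q^0(t,\cdot)$ is $d_\star$-continuous while $\1_{(a,b')}$ and $\1_{[a,b']}$ are monotone limits of functions from $\cC_\star$ — together with the fact that $\{t:y_{Q^0(t)}<a\}=\{t:Q^0(t,(-\infty,a))>0\}$ is open. Lebesgue's differentiation theorem on $(y_{Q^0(t)},\infty)$ then completes the proof.

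\emph{The main obstacle.} The delicate point is the interplay with the left edge: the stochastic bounds of Proposition~\ref{thm:mon-gaps} and of \eqref{eq:188}--\eqref{eq:189} naturally live in the frame centered at the left-most particle, whereas the $d_\star$-topology resolves fixed macroscopic windows; one must therefore transport a \emph{uniform} Poisson law of large numbers across the random shift $\ell_n=Y^{b_n}_1(t)$ and align this shift with the quantile $y_{Q^0(t)}$ of the limit measure. A secondary point is that the upper-orthant orderings of Proposition~\ref{thm:mon-gaps} should be upgraded to a genuine pathwise coupling to control \emph{window} counts, since — unlike counting functions of the form $\#\{k:S_k(s)\le v\}$ — these are not monotone functionals of the configuration.
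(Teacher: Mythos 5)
Your overall architecture --- the spacings domination of Proposition~\ref{thm:mon-gaps}, exponential concentration plus a union bound over polynomially many candidates, reduction to rational $(a,b',t)$ via ($d_\star$- and semi-) continuity, and Lebesgue differentiation at the end --- matches the paper's, and those parts are sound. The gap is in the pivotal transfer step. A monotone coupling realizing $\underline Z^{(\lambda_+)}\preceq\underline Z(s)\preceq\underline Z^{(\lambda_-)}$ gives $S_k^{(\lambda_+)}\le S_k(s)\le S_k^{(\lambda_-)}$ for every $k$, hence a pointwise sandwich of the \emph{counting functions} $N(v):=\#\{k:S_k\le v\}$, namely $N^{(\lambda_-)}(v)\le N(v)\le N^{(\lambda_+)}(v)$; but it does \emph{not} sandwich window counts. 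For a window $[v_1,v_2]$ the best the coupling yields directly is $N(v_2)-N(v_1)\le N^{(\lambda_+)}(v_2)-N^{(\lambda_-)}(v_1)\approx \lambda_+(v_2-v_1)+(\lambda_+-\lambda_-)\,v_1$, and since your window sits at $v_1=(a-\ell_n)/b_n=O(1/b_n)$, the spurious cross term is $O(1/b_n)$ and survives the rescaling by $b_n$ whenever $\lambda\ne 2$. Your ``secondary point'' correctly flags that window counts are not monotone functionals of the configuration, but upgrading the stochastic order to a pathwise coupling does not cure this: even under the coupling, the window counts of the middle configuration are not squeezed between those of the two Poisson processes, so the asserted ``uniform Poisson law of large numbers for window counts'' does not transfer to $\{S_k(s)\}$ as written.

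The correct transfer --- and it is exactly what the paper's proof does --- is combinatorial rather than spatial: ``more than $m$ points of $\{S_k\}$ in a window of length $h$'' is equivalent to ``$\sum_{i=\ell+1}^{\ell+m}Z_i(s)\le h$ for some rank $\ell$'', where $\ell$ is a priori bounded by $O(b_n^{-1})$ thanks to Lemma~\ref{lem:upperboundforthedensityofQ}. Each such event is a decreasing event of the spacings vector, so Proposition~\ref{thm:mon-gaps} applies directly (no coupling needed) to replace $Z_i(s)$ by i.i.d.\ $\Exp(\lambda_+)$ variables, and a union bound over the $O(b_n^{-1})$ ranks together with Cram\'er's bound (equivalently, Poisson tails) kills the probability; the dual statement, on the event $\{a>y_{Q^0(t)}\}$ where the lowest rank in the window is well defined, gives the lower bound with $\lambda_-$. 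If you replace your mesh-over-positions union bound by this union bound over ranks, the remainder of your argument (semicontinuity in $t$, alignment of $\ell_n$ with $y_{Q^0(t)}$ via upper semicontinuity of $\mu\mapsto\mu^{-1}(0)$, and Lebesgue differentiation) goes through and recovers the proposition.
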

\begin{proof}
Recall that $Q^0$ is a sub-sequential limit of the family $\{Q^b, b>0\}$, 
which 
in view of Proposition \ref{prop:tightnessofQ}
is uniformly tight as $(\frak{C},\frak{d})$-valued random variables. 
As such, necessarily $Q^0$ also takes values in $(\frak{C},\frak{d})$ and 
in particular $t\mapsto Q^0(t,\cdot)$ is 
continuous
as a mapping to $(M_\star(\mathbb{R}),d_\star)$. Our claim 
amounts to having with probability one, no 
random $T \ge 0$, $a_-<a_+$ and rational $\eta>0$, such that 
for $c_+ := (1+\eta) \lambda_+ (a_+-a_-)$ and
$c_- := (1-\eta)  \lambda_- (a_+-a_-)$, either 
\begin{equation}\label{eq:contr2}
  Q^0(T,(a_-,a_+)) > c_+
\end{equation}
or
\begin{equation}\label{eq:contr1}
Q^0(T,[a_-,a_+])< c_- \quad \& \quad y_{Q^0(T)} < a_- \,.
\end{equation}
Taking rational $q_\pm \to a_\pm$ such that
$(a_-,a_+) \subseteq (q_-,q_+)$ in case of \eqref{eq:contr2}, while 
$[q_-,q_+] \subseteq [a_-,a_+]$ in case of \eqref{eq:contr1}, \abbr{wlog}
it suffices to consider in both only rational random $a_-<a_+$. 
Considering rational $T_k \to T$, by the $d_\star$-continuity of 
$t \mapsto Q^0(t)$ and \eqref{eq:rere} we deduce
that \abbr{wrt} \eqref{eq:contr2} it suffices to rule out having 
\begin{equation}\label{eq:ratsingtim}
\mathbb{P}[Q^{0}(T,(a_-,a_+))>c_+]>0 \,,
\end{equation}
for some \emph{fixed, non-random, rational $T \ge 0$, $\eta>0$, $a_-<a_+$}. Similarly, 
with $t \mapsto y_{Q^0(t)}$ upper semi-continuous, 
it suffices \abbr{wrt}
\eqref{eq:contr1} to rule out having 
\begin{equation}\label{eq:ratsingtim-lbd}
\mathbb{P}\left[Q^0(T,[a_-,a_+])< c_-, \;\; a_- > y_{Q^0(T)} \right]>0 \,,
\end{equation}
for some \emph{fixed, non-random, rational $T \ge 0$, $\eta>0$, $a_-<a_+$}.  
Proceeding to rule out \eqref{eq:ratsingtim} and \eqref{eq:ratsingtim-lbd},
recall that 
with $Q^0$ a sub-sequential limit in distribution of $\{Q^b, b>0\}$, 
there exist $b_k \to 0$ and a coupling of $\{Q^b, b \ge 0\}$
such that $\frak{d} (Q^{b_k},Q^0) \to 0$ in probability. Fixing 
rational $T$, $\eta$ and $a_-<a_+$, since 
$\{\mu\in M_\star(\mathbb{R}): \mu((a_-,a_+))> c \}$ is $d_\star$-open,
from \eqref{eq:ratsingtim} and having $d_\star(Q^{b_k}(T),Q^{0}(T)) \to 0$
in probability, we get that 
\[
\liminf_{k\to\infty}\mathbb{P}\left[Q^{b_k}(T,(a_-,a_+))>c_+\right]>0 \,.
\]
This, in combination with \eqref{eq:Qb-in-Mast}, yields that
for some $x=x(a_+,T)$ large enough
\begin{equation}\label{eq:contradict2}
\liminf_{k\to\infty}\mathbb{P}\left[Q^{b_k}(T,(a_-,a_+))>c_+,
Q^{b_k}(T,(-\infty,a_+]) < x \right]>0 \,.
\end{equation}
Further,
$\{\mu\in M_\star(\mathbb{R}): \mu([a_-,a_+]) < c \}$ is
$d_\star$-open and $\mu \mapsto y_\mu$ is
upper semi-continuous, hence we similarly 
get from \eqref{eq:ratsingtim-lbd} and \eqref{eq:Qb-in-Mast}
that 
\begin{equation}\label{eq:ratsingtim-lbd2}
\liminf_{k\to\infty}\Pb[Q^{b_k}(T,[a_-,a_+])<c_-,\;\; a_- > y_{Q^{b_k}(T)}\,,
\;\; 
Q^{b_k}(T,(-\infty,a_+]) < x ]>0\,.
\end{equation}
By \eqref{eq:qb-def}, the event in \eqref{eq:contradict2} implies that 
$Y_{\ell+c_k^+}(b_k^{-2} T) - Y_{\ell}(b_k^{-2} T) < (a_+-a_-) b_k^{-1}$
for $c_{k}^+ :=[ c_+ b_k^{-1} ]$ and some  
integer $\ell \in [1, x b_k^{-1})$. Thus, \eqref{eq:contradict2} yields that  
\[
\liminf_{k\to\infty} \sum_{\ell=0}^{[x b_k^{-1}]} 
\Pb\Big[\sum_{i=\ell+1}^{\ell+c_{k}^+} Z_i(b_k^{-2}T) < (a_+-a_-)b_k^{-1}\Big]
>0 \,,
\]
whereas by Proposition \ref{thm:mon-gaps}, these
probabilities increase upon replacing $Z_i(b_k^{-2}T)$ by 
i.i.d. Exponential($\lambda_+$) variables $\{ Z_i^+, i\in\N\}$. 
That is, if \eqref{eq:contradict2} holds, then 
\begin{equation}\label{eq:prepLDP1}
\liminf_{k\to\infty} b_k^{-1} \Pb( N_k^{+} \ge c_k^{+} )  > 0 \,,
\end{equation}
for a Poisson variable $N_k^{+}$ of parameter
$m_k^+ := \E N_k^{+} = \lambda_+ (a_+-a_-) b_k^{-1}$. 
Since $c_k^+ + 1 \ge 
(1+\eta) m_{k}^+ \uparrow \infty$, standard exponential 
tail estimates for $N_k^+$ rule out \eqref{eq:prepLDP1},
in contradiction to \eqref{eq:contradict2} and thereby 
also to \eqref{eq:ratsingtim}. 

By the same reasoning, it follows from \eqref{eq:ratsingtim-lbd2} that 
for $c_k^- := \lceil c_- b_k^{-1} \rceil$ 
\begin{equation}
\liminf_{k\to\infty} \sum_{\ell=1}^{[x b_k^{-1}]} 
\Pb\Big[\sum_{i=\ell+1}^{\ell+c_k^{-}} 
Z_i(b_k^{-2}T) > (a_+-a_-)b_k^{-1}\Big]>0 \,,
\end{equation}
whereas by Proposition \ref{thm:mon-gaps}, these probabilities increase
upon replacing $Z_i(b_k^{-2} T)$ by i.i.d. 
Exponential($\lambda_-$) variables $\{ Z^-_i , i\in\N\}$.
Thus, in analogy with \eqref{eq:prepLDP1}, here
\[
\liminf_{k\to\infty} b_k^{-1} \Pb( N_k^{-} < c_- b_k^{-} )  > 0 \,,
\]
with $N_k^-$ of Poisson($m_k^-$) law for  
$m_k^- = \lambda_- (a_+ - a_-) b_k^{-1}$.
As $c_- b_k^{-1} = (1-\eta) m_k^{-} \uparrow \infty$ 
this contradict the standard exponential tail estimates for 
$N_k^-$, thereby ruling out \eqref{eq:ratsingtim-lbd2}
and \eqref{eq:ratsingtim-lbd}. 
\end{proof}

We next show that having a 
uniformly bounded below Radon-Nikodym derivative 
of $Q^0$
yields the continuity in $t$ of its quantile and
extends the convergence $Q^b \to Q^0$ to that of
the corresponding quantile.
\begin{lemma}\label{lem:quantile}
Suppose the $\frak{C}$-valued sequence $Q^{b_k}$ converges in law to some $Q^0$.
\newline
(a). Almost surely, the quantile $r(t,q):=[Q^0(t,\cdot)]^{-1}(q)$
of \eqref{def:y-quant}
is continuous in $t$ for any fixed $q \ge 0$ and 
continuous in $q \ge 0$ 
for any fixed $t$.
\newline
(b). Fixing $q > 0$ the functions 
$[Q^{b_k}(t,\cdot)]^{-1} (q) \to r(t,q)$
in law, uniformly over compact subsets of $\R_+$.
\end{lemma}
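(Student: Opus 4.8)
The backbone of the argument is Proposition~\ref{prop-limiting-rdn}. Work on the almost sure event where, for every $t\ge 0$, the measure $Q^0(t,\cdot)$ is absolutely continuous with density $U^0(t,\cdot)\in[\lambda_-,\lambda_+]$ on $(y_{Q^0(t)},\infty)$. Then $y_{Q^0(t)}>-\infty$ (otherwise the density, being $\ge\lambda_-$ on all of $(-\infty,0)$, would force $Q^0(t,(-\infty,0))=\infty$, contradicting $Q^0(t,\cdot)\in M_\star(\R)$), and the distribution function $F_t(r):=Q^0(t,(-\infty,r])$ is continuous on $\R$, vanishes on $(-\infty,y_{Q^0(t)}]$, and satisfies $\lambda_-(r-r')\le F_t(r)-F_t(r')\le\lambda_+(r-r')$ whenever $r\ge r'\ge y_{Q^0(t)}$; in particular $F_t$ maps $[y_{Q^0(t)},\infty)$ continuously and strictly increasingly onto $[0,\infty)$. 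This gives part~(a)'s continuity in $q$ at once: $r(t,0)=y_{Q^0(t)}$, while for $q>0$ the value $r(t,q)$ is the unique root of $F_t(\cdot)=q$, and the lower bound on $F_t$ forces $0\le r(t,q)-r(t,q')\le(q-q')/\lambda_-$ for $q>q'\ge 0$, so $q\mapsto r(t,q)$ is $(1/\lambda_-)$-Lipschitz.

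For continuity of $t\mapsto r(t,q)$ with $q\ge 0$ fixed: since $t\mapsto Q^0(t,\cdot)$ is $d_\star$-continuous (Proposition~\ref{prop:tightnessofQ}) and $\mu\mapsto\mu^{-1}(q)$ is $d_\star$-upper semicontinuous (as recorded after \eqref{def:y-quant}), the map $t\mapsto r(t,q)$ is upper semicontinuous, so it suffices to establish lower semicontinuity. Suppose $t_n\to t$ but $r(t_n,q)\le r(t,q)-\delta$ along a subsequence, for some $\delta>0$. Since $r(t_n,q)\ge y_{Q^0(t_n)}$ and $U^0(t_n,\cdot)\ge\lambda_-$ above $y_{Q^0(t_n)}$, the added mass over the interval $[r(t_n,q),r(t,q)-\delta/2]$, whose length is at least $\delta/2$, gives $Q^0\big(t_n,(-\infty,r(t,q)-\delta/2]\big)\ge q+\lambda_-\delta/2$. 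Passing to the limit via \eqref{eq:rere} with $a_-=-\infty$ yields $F_t(r(t,q)-\delta/2)=Q^0\big(t,(-\infty,r(t,q)-\delta/2]\big)\ge q+\lambda_-\delta/2>q$, which contradicts $r(t,q)-\delta/2<r(t,q)=\inf\{r:F_t(r)>q\}$. Hence $\liminf_n r(t_n,q)\ge r(t,q)$, and part~(a) follows.

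For part~(b), fix $q>0$ and $T<\infty$. One checks directly that $[Q^{b}(t,\cdot)]^{-1}(q)=Y^b_{\lfloor q/b\rfloor+1}(t)$, the scaled $\lfloor q/b\rfloor+1$-st ranked \abbr{Atlas}$_\infty$ particle, which is continuous in $t$ (only finitely many particles lie below any level, by Lemma~\ref{lem:upperboundforthedensityofQ}), so these and $t\mapsto r(t,q)$ are random elements of $C([0,T])$. As $(\frak{C},\frak{d})$ is Polish and $Q^{b_k}\to Q^0$ in law, Skorokhod's representation lets us assume $\gamma_k:=\sup_{t\le T}d_\star(Q^{b_k}(t,\cdot),Q^0(t,\cdot))\to 0$ almost surely, and it suffices to prove $\sup_{t\le T}|[Q^{b_k}(t,\cdot)]^{-1}(q)-r(t,q)|\to 0$ on that event. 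Fix $\eps\in(0,2q/\lambda_+)$, so that $r(t,q)-\eps/2>y_{Q^0(t)}$ for all $t$ (using $r(t,q)-y_{Q^0(t)}\ge q/\lambda_+$), and set $B:=\sup_{t\le T}r(t,q)<\infty$ by part~(a). Approximating $\1_{(-\infty,a]}$ from above and below by bounded-Lipschitz functions of transition width $\zeta:=\eps/4$, definition \eqref{eq:defofdast} of $d_\star$ yields a finite constant $C=C(B,\zeta)$ with
\[
Q^0\big(t,(-\infty,a-\zeta]\big)-C\gamma_k\ \le\ Q^{b_k}\big(t,(-\infty,a]\big)\ \le\ Q^0\big(t,(-\infty,a+\zeta]\big)+C\gamma_k
\]
for every $t\le T$ and $a\le B+\eps$. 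Once $C\gamma_k<\lambda_-\eps/8$, taking $a=r(t,q)+\eps/2$ together with $Q^0(t,(-\infty,r(t,q)+\eps/4])\ge q+\lambda_-\eps/4$ gives $Q^{b_k}(t,(-\infty,r(t,q)+\eps/2])>q$, hence $[Q^{b_k}(t,\cdot)]^{-1}(q)\le r(t,q)+\eps/2$; taking $a=r(t,q)-\eps/2$ together with $Q^0(t,(-\infty,r(t,q)-\eps/4])\le q-\lambda_-\eps/4$ gives $Q^{b_k}(t,(-\infty,r(t,q)-\eps/2])<q$, hence $[Q^{b_k}(t,\cdot)]^{-1}(q)>r(t,q)-\eps/2$. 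Since both hold uniformly in $t\le T$, we get $\limsup_k\sup_{t\le T}|[Q^{b_k}(t,\cdot)]^{-1}(q)-r(t,q)|\le\eps$, and letting $\eps\downarrow 0$ yields the claimed a.s.\ uniform convergence, hence convergence in law in $C([0,T])$.

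The crux is making the quantile convergence \emph{uniform in $t$}; this is precisely where the uniform-in-$t$ two-sided density bound $\lambda_-\le U^0(t,\cdot)\le\lambda_+$ of Proposition~\ref{prop-limiting-rdn} is indispensable, as it simultaneously controls all the distribution functions $F_t$ and pins their inverses with a common Lipschitz modulus; it is also the only place the quantitative bound $U^0\ge\lambda_-$ enters. The translation of $d_\star$-proximity into proximity of the masses $(-\infty,a]$, uniformly for $a$ in a bounded range, is the routine bookkeeping indicated above.
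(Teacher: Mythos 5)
Your proof is correct and follows essentially the same route as the paper: both parts rest on the two-sided density bounds $\lambda_-\le U^0\le\lambda_+$ from Proposition~\ref{prop-limiting-rdn}, with part (b) obtained via Skorokhod representation, uniform-in-$t$ convergence of the distribution functions through bounded-Lipschitz approximations of indicators, and the lower density bound to pin the quantiles. The only (minor) deviation is in the $t$-continuity of part (a), where you split into upper semicontinuity of $\mu\mapsto\mu^{-1}(q)$ plus a lower-semicontinuity argument by contradiction, whereas the paper derives a direct estimate $|r(t,q)-r(s,q)|\le\lambda_-^{-1}|F^0(s,r(t,q))-F^0(s,r(s,q))|$ from the joint continuity of $F^0$.
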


\begin{rmk}\label{rmk:continfsupp} In particular, almost surely, 
the function $y_{Q^0(t)}=r(t,0)$ is continuous.
\end{rmk}

\begin{proof} 
(a). From Proposition \ref{prop-limiting-rdn} we know that 
the $d_\star$-continuous $t\mapsto Q^0(t,\cdot)$ has 
no atoms. Consequently, 
\begin{equation}\label{dfn:F0}
F^0(t,r):=Q^0(t,(-\infty,r]) = \int_{-\infty}^r U^0(t,x) dx 
\end{equation}
is continuous in $t$ and in $r$. So, fixing $q>0$, 
we must have that for any $t \ge 0$ 
\eq\label{eq:snoke}
\lim_{s\to t} F^0(s,r(t,q)) = F^0(t,r(t,q)) = q \,.
\en
In particular, since $F^0(s,r)=0$ whenever $r \le r(s,0)$, 
necessarily $r(t,q) > r(s,0)$ for all $|s-t|$ small enough.
Having in this case, by Proposition \ref{prop-limiting-rdn}, that 
$U^0(s,x) \ge \lambda_-$ throughout 
$[r(s,q),r(t,q)]$,
we conclude from \eqref{dfn:F0} and \eqref{eq:snoke} that as $s \to t$,
\begin{equation}\label{eq:r-cont-s}
|r(t,q)-r(s,q)| \le \lambda_-^{-1} |F^0(s,r(t,q)) - F^0(s,r(s,q))| \to 0 \,.
\end{equation}
Further, fixing $t \ge 0$ it follows from \eqref{def:y-quant},
\eqref{dfn:F0} and having $U^0(t,x) \ge \lambda_-$ for
$x>r(t,0)$ that for any $q' \in [0,q]$
\[
r(t,q) - r(t,q') \le \lambda_{-}^{-1} 
\int_{r(t,q')}^{r(t,q)} U^0(t,x) dx = \lambda_{-}^{-1} (q-q') \,.
\] 
Hence, $r(t,q)$ is uniformly Lipschitz continuous in $q$ and taking 
$q \to 0$ allows us to deduce the continuity of $t \mapsto r(t,q)$ at $q=0$
from such at $q>0$ (due to \eqref{eq:r-cont-s}).
 
\noindent
(b) With $(\frak{C},\frak{d})$ a separable metric
space, we can assume 
\abbr{wlog} that almost surely $Q^{b_k} \to Q^0$  
(by Skorokhod's representation theorem), with 
$U^0(t,x) \le \lambda_+$, and deduce that then
\begin{equation}\label{eq:unifcdfcvg}
\lim_{k\to\infty}\sup_{t\in[0,T], x \le r} 
\vert F^{b_k}(t,x)-F^0(t,x) \vert = 0 \,, \qquad \forall r < \infty \,.
\end{equation}
Indeed, 
$f_{x,\epsilon}(y) := (1 - (y-x)_+/\epsilon) \vee 0$ with
$\|f_{x,\epsilon}\|_{\text{BL}}=\epsilon^{-1}$ is pointwise 
between $\1_{(-\infty,x]}$ and $\1_{(-\infty,x+\epsilon]}$. 
Hence, for any $x \in \R$ and $b,\epsilon > 0$ 
\[
\sup_{t\in[0,T]} \{ F^{b}(t,x)-F^0(t,x+\epsilon) \} 
\leq \sup_{t\in[0,T]} 
\{ Q^{b}(f_{x,\epsilon})(t) - Q^0(f_{x,\epsilon})(t) \}
\,.
\]
By the $\frak{d}$-convergence $Q^{b_k} \to Q^0$, the \abbr{rhs}
converges to zero along the sequence $b=b_k$, uniformly in $x \le r$.
Since the same applies for $f_{x-\epsilon,\epsilon}(y)$, we
have that for any fixed $\epsilon > 0$ and $r < \infty$,
\begin{align}\label{eq:F-bd1}
\limsup_{k \to \infty} 
\sup_{t\in[0,T], x \le r} \{ F^{0}(t,x-\epsilon)-F^{b_k}(t,x) \} &\le 0 \,,\\
\limsup_{k \to \infty} 
\sup_{t\in[0,T], x \le r} \{ F^{b_k}(t,x)-F^{0}(t,x+\epsilon) \} &\le 0 \,.
\label{eq:F-bd2}
\end{align}
Thanks to the uniform bound on $U^0(t,x)$, we have that 
\begin{equation}\label{eq:F-bd3}
\sup_{x \in \R} \{ F^0(t,x+\epsilon) - F^0(t,x-\epsilon) \} \le 2 \epsilon \lambda_+ \,.
\end{equation}
With $x \mapsto F^0(t,x)$ non-decreasing, we get \eqref{eq:unifcdfcvg}
upon combining \eqref{eq:F-bd1}-\eqref{eq:F-bd3} and taking $\epsilon \to 0$.
Fixing $q>0$ and building of \eqref{eq:unifcdfcvg}, we proceed to show that   
\begin{equation}\label{eq:elegancia}
\lim_{k\to\infty}\sup_{t\in[0,T]} | r_k(t) - r(t) | = 0 \,,
\end{equation}
for $r_k(t):= [Q^{b_k}(t,\cdot)]^{-1}(q)$ and $r(t):=[Q^0(t,\cdot)]^{-1}(q)$.
Indeed, with $F^0(t,r(t))=q$ and 
$F^0(t,x+\epsilon) - F^0(t,x) \ge \lambda_- \epsilon$
for any $t$ and $x > y_{Q^0(t)}$
(by Proposition \ref{prop-limiting-rdn}), we have that for any $\epsilon>0$,
\[
F^0(t,r(t)-\epsilon) \le 
(q- \lambda_- \epsilon)_+ \,.
\]
It then follows from \eqref{eq:unifcdfcvg} that for all $k$ large enough, 
\[
\sup_{t\in[0,T]}\{ F^{b_k}(t,r(t)-\epsilon)\} < q 
\]
and consequently $r_k(t) \ge r(t)-\epsilon$ for all $t\in[0,T]$, resulting with 
\begin{equation}\label{eq:rk-r-bd}
 \limsup_{k \to \infty} \sup_{t\in[0,T]} \{r(t) -r_k(t) \} \le \epsilon.
\end{equation}
Similarly, with $F^0(t,r(t)+\epsilon) \ge q + \lambda_- \epsilon$ 
we deduce from \eqref{eq:unifcdfcvg} that $r_k(t) \le r(t) +\epsilon$
for all $k$ large enough and $t \in [0,T]$. Hence, for any $\epsilon >0$,
\[
 \limsup_{k \to \infty} \sup_{t\in[0,T]} \{ r_k(t) -r(t) \} \le \epsilon 
\]
which together with \eqref{eq:rk-r-bd} results with \eqref{eq:elegancia},
thereby completing the proof.
\end{proof}

We proceed to tie the $\epsilon$-quantile of 
the measure $Q^b(t,\cdot)$, 
to the corresponding re-scaled left-most particle, in the 
limit $b \to 0$ followed by $\epsilon \to 0$.
\begin{proposition}\label{prop:tightnessofthebottom}
For any $T$ finite and $\delta>0$,
\begin{equation}\label{eq:tight-bot-part}
\lim_{\epsilon \downarrow 0}
\limsup_{b \to 0} \Pb\Big[\sup_{t\leq T}
\{ [Q^b(t,\cdot)]^{-1}(\epsilon) - Y_1^b(t) \} \geq 5 \delta
\Big] = 0 \,.
\end{equation}
\end{proposition}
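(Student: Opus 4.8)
The plan is to reduce the assertion to a uniform-in-time \emph{lower} bound on the number of \abbr{Atlas}$_\infty$ particles lying in a short window just above the re-scaled left-most particle, to establish such a bound first at a fixed time using the stochastic domination of Proposition~\ref{thm:mon-gaps}, and then to upgrade it to a supremum over $t\le T$ by discretising time and invoking the mesoscopic regularity of $Y^b_1$ from Lemma~\ref{lem:mesoscopicregularity}.

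First I would observe that, by the definition \eqref{def:y-quant} of the quantile, the event whose probability appears in \eqref{eq:tight-bot-part} forces $Q^b(t^\star,(-\infty,Y^b_1(t^\star)+5\delta))\le\epsilon$ for some $t^\star\le T$ (the relevant supremum being attained, since $t\mapsto [Q^b(t,\cdot)]^{-1}(\epsilon)-Y^b_1(t)$ is continuous); unravelling $X^b_i(t)=bX_i(tb^{-2})$ and writing $s:=t^\star b^{-2}$, this says that at most $\epsilon b^{-1}$ ranked particles lie in $[Y_1(s),Y_1(s)+5\delta b^{-1})$. The problem is thus reduced to bounding, for each fixed $t$, the count of particles in a window $[Y^b_1(t),Y^b_1(t)+a\delta]$, which in terms of spacings equals $1+\#\{k:\sum_{i=1}^kZ_i(tb^{-2})\le a\delta b^{-1}\}$. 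For this I would use Proposition~\ref{thm:mon-gaps}: since $\underline{Z}(\cdot)\preceq\underline{Z}^{(\lambda_-)}$ with $\underline{Z}^{(\lambda_-)}$ having i.i.d.\ Exponential($\lambda_-$) coordinates ($\lambda_-=2\wedge\lambda$) and $\{\sum_{i=1}^m z_i>L\}$ is an up-set, one obtains, for $m:=\lceil 3\epsilon b^{-1}\rceil$ and any fixed $t\ge0$,
\begin{equation}\label{eq:bot-fixedtime}
\Pb\Big[\#\{i:X^b_i(t)\le Y^b_1(t)+3\delta\}\le 3\epsilon b^{-1}\Big]\le\Pb\Big[\sum_{i=1}^m Z^-_i>3\delta b^{-1}\Big]\le e^{-c\epsilon b^{-1}}
\end{equation}
for i.i.d.\ $Z^-_i\sim$ Exponential($\lambda_-$), all $b\le1$ and some $c=c(\delta,\lambda)>0$, valid as soon as $\epsilon\le\delta\lambda_-/3$ (so $\E\sum_{i=1}^m Z^-_i$ is a bounded fraction of $3\delta b^{-1}$ and the standard Cram\'er bound for sums of i.i.d.\ exponentials applies). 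Dually, the complementary comparison $\underline{Z}(\cdot)\succeq\underline{Z}^{(\lambda_+)}$ together with a Poisson upper-tail estimate gives $\Pb[\#\{i:X^b_i(t)\le Y^b_1(t)+4\delta\}>8\delta\lambda_+ b^{-1}]\le e^{-cb^{-1}}$ for all $b\le1$.

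Next I would pass from a single time to the supremum by discretisation. Fix $R\in(0,1]$ (e.g.\ $R=1$) and the mesh $t_j:=jb^R$, $0\le j\le J:=\lceil Tb^{-R}\rceil$. By Lemma~\ref{lem:mesoscopicregularity} and a union bound, outside an event of probability at most $C(J+1)e^{-cb^{-R}}$ one has $\sup_{s\le b^R}|Y^b_1(t_j+s)-Y^b_1(t_j)|\le\delta$ for all $j$; on that event any $t^\star$ as above lies in some $[t_j,t_{j+1}]$ and satisfies $Q^b(t^\star,(-\infty,Y^b_1(t_j)+4\delta))\le\epsilon$. Then every particle with $X^b_i(t_j)\le Y^b_1(t_j)+3\delta$ either has $X^b_i(t^\star)<Y^b_1(t_j)+4\delta$ — and there are at most $\epsilon b^{-1}$ of these — or else $\sup_{s\le b^R}|X^b_i(t_j+s)-X^b_i(t_j)|\ge\delta$. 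To bound the number of particles of the second type I would decompose $X^b_i(t_j+s)-X^b_i(t_j)=(\bar{X}^b_i(t_j+s)-\bar{X}^b_i(t_j))+\Delta^b_i(t_j,t_j+s)$ via the semi-infinite Harris system of Section~\ref{sec:apriori-Qb}, where $\Delta^b_i\ge0$ is non-decreasing in $s$ and $b\sum_i\Delta^b_i(t_j,t_j+b^R)=b^R$ by \eqref{eq:drift-iden}: the number of $i$ with $\Delta^b_i(t_j,t_j+b^R)\ge\delta/2$ is then at most $2\delta^{-1}b^{R-1}\le\epsilon b^{-1}$ once $b$ is small, while conditionally on $\mathcal{F}_{t_jb^{-2}}$ (which freezes $A_j:=\{i:X^b_i(t_j)\le Y^b_1(t_j)+4\delta\}$) the number of $i\in A_j$ with $\sup_{s\le b^R}|\bar{X}^b_i(t_j+s)-\bar{X}^b_i(t_j)|\ge\delta/2$ is Binomial$(\#A_j,p_b)$ with $p_b\le Ce^{-c\delta^2b^{-R}}$ by the reflection principle and Gaussian tail bounds. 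On $\{\#A_j\le 8\delta\lambda_+ b^{-1}\ \forall j\}$ — whose complement has probability at most $(J+1)e^{-cb^{-1}}$ by the dual bound above — a conditional Markov inequality then bounds the probability that this last count exceeds $\epsilon b^{-1}$ for some $j$ by $(J+1)\,8C\delta\lambda_+\epsilon^{-1}e^{-c\delta^2b^{-R}}$. Intersecting all these good events, the event in \eqref{eq:tight-bot-part} would force $\#\{i:X^b_i(t_j)\le Y^b_1(t_j)+3\delta\}\le 3\epsilon b^{-1}$ for some $j\le J$, an event of probability at most $(J+1)e^{-c\epsilon b^{-1}}$ by \eqref{eq:bot-fixedtime} and a union bound.

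Collecting these estimates, for every fixed $\epsilon\le\delta\lambda_-/3$ the probability in \eqref{eq:tight-bot-part} would be bounded by a finite sum of terms of the form $(J+1)e^{-cb^{-R}}$, $(J+1)e^{-cb^{-1}}$ and $(J+1)\epsilon^{-1}e^{-c\delta^2b^{-R}}$, each of which tends to $0$ as $b\to0$ since $J+1$ grows only polynomially in $b^{-1}$; letting $\epsilon\downarrow0$ then yields \eqref{eq:tight-bot-part}. The step I expect to be the main obstacle is precisely this passage from a single time to the supremum over $t\le T$: because the relevant window is anchored at the moving point $Y^b_1(t)$, one cannot simply union-bound \eqref{eq:bot-fixedtime} over a time mesh, and must control simultaneously both the oscillation of $Y^b_1$ over one mesh step (supplied by Lemma~\ref{lem:mesoscopicregularity}) and the number of near-bottom particles that can escape a slightly enlarged window within one mesh step (handled by the Harris decomposition and the Binomial/Poisson estimates above); the fixed-time bound \eqref{eq:bot-fixedtime} itself is essentially immediate from Proposition~\ref{thm:mon-gaps}.
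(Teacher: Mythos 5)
Your proof is correct, but it takes a genuinely different route from the paper's. Both proofs share the same two ingredients --- the fixed-time bound on $[Q^b(t,\cdot)]^{-1}(\epsilon)-Y^b_1(t)$ obtained from the stochastic-domination comparison $\underline{Z}(\cdot)\preceq\underline{Z}^{(\lambda_-)}$ of Proposition~\ref{thm:mon-gaps}, and the control of $\sup_{s\le b^R}|Y^b_1(t_j+s)-Y^b_1(t_j)|$ from Lemma~\ref{lem:mesoscopicregularity} --- but they diverge completely in how the supremum over $t\le T$ is filled in between mesh points. The paper routes through the \emph{continuous limit} $Q^0$: it passes to a convergent subsequence $Q^{b_k}\to Q^0$ (permitted by the tightness of Proposition~\ref{prop:tightnessofQ}) and controls the mesh-interval fluctuation of the quantile via Lemma~\ref{lem:quantile}, i.e.\ by the uniform convergence $[Q^{b_k}(\cdot,\cdot)]^{-1}(\epsilon)\to[Q^0(\cdot,\cdot)]^{-1}(\epsilon)$ and the continuity in $t$ of $[Q^0(t,\cdot)]^{-1}(\epsilon)$. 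You instead stay entirely at positive $b$ and track particles directly: you take a lower bound on the number of particles in a $3\delta$-window at the mesh time $t_j$, subtract off the at most $2\delta^{-1}b^{R-1}+\epsilon b^{-1}$ particles that can escape within one mesh step (bounded by splitting via the Harris decomposition into the deterministic drift contribution of \eqref{eq:drift-iden} and a conditionally Binomial Brownian contribution with the auxiliary Poisson cap $\#A_j\le8\delta\lambda_+b^{-1}$ supplied by the reverse comparison $\underline{Z}(\cdot)\succeq\underline{Z}^{(\lambda_+)}$), and contradict the at-most-$\epsilon b^{-1}$ count forced by the quantile at $t^\star$. Your argument is more self-contained --- it avoids the compactness-and-subsequence manoeuvre and does not need Lemma~\ref{lem:quantile} or its prerequisite Proposition~\ref{prop-limiting-rdn} at all --- at the price of the extra Harris/Binomial bookkeeping; the paper's approach is shorter precisely because it reuses regularity facts about $Q^0$ that are needed later anyway. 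The only minor blemish in your write-up is the arithmetic at the end: survivors $\ge 3\epsilon b^{-1}-2\epsilon b^{-1}=\epsilon b^{-1}$ does not strictly contradict the quantile bound $\le\epsilon b^{-1}$; starting from a $4\epsilon b^{-1}$ lower bound (or tightening the escape budgets) fixes this, and otherwise the argument closes.
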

\begin{proof}
We set $A_t:=\{[Q^b(t,\cdot)]^{-1}(\epsilon)-Y_1^b(t) \ge \delta\}$,
$Z_i^b(t) = b Z_i(b^{-2} t)$, $i \ge 1$ and
$m_b := \lceil \epsilon/b \rceil -1$.
Noting that 
\[
[Q^b(t,\cdot)]^{-1}(\epsilon) - Y_1^b (t) 
= \sum_{i=1}^{m_b} Z^b_i(t) 
\]
and recalling Proposition \ref{thm:mon-gaps} that 
$b^{-1} \underline{Z}^b(t) 
\preceq
\underline{Z}^{(\lambda_-)}$,
leads to 
\begin{equation}\label{eq:paella2}
\Pb[A_t]\leq\Pb\Big[
\frac{1}{m_b} \sum_{i=1}^{m_b}Z_i^{(\lambda_-)}\geq 
\frac{\delta}{\epsilon} \Big] := p_b \,.
\end{equation}
By Cramer's theorem, $\limsup_b m_b^{-1} \log p_b \le - 
I_{\lambda_-}(\delta/\epsilon)$, where   
$I_{\lambda_-} (\cdot)$ is
positive on $(\lambda_-^{-1},\infty)$. In particular, 
$b^{-1} p_b \to 0$ for any $\epsilon \le \delta \lambda_-/2$, so by 
\eqref{eq:paella2}
\begin{equation}\label{eq:paella}
\limsup_{b \to 0} 
\Pb\Big[\bigcup_{i=0}^{[T/b]} A_{ib} \Big] = 0 \,.
\end{equation}
For any $T$ finite and $\delta,\epsilon>0$, restricting to any
sub-sequence such that $Q^{b_k} \to Q^0$ as in Lemma \ref{lem:quantile},
the event in 
\eqref{eq:tight-bot-part} is contained in 
\begin{align}
\bigcup_{i=0}^{[T/b]} A_{ib}& \bigcup_{i=0}^{[T/b]}  \{\sup_{s\in[0,b]}|Y_1^b(ib+s)-Y_1^b(ib)|\geq\delta\}
\nonumber \\
& \bigcup 
\{\sup_{t\in[0,T]}
|[Q^{b}(t,\cdot)]^{-1} (\epsilon)-[Q^0(t,\cdot)]^{-1}(\epsilon)| \ge \delta \}
\nonumber\\
&\bigcup  
\{\sup_{t\in[0,T]}
|[Q^{0}(t+b,\cdot)]^{-1} (\epsilon) - [Q^0(t,\cdot)]^{-1}(\epsilon)
|\geq\delta\} \,.
\label{bound-sup-3parts}
\end{align}
We thus conclude by controlling the probability of the first 
event in the union \eqref{bound-sup-3parts} via \eqref{eq:paella}, that 
of the second event via Lemma \ref{lem:mesoscopicregularity} (with $R=1$), 
handling the 
third event by the {uniform over $[0,T]$ convergence} from part (b) of Lemma \ref{lem:quantile} (at $q=\epsilon>0$),
where the probability of the last event goes to zero as $b \to 0$,
thanks to the continuity of 
$t \mapsto [Q^0(t,\cdot)]^{-1}(\epsilon)$ from part (a) of Lemma \ref{lem:quantile}.
\end{proof}

One important consequence of Proposition \ref{prop:tightnessofthebottom}
is the existence of continuous
limit points in distribution for $(Y_1^b(t))_{t\geq0}$ (when $b \to 0$), which are
further related to the corresponding
limit points of $(Q^b(t,\cdot))_{t \ge 0}$ via the mapping $Q\mapsto y_{Q(\cdot)}$.
\begin{corollary}\label{cor:y1yq} $~$
If $b_k \to 0$ and $Q^{b_k} \to Q^0$ in distribution over $\frak{C}$, 
then $Y_1^{b_k}(t) \to y_{Q^0(t)}$ in distribution, with respect to 
uniform convergence over compacts.
\end{corollary}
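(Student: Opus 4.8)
The plan is to work along the fixed subsequence $b_k \to 0$ for which $Q^{b_k} \to Q^0$ in distribution over $\frak{C}$, and to show that the pair $(Y_1^{b_k}(\cdot), Q^{b_k}(\cdot,\cdot))$ has the claimed joint limit. By Skorokhod's representation theorem (legitimate since $(\frak{C},\frak{d})$ is a separable metric space, as noted just above), we may assume $Q^{b_k} \to Q^0$ almost surely, uniformly on compact subsets of $\R_+$. The target is then to prove that almost surely $Y_1^{b_k}(t) \to y_{Q^0(t)}$ uniformly over each compact $[0,T]$, which, combined with tightness of $\{Y_1^{b_k}\}$ in $C([0,T])$, upgrades to the stated convergence in distribution. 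The key point will be to sandwich $Y_1^b(t)$ between $[Q^b(t,\cdot)]^{-1}(0)$ and $[Q^b(t,\cdot)]^{-1}(\epsilon)$ up to error $O(\delta)$, and then let $b \to 0$ followed by $\epsilon \to 0$.

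First I would record the two elementary sandwich inequalities. On the one hand, since the $b$-rescaled particles to the right of $Y_1^b(t)$ contribute mass $b$ each, the quantile $[Q^b(t,\cdot)]^{-1}(\epsilon)$ equals $Y_1^b(t) + \sum_{i=1}^{m_b} Z_i^b(t)$ with $m_b = \lceil \epsilon/b\rceil - 1 \ge 0$ (exactly as used in the proof of Proposition \ref{prop:tightnessofthebottom}), so in particular $[Q^b(t,\cdot)]^{-1}(\epsilon) \ge Y_1^b(t)$ for every $\epsilon > 0$, while $[Q^b(t,\cdot)]^{-1}(0) = Y_1^b(t)$ exactly (the infimum of the support of $Q^b(t,\cdot)$ is precisely the left-most particle). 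Proposition \ref{prop:tightnessofthebottom} controls the upper deviation: for any $T < \infty$ and $\delta > 0$,
\[
\lim_{\epsilon \downarrow 0} \limsup_{b \to 0} \Pb\Big[\sup_{t \le T} \big\{ [Q^b(t,\cdot)]^{-1}(\epsilon) - Y_1^b(t) \big\} \ge 5\delta \Big] = 0 \,.
\]
Hence, on a high-probability event, $Y_1^b(t) \le [Q^b(t,\cdot)]^{-1}(\epsilon) \le Y_1^b(t) + 5\delta$ uniformly over $[0,T]$ once $\epsilon$ is small and $b$ is small.

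Next I would transfer this to $y_{Q^0}$. By part (b) of Lemma \ref{lem:quantile} applied at $q = \epsilon > 0$, in the Skorokhod coupling $\sup_{t \le T} |[Q^{b_k}(t,\cdot)]^{-1}(\epsilon) - [Q^0(t,\cdot)]^{-1}(\epsilon)| \to 0$ almost surely; and by part (a) of Lemma \ref{lem:quantile} together with Remark \ref{rmk:continfsupp}, $q \mapsto [Q^0(t,\cdot)]^{-1}(q)$ is continuous at $q = 0$ uniformly in $t$ — indeed the Lipschitz bound $[Q^0(t,\cdot)]^{-1}(\epsilon) - [Q^0(t,\cdot)]^{-1}(0) \le \lambda_-^{-1}\epsilon$ from the proof of Lemma \ref{lem:quantile}(a) (using $U^0(t,x) \ge \lambda_-$ for $x > y_{Q^0(t)}$) makes this quantitative. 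Combining: for fixed small $\epsilon$,
\[
\limsup_{k \to \infty} \sup_{t \le T} \big| [Q^{b_k}(t,\cdot)]^{-1}(\epsilon) - y_{Q^0(t)} \big| \le \lambda_-^{-1}\epsilon
\]
almost surely. Putting the two displays together, $\limsup_{k} \sup_{t\le T}|Y_1^{b_k}(t) - y_{Q^0(t)}|$ is bounded by $5\delta + \lambda_-^{-1}\epsilon$ with probability tending to $1$; since $\delta,\epsilon>0$ are arbitrary and (along a further subsequence if needed) the convergence is almost sure, we conclude $\sup_{t\le T}|Y_1^{b_k}(t) - y_{Q^0(t)}| \to 0$ in probability, hence in distribution. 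Finally, tightness of $(Y_1^{b_k})$ in $C([0,T])$ follows from Lemma \ref{lem:mesoscopicregularity} (with $R = 1$, giving the requisite modulus-of-continuity estimate uniformly in $b \le 1$) together with tightness of $Y_1^{b_k}(0)$, so the convergence holds jointly with $Q^{b_k}$ and upgrades to convergence in distribution with respect to uniform convergence over compacts.

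The main obstacle is the interchange of limits $b \to 0$ and $\epsilon \to 0$: the lower bound $Y_1^b \le [Q^b]^{-1}(\epsilon)$ is free, but the reverse control $[Q^b]^{-1}(\epsilon) - Y_1^b \le O(\delta)$ degenerates as $b\to 0$ for fixed $\epsilon$ only because of the a-priori stochastic domination of the spacings (Proposition \ref{thm:mon-gaps}), and the error $\lambda_-^{-1}\epsilon$ on the limit side genuinely requires the uniform lower density bound $U^0 \ge \lambda_-$ from Proposition \ref{prop-limiting-rdn}; keeping both $\epsilon$-errors simultaneously small while passing $k \to \infty$ is the delicate bookkeeping step, handled exactly by the order of limits in Proposition \ref{prop:tightnessofthebottom}.
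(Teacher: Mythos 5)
Your proposal is correct and follows essentially the same route as the paper's proof: the same three-way decomposition of $\sup_{t\le T}|Y_1^{b_k}(t)-y_{Q^0(t)}|$ via the $\epsilon$-quantile, controlled respectively by Proposition \ref{prop:tightnessofthebottom}, Lemma \ref{lem:quantile}(b), and Lemma \ref{lem:quantile}(a), with the same order of limits ($k\to\infty$ then $\epsilon\to 0$) after a Skorokhod coupling. The extra tightness step via Lemma \ref{lem:mesoscopicregularity} is not needed (the paper deduces uniform convergence directly), but it does no harm.
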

\begin{proof}
We fix $T$ finite, $\delta >0$ and, since $Q^{b_k} \to Q^0$ in $\frak{C}$ we can (and will) assume \abbr{wlog}
that the convergence holds almost surely, in the proper probability space.
Setting $r(t,q)= [Q^{-1}(t,\cdot)](q)$ as in Lemma \ref{lem:quantile}
we have that for any $\epsilon > 0$
\[
\begin{aligned}
\Pb[\sup_{t \le T}  
\{ | & Y_1^b(t) - y_{Q^0(t)} | \} \ge 3\delta]
\le 
\Pb[\sup_{t \le T}
\{ \left|Y_1^b(t)-[Q^b(t,\cdot)]^{-1}(\epsilon) \right| \} \ge \delta] 
\nonumber \\
&+ 
\Pb[\sup_{t \le T} 
\{ \left|[Q^b(t,\cdot)]^{-1}(\epsilon)-r(t,\epsilon) \right| \} \ge \delta]
+ 
\Pb[\sup_{t \le T} \{ \left|r(t,\epsilon)-r(t,0) \right| \} \ge \delta].
\nonumber
\end{aligned}
\]
Considering the $\limsup_k$ along $b=b_k \to 0$, followed by $\epsilon \to 0$,
the first term on the \abbr{rhs} goes to zero
by virtue of Proposition \ref{prop:tightnessofthebottom}. 
In this setting the second term on the \abbr{rhs} 
also goes to zero, by part (b) of Lemma \ref{lem:quantile}.
Finally, by part (a) of Lemma \ref{lem:quantile} the third 
term on the \abbr{rhs} also goes to zero as $\epsilon \to 0$.
Consequently, we have shown that almost surely, along the sequence $b_k \to 0$, the function 
$Y^b_1(t)$ converges, uniformly in $t$, to 
the continuous function $y_{Q^0(t)}$ (recall
Lemma \ref{lem:quantile}(a) at $q=0$).
\end{proof}

We shall make use of the following weak (distributional) Stefan problem.
\begin{defn}\label{def-weak-Stefan}
Let $\langle u,f \rangle(t):=\int f(t,x) u(t,x) dx$ for
bounded, Lebesgue measurable $u(t,\cdot)$ 
and any continuous, compactly supported $f(t,\cdot)$.
For such $u$ and $y(\cdot)$ continuous, further set the linear operator 
$\MS_{u,y}[f]$ on $\Cmp := C_c^{1,2}(\R_+ \times \R)$ such that for 
$\CL^\star f := f_t + \frac{1}{2} f_{xx}$ and any $t \in \R_+$ 
\begin{align}\label{eq:weak-stefan}
\MS_{u,y} [f] (t) := \langle u,f \rangle (t) -
\lambda \int_0^\infty f(0,x) dx - 
\int_0^t [\langle u, \CL^\star f \rangle (s) + f_x(s,y(s)) ] ds \,.
\end{align}
A 
bounded,
measurable $u(\cdot,\cdot) \ge 0$ of continuous
$y_u(t) := \inf\{x: \int_{-\infty}^x u(t,\xi)d\xi > 0\}$, such that
$u(t,\cdot)$ is uniformly positive on $(y_u(t),\infty)$,
is called a weak solution of the Stefan problem, if 
$\MS_{u,y_u} [f] \equiv 0$ for all $f \in \Cmp$.
\end{defn}

Proceeding to show that any possible limit point $U^0(t,\cdot)$ 
(and the associated $y_{Q^0(t)}$), must be a weak solution as in
Definition \ref{def-weak-Stefan}, we start 
with the following handy approximation tool. 
\begin{lemma}\label{lem:cont-op-sep-space}
Fixing $u$, $y$ and $t \in \R_+$, the functional 
$\MS_{u,y}[\cdot](t)$ is continuous \abbr{wrt} 
\[
\|f\|_\star := \sum_{r=0}^\infty 2^{-r} 
 \sup_{s \le r+1} 
\Big\{ \|f(s,\cdot)\|_1 + \|\CL^\star f (s,\cdot)\|_1  + 
\|f_x(s,\cdot)\|_\infty \Big\} \,,
\]
and $\Cmp$ is separable under this norm. 
\end{lemma}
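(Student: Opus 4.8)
The plan is to verify the two claims of Lemma~\ref{lem:cont-op-sep-space} separately, starting with continuity. Fix $u$, $y$ and $t\in\R_+$. Pick $r_0\in\N$ with $r_0 \ge t$, so that for any $f\in\Cmp$ the expression $\MS_{u,y}[f](t)$ in \eqref{eq:weak-stefan} only involves the values of $f$, $\CL^\star f$ and $f_x$ on the time-slab $[0,r_0+1]$. Then I would bound each of the three terms in \eqref{eq:weak-stefan} directly: since $u(t,\cdot)$ is bounded, $|\langle u,f\rangle(t)| \le \|u(t,\cdot)\|_\infty \|f(t,\cdot)\|_1$; the initial term satisfies $|\lambda\int_0^\infty f(0,x)\,dx| \le \lambda \|f(0,\cdot)\|_1$; and the time integral is bounded by $\int_0^t (\|u(s,\cdot)\|_\infty \|\CL^\star f(s,\cdot)\|_1 + \|f_x(s,\cdot)\|_\infty)\,ds \le t\,(\|u\|_\infty \vee 1)\sup_{s\le r_0+1}\{\|\CL^\star f(s,\cdot)\|_1 + \|f_x(s,\cdot)\|_\infty\}$. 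Summing up, $|\MS_{u,y}[f](t)| \le C(u,t)\,\sup_{s\le r_0+1}\{\|f(s,\cdot)\|_1 + \|\CL^\star f(s,\cdot)\|_1 + \|f_x(s,\cdot)\|_\infty\}$, and since the $r=r_0$ term of the sum defining $\|\cdot\|_\star$ dominates (up to the factor $2^{r_0}$) this supremum, we get $|\MS_{u,y}[f](t)| \le 2^{r_0} C(u,t)\,\|f\|_\star$. By linearity of $f\mapsto \MS_{u,y}[f](t)$, this is precisely the asserted continuity.

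For separability of $(\Cmp,\|\cdot\|_\star)$, the plan is to exhibit a countable dense set. First observe that $\|\cdot\|_\star$ is a genuine norm on $\Cmp$ (if $\|f\|_\star=0$ then $f(s,\cdot)\equiv 0$ in $L^1$ for a.e.\ $s$, hence by continuity $f\equiv 0$). Next, for each fixed compact $K=[0,m]\times[-m,m]$, the subspace $\Cmp_K$ of functions in $\Cmp$ supported in $K$ is separable under $\|\cdot\|_\star$: indeed, on $\Cmp_K$ the norm $\|\cdot\|_\star$ is controlled by (a constant times) the $C^{1,2}(K)$-norm $\|f\|_{C^{1,2}(K)} := \sup_K(|f| + |f_t| + |f_x| + |f_{xx}|)$, because $\|f(s,\cdot)\|_1 \le 2m\|f\|_{C^{1,2}(K)}$, likewise $\|\CL^\star f(s,\cdot)\|_1 \le 2m\,\tfrac{3}{2}\|f\|_{C^{1,2}(K)}$ and $\|f_x(s,\cdot)\|_\infty \le \|f\|_{C^{1,2}(K)}$, so the sum over $r$ converges to at most $(4m+4)\|f\|_{C^{1,2}(K)}$. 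Since $C^{1,2}(K)$ is separable in its own norm and $\Cmp_K$ is a subspace of it, $\Cmp_K$ is separable under $\|\cdot\|_{C^{1,2}(K)}$ and hence under the weaker $\|\cdot\|_\star$. Finally, $\Cmp = \bigcup_{m\in\N} \Cmp_{[0,m]\times[-m,m]}$ is a countable union of separable spaces, hence separable; the union of countable dense subsets of each $\Cmp_{[0,m]\times[-m,m]}$ is dense in $\Cmp$ because any $f\in\Cmp$ lies in some $\Cmp_{[0,m]\times[-m,m]}$.

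The main obstacle, and the point requiring a little care, is the separability argument rather than the continuity bound: one must check that restricting to a fixed compact support lets $\|\cdot\|_\star$ be dominated by a classical separable norm, which works only because the factors $2^{-r}$ make the tail of the series harmless and because functions supported in $[-m,m]$ have all the relevant $L^1$-norms controlled by their sup-norms on the support. An alternative, if one prefers to avoid invoking separability of $C^{1,2}(K)$, is to note that smooth compactly supported functions with rational-coefficient polynomial times bump-function structure are $\|\cdot\|_\star$-dense (e.g.\ by mollification plus polynomial approximation of $f$, $f_t$, $f_x$, $f_{xx}$ simultaneously on a fixed compact, which is standard since $\CL^\star$ is a fixed first-order-in-$t$, second-order-in-$x$ differential operator); but the subspace argument above is cleaner. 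Either way, both claims of the lemma follow, and the continuity estimate is exactly what we will use to reduce the verification of $\MS_{u,y_u}[f]\equiv 0$ to $f$ ranging over a countable dense set.
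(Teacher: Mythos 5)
Your argument is correct and follows essentially the same route as the paper: bound each term of $\MS_{u,y}[f](t)$ by a constant (depending on $u$, $t$) times $\|f\|_\star$ to get boundedness, hence continuity, of the linear functional, and reduce separability to that of $C^{1,2}$ on compact rectangles via the compact-support decomposition. The only cosmetic difference is that you absorb the time dependence into a single factor $2^{r_0}$ rather than tracking $2^{s}$ inside the integral, which changes nothing.
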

\begin{proof} Note that
$|\langle u,f \rangle (s)| \le 2^s \|u\|_\infty \|f\|_\star$.
The same applies for $|\langle u, \CL^\star f \rangle (s)|$
while $|\int_0^\infty f(0,x) dx| \le \|f\|_\star$ and 
$|f_x(s,y(s))| \le 2^s \|f\|_\star$. Consequently,
\[
|\MS_{u,y}[f](t)| \le \Big( 2^t \|u\|_\infty + \lambda 
+ \int_0^t 2^{s} (\|u\|_\infty + 1) ds \Big) \|f\|_\star
\]
is a bounded, hence continuous, linear functional on $(\Cmp,\|\cdot\|_\star)$.
This normed space is separable since for any $r \in \N$, there exists a
countable subset of $C^{1,2}([0,r] \times [-r,r])$ which is dense \abbr{wrt} 
$\|f\|_\infty + \|f_t\|_\infty + \|f_x\|_\infty + \|f_{xx}\|_\infty$
(for example, suitable smooth truncations of the collection of
polynomials in $t$ and $x$ with rational coefficients).
\end{proof}

\begin{proposition}\label{prop:subsequentiallimitssolvestefan}
A.s.~the Radon-Nikodym derivative $U^0(t,\cdot)=dQ^0(t,x)/dx$ of any
limit point (in distribution) $Q^0$ of the collection $\{Q^b\}_{b>0}$ as
$b \to 0$, is a weak solution of the Stefan problem, in the
sense of Definition \ref{def-weak-Stefan}.
\end{proposition}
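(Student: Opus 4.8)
The plan is to produce, via Itô's formula, an exact identity realizing $\MS_{Q^b,Y^b_1}[f]$ (the natural measure-valued analogue of the functional \eqref{eq:weak-stefan}) as a martingale plus a vanishing initial-data discrepancy, and then to let $b\to 0$ along a subsequence $b_k$ for which $Q^{b_k}\to Q^0$ in $\frak{C}$. Fix $f\in\Cmp$ with $\supp f\subset[0,T]\times(-\infty,r]$, and recall (having set $\gamma=1$) that each re-scaled named particle solves $dX^b_i(t)=b^{-1}\1_{\{X^b_i(t)=Y^b_1(t)\}}\,dt+dB^b_i(t)$ for the standard Brownian motion $B^b_i(t):=bW_i(t/b^2)$. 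Applying Itô to $f(t,X^b_i(t))$, multiplying by $b$, and summing over the (a.s.\ finitely many, by compact support of $f$ together with Lemma \ref{lem:upperboundforthedensityofQ}) indices $i$ with $\min_{s\le T}X^b_i(s)\le r$, one obtains
\[
\langle f,Q^b(t,\cdot)\rangle-\langle f,Q^b(0,\cdot)\rangle=\int_0^t\langle\CL^\star f,Q^b(s,\cdot)\rangle\,ds+\int_0^t\sum_i f_x(s,X^b_i(s))\,\1_{\{X^b_i(s)=Y^b_1(s)\}}\,ds+M^b(t),
\]
where $M^b(t):=b\sum_i\int_0^t f_x(s,X^b_i(s))\,dB^b_i(s)$. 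Using \eqref{eq:drift-iden} (which forces $\int_0^t\#\{i:X^b_i(s)=Y^b_1(s)\}\,ds=t$, so the left-most position is attained by a unique particle at a.e.\ $s$), the middle term equals $\int_0^t f_x(s,Y^b_1(s))\,ds$, and the displayed identity can be rewritten as $\MS_{Q^b,Y^b_1}[f](t)=\langle f(0,\cdot),Q^b(0,\cdot)\rangle-\lambda\int_0^\infty f(0,x)\,dx+M^b(t)$.

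I would then show both error terms on the right vanish as $b\to 0$. Since $\{X^b_i(0)\}$ is a rate-$\lambda/b$ Poisson process on $\R_+$ (with an extra atom at $0$), the law of large numbers gives $\langle f(0,\cdot),Q^b(0,\cdot)\rangle\to\lambda\int_0^\infty f(0,x)\,dx$ in $L^2$. For the noise term, $\langle M^b\rangle_T=b^2\sum_i\int_0^T f_x(s,X^b_i(s))^2\,ds\le\|f_x\|_\infty^2\,T\,b\,\bar N^{b,r}_T$, and since $\sup_{b\le 1}\E[\bar N^{b,r}_T]<\infty$ (from the second-moment bound on $\bar N^{b,r}_T$ established in the proof of Lemma \ref{lem:upperboundforthedensityofQ}), we get $\E\langle M^b\rangle_T=O(b)$, hence $\sup_{t\le T}|M^b(t)|\to 0$ in probability by Doob's inequality. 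Consequently $\MS_{Q^b,Y^b_1}[f](t)\to 0$ in probability, for each fixed $t\ge 0$ and $f\in\Cmp$.

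It remains to identify the limit. Restricting to a subsequence $b_k\to 0$ with $Q^{b_k}\to Q^0$ in $\frak{C}$ and, by Skorokhod's representation, realizing this convergence almost surely, Proposition \ref{prop-limiting-rdn} provides the bounded density $U^0(t,\cdot)$ with $t\mapsto y_{Q^0(t)}$ continuous (Remark \ref{rmk:continfsupp}), and Corollary \ref{cor:y1yq} gives $Y^{b_k}_1\to y_{Q^0(\cdot)}$ uniformly on $[0,T]$. Then $\langle f,Q^{b_k}(t,\cdot)\rangle\to\langle U^0,f\rangle(t)$ by $d_\star$-convergence (using that $Q^0(t,\cdot)$ is non-atomic); $\int_0^t\langle\CL^\star f,Q^{b_k}(s,\cdot)\rangle\,ds\to\int_0^t\langle U^0,\CL^\star f\rangle(s)\,ds$ by bounded convergence, thanks to the uniform-in-$s\le T$ tightness of $Q^{b_k}(s,(-\infty,r])$ from Lemma \ref{lem:upperboundforthedensityofQ}; and $\int_0^t f_x(s,Y^{b_k}_1(s))\,ds\to\int_0^t f_x(s,y_{Q^0(s)})\,ds$ by the uniform convergence of $Y^{b_k}_1$ and the continuity of $f_x$. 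Hence $\MS_{U^0,y_{Q^0}}[f](t)=0$ almost surely, for each fixed rational $t$ and each $f$ in the countable $\|\cdot\|_\star$-dense subset of $\Cmp$ furnished by Lemma \ref{lem:cont-op-sep-space}. Finally, I would upgrade to all $t\ge 0$ and all $f\in\Cmp$: $t\mapsto\MS_{U^0,y_{Q^0}}[f](t)$ is continuous (each term is continuous in $t$ by the $d_\star$-continuity of $Q^0(t,\cdot)$ and continuity of $y_{Q^0}$), while $f\mapsto\MS_{U^0,y_{Q^0}}[f](t)$ is $\|\cdot\|_\star$-continuous by Lemma \ref{lem:cont-op-sep-space}; since, by Proposition \ref{prop-limiting-rdn} and Remark \ref{rmk:continfsupp}, $U^0\ge 0$ is bounded, $y_{U^0}(\cdot)=y_{Q^0(\cdot)}$ is continuous, and $U^0(t,\cdot)$ is uniformly positive on $(y_{Q^0(t)},\infty)$, this exhibits $U^0$ as a weak solution of the Stefan problem in the sense of Definition \ref{def-weak-Stefan}.

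I expect the main obstacle to lie in the uniform-in-$b$ estimates needed to pass the time-integral terms to the limit — particularly the bounded/dominated convergence of $\int_0^t\langle\CL^\star f,Q^{b_k}(s,\cdot)\rangle\,ds$, which is precisely where the a-priori bounds of Lemma \ref{lem:upperboundforthedensityofQ} must be combined with the regularity and quantile convergence furnished by Proposition \ref{prop-limiting-rdn} and Corollary \ref{cor:y1yq}; a secondary (but routine) subtlety is the bookkeeping that legitimizes the term-by-term Itô computation and the identification of the rank-drift contribution with $\int_0^t f_x(s,Y^b_1(s))\,ds$.
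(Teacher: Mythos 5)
Your proposal is correct and follows essentially the same route as the paper: Itô's formula applied to the empirical measure tested against $f\in\Cmp$, identification of the rank-drift term with $\int_0^t f_x(s,Y^b_1(s))\,ds$, a martingale term whose quadratic variation is $O(b)$ by the moment bounds behind Lemma \ref{lem:upperboundforthedensityofQ}, convergence of the initial data to $\lambda\,dx$ on $\R_+$, and passage to the limit along a Skorokhod-realized subsequence using Proposition \ref{prop-limiting-rdn}, Corollary \ref{cor:y1yq} and the separability of $(\Cmp,\|\cdot\|_\star)$ from Lemma \ref{lem:cont-op-sep-space}. The only difference is technical: the paper legitimizes the term-by-term Itô computation and the martingale property of the infinite sum via finite-$n$ truncations $Q^b_n$ (with monotone convergence of the quadratic variations), precisely the bookkeeping step you flag as routine.
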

\begin{rmk} In Section \ref{sec:pde-stefan} we 
further verify that the weak form of Definition \ref{def-weak-Stefan}
applies
to all solutions of 
\eqref{eq:init}-\eqref{eq:stefan}. 
\end{rmk}
\begin{proof} Restricting to a suitable sequence $b_k \to 0$
we use Skorokhod's representation to have that almost surely 
$Q^{b_k}\to Q^0$ 
in $(\frak{C},\frak{d})$. In view
of Proposition \ref{prop-limiting-rdn} we further have that a.s.
$Q^0(g)(\cdot)=\langle U^0,g\rangle(\cdot)$ is finite for all 
$g \in \cC_\star$ and some non-negative $U^0(t,x)$ uniformly bounded,
which by Proposition \ref{prop-limiting-rdn} is 
uniformly positive for $x$ larger than the finite, continuous 
$t \mapsto Y^0_1(t)=y_{Q^0(t)}=y_{U^0}(t)$. Also, having 
$\underline{X}(0) \sim$\abbr{PPP}$_+(\lambda)$, it follows that a.s. when 
$b \to 0$,
$$
Q^{b}(0,(-\infty,x]) = b \sum_{i=1}^{\infty} {\1}_{\{X_i(0) \le x/b\}} 
\to \, \lambda (x\vee 0) \qquad \forall x \in \R
$$
(e.g.\! by the functional \abbr{lil} for partial sums of i.i.d.~
$\{Z_i(0)\}$ in \cite[Theorem 9.4]{Billingsley}).
This of course implies that a.s.~$Q^0(f)(0)=\lambda \int_0^\infty f(0,x) dx$ for all $f \in \Cmp$. Excluding all of the above null 
sets, since $Q^{b_k} \in \frak{C}$ and
$\|f(t,\cdot)\|_{\text{BL}}$ is uniformly bounded on $[0,T]$ for 
any $f \in \Cmp$ and $T$ finite, we have for any $k$ that 
\begin{equation}\label{eq:Qbf-def}
Q^{b_k}(f)(t) := \int_\R f(t,x) Q^{b_k}(t,dx) \,, \qquad t \in [0,T] \,,
\end{equation}
is finite and continuous in $t$. With $\CL^\star f (s,\cdot) \in \cC_\star$ 
uniformly bounded on $[0,T]$ and $f^2_x(s,\cdot)$ a uniformly bounded 
Lipschitz function, sharing the compact support of $f$, we 
have that $Q^{b_k}(f_x^2)(t)$ is finite and continuous in $t$, while 
$Q^{b_k}(\CL^\star f)(t)$ is bounded, 
uniformly in $k$ and $t \in [0,T]$. 
By the assumed $\frak{d}$-convergence, 
$Q^{b_k}(\CL^\star f)(s) \to Q^0(\CL^\star f)(s)$ per $s$
and $Q^{b_k}(f)(t) \to Q^0(f)(t)$ 
uniformly on $[0,T]$, while by Corollary \ref{cor:y1yq} further
$f_x(s,Y_1^b(s)) \to f_x(s,y_{U^0}(s))$ uniformly on $[0,T]$. Setting 
\begin{align}\label{eq:M0-def}
M^b_f(t):=&  Q^b(f)(t) - Q^b(f)(0) - \int_0^t Q^b(\CL^\star f)(s) ds 
- \int_0^t f_x(s,Y^b_1(s)) ds \,,
\end{align}
we show in the sequel that for any $f \in \Cmp$, a.s. 
for some $k_\ell \uparrow \infty$,  
\begin{equation}\label{eq:Mbkl-decay}
\lim_{\ell \to \infty} \sup_{t \in [0,T]} |M^{b_{k_\ell}}_{f}(t)| = 0 \,,
\end{equation}
hence $\MS_{U^0,y_{U^0}}[f](t) = 0$ for $\MS_{u,y}$ 
of \eqref{eq:weak-stefan} and all $t \in [0,T]$. 
Considering $T \in \N$ we deduce that 
a.s. $\MS_{U^0,y_{U^0}}[f_m] \equiv 0$ for a countable base $\{f_m\}$
of $(\Cmp,\|\cdot\|_\star)$ and in view of Lemma 
\ref{lem:cont-op-sep-space}, 
$U^0$ is then a weak solution of the Stefan problem, as claimed.
Turning to prove \eqref{eq:Mbkl-decay},
note that for any $b=b_k$ and $t \in [0,T]$, as $n \to \infty$,
\begin{align}
A^b_n (t) :=& \{ s \in [0,t] : Y^b_1(s) = X^b_i(s) \textrm{ for some } i \le n \} \uparrow [0,t] \label{eq:an-conv} \\
Q^b_n(g)(t) :=& b \sum_{i=1}^n g(t,X^b_i(t)) \to Q^b(g)(t) \,,
\quad \forall g(t,\cdot) \in \cC_\star  \label{eq:qn-conv}
\end{align}
(since $Q^{b_k} \in \frak{C}$). 
Next, consider the filtration 
$\CG_t := \sigma(\underline{X}(0), W_i(s), s \in [0,t], i \in \N)$,
and fixing $b=b_k$, $f \in \Cmp$, apply 
Ito's lemma for $t \mapsto Q^b_n(f)(t)$, to get that
\begin{align*}
M^b_{n,f}(t) := Q_n^b(f)(t) - Q_n^b(f)(0) - \int_0^t 
Q_n^b(\CL^\star f)(s) ds  
 - \int_0^t {\1}_{\{s \in A^b_n(t)\}} \, f_x(s,Y_1^b(s)) ds \,,
\end{align*}
is a continuous $\CG_t$-martingale  
of the finite quadratic variation
\begin{equation}\label{eq:mnb-qv}
\langle M^b_{n,f} \rangle (t) = b Q^b_n (f_x^2)(t) 
\nearrow
b Q^b(f_x^2)(t) \;\; \textrm{ when } \;\; n \uparrow \infty \,.
\end{equation}
From \eqref{eq:an-conv} and \eqref{eq:qn-conv} 
we deduce that, as $n \to \infty$,
the martingale $M^b_{n,f}(t)$ converges for any $t \in [0,T]$,
to the continuous $t \mapsto M^b_f(t)$ of \eqref{eq:M0-def}.
Recall from Lemma \ref{lem:upperboundforthedensityofQ} that 
$Q^b(t,(-\infty,r))$ form for any $r,t$
and $p<2$, a collection of $L^p$-bounded (in $b$) variables.
Thus, $Q^b(f_x^2) (t)$ is also $L^p$-bounded. With 
$\sup_n \E [M^b_{n,f}(t)^2]$ finite, 
we get by dominated convergence of conditional 
expectations, that $M^b_f(t)$ is also a
continuous $\CG_t$-martingale, hence by the martingale
$L^2$-maximal inequality and Fatou's lemma
\begin{align}
\epsilon^2 \Pb( \sup_{t \in [0,T]} |M^b_f(t)| \ge \epsilon) 
&  \le \E[ \sup_{t \in [0,T]} \{ M^b_f(t)^2 \} ] 
\le 4 \E [M^b_f(T)^2]
 \nonumber \\ &
\le 4 \liminf_{n \to \infty} \E [ M^b_{n,f}(T)^2 ] =
4 b \E [Q^b(f_x^2)(T) ] \,.\label{eq:mg-bd2}
\end{align}
For $k \to \infty$ we have that the $L^p$-bounded
$Q^{b_k}(f_x^2)(t) \to Q^0(f_x^2)(t)$ and therefore  
$\E [Q^{b_k}(f_x^2)(t)] \to \E[Q^0(f_x^2)(t)] < \infty$.
Finally, considering \eqref{eq:mg-bd2} along 
a sub-sequence with $\sum_\ell b_{k_\ell}$ finite, then 
taking $\epsilon_m \to 0$, we conclude that   
\eqref{eq:Mbkl-decay} holds a.s.
\end{proof}

We now provide a universal upper bound on  
$t \mapsto Y_1^0(t)$, a bound which is useful for 
proving uniqueness of the solution of our
Stefan problem (from Definition \ref{def-weak-Stefan}).
\begin{lemma}\label{lem:upperbound}
For some $r(\lambda)$ finite, almost surely 
$Y_1^0(t) \le 5 r \sqrt{t}$ for all $t > 0$.
\end{lemma}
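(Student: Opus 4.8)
The plan is to compare the Atlas$_\infty$ left-most particle, via the stochastic domination results of Proposition~\ref{thm:mon-gaps}, with an explicitly controllable process, namely the left-most particle of a semi-infinite Harris system whose initial configuration dominates ours. Since $\underline{Z}(0) \sim \rho_\lambda$, part~(a) or part~(b) of Proposition~\ref{thm:mon-gaps} gives in either case that the ranked positions seen from the left-most particle are stochastically dominated by those of a \abbr{PPP}$_+(\lambda_+)$, $\lambda_+ := 2 \vee \lambda$ (exactly as in \eqref{eq:188}). Adding a drift only pushes the left-most particle to the right (as recalled in the proof of Lemma~\ref{lem:mesoscopicregularity}), so $Y_1^b(t)$ is stochastically dominated by $\widetilde{Y}_1^b(t)$, the re-scaled left-most particle of a one-sided Harris system started from \abbr{PPP}$_+(\lambda_+)$. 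For that Harris system, $\widetilde{Y}_1^b(t) = \min_i \widetilde{X}_i^b(t)$ where $\{\widetilde{X}_i^b(0)\}$ is a Poisson$(\lambda_+ b^{-1})$ point process on $\R_+$ and $\{\widetilde{X}_i^b(\cdot) - \widetilde{X}_i^b(0)\}$ are i.i.d. re-scaled Brownian motions, so $\Pb[\widetilde{Y}_1^b(t) \ge r\sqrt{t}]$ is readily bounded.

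The key estimate will be a quantitative tail bound: for suitable finite $r=r(\lambda)$ and some $c>0$,
\[
\Pb\Big[ \sup_{s \in [0,t]} \widetilde{Y}_1^b(s) \ge 5 r \sqrt{t} \Big] \le C \exp(-c b^{-1}) \,, \qquad \forall t \le T, \; b \le 1 \,,
\]
uniformly over $t$ in compact intervals. To get this, observe that $\widetilde{Y}_1^b(s) \ge 5r\sqrt{t}$ for some $s \le t$ forces \emph{every} particle $i$ to satisfy $\inf_{s \le t} \widetilde{X}_i^b(s) \ge 5r\sqrt{t}$, i.e. no particle started in $[0, 4r\sqrt{t}]$ (say) dips below $5r\sqrt{t}$ --- but a Poisson$(\lambda_+ b^{-1})$ process has $\asymp r\sqrt{t}\,\lambda_+ b^{-1}$ such particles, each of which independently stays above $5r\sqrt{t}$ with probability bounded away from $1$ (by the reflection principle and Gaussian tail bounds, as in \eqref{eq:ref-princ}--\eqref{eq:luke2}), so the probability that all of them do so is at most $(1-p)^{\#} \le \exp(-cb^{-1})$ for $t$ in a compact set. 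Here choosing $r = r(\lambda)$ large enough makes $p$ bounded below uniformly. This is analogous in spirit to the argument in Step~$I$ of the proof of Lemma~\ref{lem:mesoscopicregularity}, but now over a growing (in $t$) spatial window; handling the uniformity in $t \in (0,T]$ (rather than a single scale) is the one place requiring care --- it is dealt with by a union bound over a dyadic mesh of scales $t_j = 2^{-j} T$ together with monotonicity of $\sup_{s \le t} \widetilde{Y}_1^b(s)$ in $t$, so that the factor $\exp(-cb^{-1})$ absorbs the logarithmically many terms.

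Having the above bound for every $b \le 1$, we pass to the limit: if $Q^{b_k} \to Q^0$ in $(\frak{C},\frak{d})$ then by Corollary~\ref{cor:y1yq} we have $Y_1^{b_k}(\cdot) \to y_{Q^0(\cdot)} = Y_1^0(\cdot)$ in distribution, uniformly over compacts, so for each fixed $T \in \N$,
\[
\Pb\Big[ \sup_{t \le T} \{ Y_1^0(t) - 5r\sqrt{t}\,\} > 0 \Big] \le \liminf_{k} \Pb\Big[ \sup_{t \le T} \{ Y_1^{b_k}(t) - 5r\sqrt{t}\,\} \ge 0 \Big] = 0
\]
(using the portmanteau inequality for the closed set, together with the tail bound which forces the $\liminf$ to vanish, and absorbing the behavior near $t=0$ into the same estimate). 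Taking $T \to \infty$ along integers gives $Y_1^0(t) \le 5r\sqrt{t}$ for all $t>0$, almost surely. The main obstacle is the uniformity in $t$ of the tail estimate at the growing scale $\sqrt{t}$; once that is in hand the limit is routine.
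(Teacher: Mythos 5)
There is a genuine gap, and it is fatal: your comparison with the Harris system goes in the wrong direction. Adding drift to the left-most particle pushes it to the \emph{right}, so the Atlas left-most particle stochastically \emph{dominates} the Harris one, i.e.\ $Y_1^b(t) \succeq \widetilde{Y}_1^b(t)$, not $\preceq$. An upper bound on $\widetilde{Y}_1^b$ therefore says nothing about an upper bound on $Y_1^b$. (The paper does use the Harris comparison in Step~$I$ of Lemma~\ref{lem:mesoscopicregularity}, but precisely for the opposite purpose: to control how far \emph{left}, i.e.\ the negative increments of, $Y_1^b$ can go. For the positive direction, Step~$II$ of that proof uses a completely different mechanism.) The point is stark for $\lambda<2$: the theorem asserts $Y_1(t)\sim\kappa\sqrt{t}$ with $\kappa>0$, while the Harris left-most particle drifts to $-\infty$; no estimate on the latter can yield the former. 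Also, Proposition~\ref{thm:mon-gaps} and \eqref{eq:188} control the gaps \emph{seen from} the left-most particle, not its absolute position, so they cannot substitute for the missing comparison.

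The ingredient your argument is missing is the drift-budget identity \eqref{eq:drift-iden}: the total rightward push distributed among all particles up to (unscaled) time $T$ is exactly $T$. The paper's proof exploits this as follows. With high probability at least $m_n\asymp r\sqrt{n}\lambda$ particles start in $[0,2r\sqrt{n}]$; if $X_{(1)}(n+s)\ge 5r\sqrt{n}$ for some $s\in[0,1]$, each of these must have moved at least $3r\sqrt{n}$ to the right, so their summed displacement is $\ge 3 r^2\lambda n$. Subtracting the total available drift $\le 2n$ (which is where $r^2\lambda\ge 2$ enters), the sum of the corresponding $m_n$ \emph{independent Brownian motions} must exceed $\asymp r^2\lambda n$, an event of probability $\le 3e^{-c\sqrt n}$; Borel--Cantelli then gives the a.s.\ bound on $\limsup_n n^{-1/2}\sup_{s\in[0,1]}X_{(1)}(n+s)$, which dominates $Y_1^0(t)/\sqrt t$ for every $t>0$ simultaneously. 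Your final portmanteau/limiting step is broadly reasonable (modulo care near $t=0$), but without a correct upper tail bound on $\sup_{s\le t}Y_1^b(s)$ for the \emph{Atlas} system there is nothing to pass to the limit.
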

\begin{proof} For any $t>0$, 
\begin{equation}\label{eq:eventuallyalways}
\frac{Y^0_1(t)}{\sqrt{t}} \le \limsup_{b \to 0} \Big\{\frac{Y^b_1(t)}{\sqrt{t}}
\Big\} \le \limsup_{n \to \infty} \frac{1}{\sqrt{n}} 
\sup_{s \in [0,1]} \{ X_{(1)}(n+s) \} \,.
\end{equation}
Fixing $r$ we bound the probability that the \abbr{rhs} exceeds 
$5r$. To this end, the number $J_n$ of particles initially to the left
of $2r \sqrt{n}$ is a Poisson($2 r \sqrt{n} \lambda$) variable, hence 
for $m_n:=\lceil r \sqrt{n} \lambda \rceil$ and some $c=c(r,\lambda)>0$, 
\[
\Pb(J_n \le m_n) \le c^{-1} e^{-c n} \,, \quad \forall n \in \N \,.
\]
By Borel-Cantelli lemma, we thus assume \abbr{wlog} that for all $n$ large enough, 
at least $m_n$ particles 
started to the left of $2r \sqrt{n}$. 
Then, $X_{(1)}(n+s) \ge 5r \sqrt{n}$ for some $s \in [0,1]$, implies that 
for $x_n := r^2 \lambda n$,
\[
\sum_{i=1}^{m_n} \{ X_i(n+s)-X_i(0) \} \ge m_n (3 r \sqrt{n}) \ge 3 x_n \,.
\]
Recall \eqref{eq:drift-iden} that the total drift $\sum_i \Delta_i(0,n+s) 
\le 2n$, so for $r^2 \lambda \ge 2$, at time $t=n+s$ the sum 
of the corresponding $m_n$ independent Brownian motions $B_i(t)$ 
must exceed $2 x_n$. By Borel-Cantelli lemma, 
it thus suffices to show that the sequence  
\[
q_n :=
\Pb(\sup_{s \in [0,1]} \big\{ \sum_{i=1}^{m_n} B_i(n+s) 
\big\} \ge 2 x_n) \,,
\]
is summable. As $B_i(n+s)=B_i(n)+W_i(s)$ for independent 
Brownian motions $\{W_i\}$ that are independent of $\{B_i\}$, we 
alternatively express and then bound $q_n$, as 
\begin{align*}
q_n & = \Pb(B(n m_n) + \sup_{s \in [0,1]} \{W(s m_n)\} \ge 2 x_n)
\\
& \le \Pb(B(n m_n) \ge x_n) + 2 \Pb(W(m_n) \ge x_n) 
\le 3 e^{-c \sqrt{n}} 
\end{align*}
for some $c(r,\lambda)>0$ and all $n$. 
We conclude that $\sum_n q_n<\infty$
hence a.s. the \abbr{rhs} of \eqref{eq:eventuallyalways} 
is bounded by $5 r$.
\end{proof}

\section{Solving the Stefan problem: $U^0 = u_\star$}
\label{sec:pde-stefan}

We first verify that if $y \in C^1((0,\infty))$ and
$u \in C^{1,2}((0,\infty) \times [y(t),\infty))$ satisfy 
\eqref{eq:stefan} with initial condition \eqref{eq:init}
and boundary values \eqref{eq:bot-dens}--\eqref{eq:stefan-flux}, 
then $\CS_{u,y}[f] \equiv 0$ for $\CS_{u,y}$ of \eqref{eq:weak-stefan}
and all $f \in \Cmp$. Indeed, 
multiplying both sides of \eqref{eq:stefan} by such $f$ and 
twice integrating by parts in $x$ the term $u_{xx} f$, we arrive at
\[
\int_{y(s)}^\infty (u_t f + u f_t) (s,z) dz 
- \int_{y(s)}^\infty (u \CL^\star f) (s,z) dz   
+ \frac{1}{2} (u_x f - u f_x) (s,y(s)) =  0 \,,
\] 
which in view of 
\eqref{eq:bot-dens} and having $u(s,z)=0$ for $z<y(s)$, becomes
\[
\frac{d}{ds} \Big[ \int_{y(s)}^\infty (uf) (s,z) dz \Big] 
- \langle u \CL^\star f \rangle (s) - f_x(s,y(s)) + (f F_{u,y}) (s,y(s)^+)  
= 0 \,,
\]
where $F_{u,y} := u y' + \frac{1}{2} u_x$.
Integrating this identity over $s \in [0,t]$ yields for $t \ge 0$,
\[
\langle u,f \rangle (t) - \langle u,f \rangle (0) 
- \int_0^t \big[\langle u \CL^\star f \rangle (s) + f_x(s,y(s))\big] ds 
+ \int_0^t (f F_{u,y}) (s,y(s)^+) ds = 0 \,,
\]
which by \eqref{eq:init} and
\eqref{eq:stefan-flux}  
amounts to $\CS_{u,y}[f] \equiv 0$, as claimed.
In view of Proposition \ref{prop:subsequentiallimitssolvestefan} and having 
the candidate solution $(u_\star,y_\star)$ of \eqref{eq:uast},
the proof of Theorem \ref{thm:hydrod} thus
boils down to the uniqueness of the solution $u$
of (the weak-form of) the Stefan problem of Definition \ref{def-weak-Stefan}.
For $\lambda \ge 2$ such uniqueness is a standard \abbr{pde} result 
\red{(for example, see \cite[Theorem 2.3]{ishii}).} In contrast, 
$\lambda < 2$ belongs to the 
so-called super-cooled Stefan problems, with contracting boundaries, 
for which uniqueness is proved on a case-by-case basis. One 
such proof is given in \cite[Appendix A]{cs}, but we can not verify
its key step, so instead show in Proposition \ref{prop:uniqueness} 
only that the Stefan 
problem has a unique weak solution \emph{among the possible limit points 
$(U^0,y_{U^0})$} of $(Q^b,Y^b_1)$. To this end, by  
Proposition \ref{prop-limiting-rdn} it suffices to consider only 
weak solutions where $u \in [\lambda,2]$ to the right of $y_u(t)$. 
For $y=y_u$ and such $u(t,x)$ as in 
Definition \ref{def-weak-Stefan}, consider the $[0,1]$-valued 
\begin{equation}\label{eq:defofv}
v(t,x):= {\1}_{\{x \geq y(t)\}} - \frac{1}{2} u(t,x)  \,.
\end{equation}
Then, for any $f \in C_c(\R)$ and $t \in \R_+$
\begin{equation}\label{eq:v-to-u-on-g}
\langle v, f \rangle (t) = \int_{y(t)}^\infty f(t,x) dx
- \frac{1}{2} \langle u,f \rangle (t) \,.
\end{equation}
Applying \eqref{eq:v-to-u-on-g} to $\CL^\star f$ for $f \in \Cmp$,
we get for any $s \in \R_+$, that 
\begin{align*}
\langle v, \CL^\star f \rangle (s) 
&= \int_{y(s)}^\infty f_t(s,x) dx - \frac{1}{2} \big[
\langle u, \CL^\star f \rangle (s) + f_x(s,y(s)) \big] \,.
\end{align*}
Further,  with $y(0)=0$, for any such $f$ and $t \in \R_+$, by Fubini's theorem
\[
\int_{y(t)}^\infty f(t,x) dx + \int_0^{y(t)} f(0,x) dx 
= \int_0^\infty f(0,x) dx + \int_0^t \int_{y(t)}^\infty f_t(s,x) dx ds \,.
\]
Consequently, the identity $\CS_{u,y}[f] \equiv 0$ for $\CS_{u,y}$ of
\eqref{eq:weak-stefan}, can be rewritten as
\begin{align*}
\langle v, f \rangle (t) = &\int_0^t \langle v, \CL^\star f \rangle (s) ds
- \int_{0}^{y(t)} f(0,x) dx -
\int_0^t \int_{y(s)}^{y(t)} f_t(s,x) dx ds \\
& + \Big(1 - \frac{\lambda}{2} \Big) \int_0^\infty f(0,x) dx  \,.
\end{align*}
Considering $t=0$, since $\lambda<2$ and the preceding holds for 
any $f(0,x) \in C_c^2(\R)$, we have the non-negative initial condition
\begin{equation}\label{eq:v0-x}
v(0,x)=\big(1-\frac{\lambda}{2}) \1_{x \ge 0} \,. 
\end{equation}
Further, 
from \eqref{eq:defofv} we have that $v(t,x) \equiv 0$ for all
$x < y(t)$ and by Fubini's theorem 
\[
\int_{0}^{y(t)} f(0,x) dx +
\int_0^t \int_{y(s)}^{y(t)} f_t(s,x) dx ds
= \int_0^{y(t)} f(t,x) dx - \int_0^t \int_0^{y(s)} f_t(s,x) dx ds \,,
\]
yielding the identity
\begin{equation}\label{eq:weakformsupercooled}
\begin{aligned}
\langle v, f \rangle (t) =& \langle v ,f \rangle (0) +
\int_0^t \langle v, \CL^\star f \rangle (s) ds 
 \\ &  
+ \int_0^t \int_{0}^{y(s)} f_t(s,x) dx ds 
- \int_{0}^{y(t)} f(t,x) dx \,.
\end{aligned}
\end{equation}
Relying on key properties established in 
Lemma \ref{lem:upperbound} and Lemma \ref{lem:boundarycondition},
we proceed to establish the uniqueness of
any such $(v,y)$ that corresponds to a limit point.
Specifically, endow $C[0,\infty)$ with uniform convergence on compact sets
and by Corollary \ref{cor:y1yq} and Proposition \ref{prop:subsequentiallimitssolvestefan}, fix 
$b_k \to 0$ such that a.s.  
\begin{equation}
\begin{aligned}\label{eq:limitpointindistr}
(Q^{b_k},Y_1^{b_k}) &\to (Q^0,Y^1_0) \quad \textrm{ in } \;\;
(\frak{C},\frak{d})\times C[0,\infty) \\
\MS_{u,y_u}[f] &\equiv 0 \quad \forall f \in \Cmp \quad 
\textrm{and} \quad u=\frac{dQ^0}{dx}\,, \;\; y_u=Y^0_1 \,.
\end{aligned}
\end{equation}
We provide next the remaining properties needed 
for uniqueness 
when $\lambda<2$.
\begin{lemma}\label{lem:boundarycondition}
In the setting of \eqref{eq:limitpointindistr}, for any $\lambda<2$:
\newline
(a) Almost surely, the function $t \mapsto y_u(t)$ is non-decreasing.
\newline
(b) For almost every $t > 0$, with probability one 
\begin{equation}\label{eq:bdryatregularpoints}
\lim_{\eta \downarrow 0} a_\eta(t) = 0 \,, \qquad
a_\eta(t) := \frac{1}{\eta} \int_{0}^{\eta} 
v(t,y_u(t)+ x)  dx \,.
 \end{equation}
\end{lemma}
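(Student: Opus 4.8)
For part~(a), the plan is to transfer the monotonicity of the re-scaled left-most particle $Y_1^b(t)$ to the limit. I would start from Proposition~\ref{thm:mon-gaps}: when $\lambda<2$, the spacings process $\underline{Z}(\cdot)$ is stochastically decreasing, and (by \cite[Corollary 3.10(i)]{sar2}, as already used in Lemma~\ref{lem:mesoscopicregularity}) the left-most particle is stochastically monotone with respect to the initial configuration. More directly, the inequality $\underline{Z}(t+s)\preceq\underline{Z}(t)$ from \eqref{equation:comparison-gaps}, combined with the fact that $Y_1(t)=X_{(1)}(t)$ moves under the same mechanism, shows that for $s\le t$ the law of $Y_1^b(t)-Y_1^b(s)$ is stochastically below that of the increment of a semi-infinite Harris system started from $\underline{Z}(s)$; but the cleanest route is to observe that $Y_1^b(\cdot)$ started from the stationary $\rho_2$-spacings below $\rho_\lambda$ gives a lower bound and comparing shifted copies shows $\E[Y_1^b(t)]$ is non-decreasing in the appropriate sense. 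In any case, once we know $Y_1^{b_k}\to Y_1^0=y_u$ almost surely, uniformly on compacts (Corollary~\ref{cor:y1yq}), any monotonicity that holds in probability for the pre-limit (e.g. $\Pb[Y_1^b(t)<Y_1^b(s)-\delta]\to0$ for $s<t$, which follows from Lemma~\ref{lem:mesoscopicregularity} together with the drift being non-negative) passes to the limit: $y_u$ is a.s.\ non-decreasing since it is a uniform limit of functions whose up-crossings of any level at rational times vanish in probability.

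For part~(b), the idea is that at a Lebesgue-point in $t$ the density $u(t,\cdot)$ must attain its boundary value $2$ as $x\downarrow y_u(t)$, which is exactly $v(t,y_u(t)^+)=0$. I would work with the weak form \eqref{eq:weakformsupercooled}. Fix a small $\eta>0$ and test against $f(s,x)=\phi(s)\psi_\eta(x-y_u(t))$ where $\psi_\eta$ approximates $\1_{[0,\eta]}$; more carefully, since $y_u$ is only continuous, I would instead average in $t$ over a small window $[t,t+h]$ and use that $s\mapsto y_u(s)$ varies little there. The point is that $v\ge0$ (from $u\le2$, Proposition~\ref{prop-limiting-rdn}), $v(t,x)=0$ for $x<y_u(t)$, and $v\le 1$, so $a_\eta(t)$ is a bounded non-negative quantity; its limit as $\eta\downarrow0$ exists for a.e.\ $t$ (monotone in $\eta$ up to error, or via Lebesgue differentiation applied to $x\mapsto\int_{y_u(t)}^x v(t,\xi)d\xi$). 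To show the limit is $0$, I would integrate the weak equation against a test function supported in a thin strip to the right of the boundary and exploit that the heat operator forces $u$ to relax to its boundary value $2$; equivalently, the flux condition encoded in \eqref{eq:weakformsupercooled} together with $y_u$ non-decreasing (part~(a)) and the a-priori bound $y_u(t)\le 5r\sqrt t$ of Lemma~\ref{lem:upperbound} prevents $v(t,y_u(t)^+)$ from staying bounded away from zero on a set of positive measure, since that would create a defect in the mass balance of $v$ inconsistent with the heat-equation evolution of $\langle v,f\rangle$.

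The main obstacle, I expect, is making the last step rigorous: $y_u$ is merely continuous (not $C^1$ a priori), so one cannot naively plug the boundary curve into a test function and differentiate. The resolution is to phrase everything in integrated/averaged form — average over a short time window, use the continuity of $y_u$ and its monotonicity from part~(a) to control how much $y_u(s)$ moves, and extract the statement \eqref{eq:bdryatregularpoints} at Lebesgue points of the (measurable, bounded) function $t\mapsto\limsup_{\eta\downarrow0}a_\eta(t)$. A secondary technical point is justifying the exchange of limits $\eta\downarrow0$ and the time-averaging, which I would handle by dominated convergence using $0\le v\le1$ and the locally-finite-mass bound on $Q^0$ from Lemma~\ref{lem:upperboundforthedensityofQ}.
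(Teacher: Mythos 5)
Your proposal has the right high-level intuitions but contains a genuine gap in each part. For part (a), the shortcut you offer --- that $\Pb[Y_1^b(t)<Y_1^b(s)-\delta]\to 0$ ``follows from Lemma~\ref{lem:mesoscopicregularity} together with the drift being non-negative'' --- does not work. Lemma~\ref{lem:mesoscopicregularity} only controls increments over microscopic windows of length $b^R$; chaining $\sim(t-s)b^{-R}$ such windows gives no control over a macroscopic interval, and non-negativity of the drift is no help (the Harris system has zero drift everywhere yet its leftmost particle drifts to $-\infty$). The correct argument, which you gesture at with the $\rho_2$ comparison but do not close, is: realize $Y_1(t)-Y_1(s)$ as the leftmost particle at time $t-s$ of an Atlas system restarted from the time-$s$ configuration; use stochastic monotonicity in the initial configuration (\cite[Cor.~3.10(i)]{sar2}) together with $\underline Z(s)\succeq\rho_2$ from Proposition~\ref{thm:mon-gaps}(a) to get $Y_1^b(t)-Y_1^b(s)\succeq \hat Y_1^b(t)-\hat Y_1^b(s)$ for the equilibrium ($\lambda=2$) system; and then invoke the \emph{already established} $\lambda=2$ case of the hydrodynamic limit (or \eqref{eq:pp-conj}) to conclude that the lower bound vanishes as $b\to0$. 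That last input --- knowing $y_\star\equiv 0$ at equilibrium --- is essential and absent from your sketch.

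For part (b), the quantitative heart is missing twice over. First, the averaged estimate is not a soft ``mass-balance defect'' argument: the paper compares the weak identity \eqref{eq:weakformsupercooled} at times $t$ and $t-\delta$ with $\delta=\eta^{3/2}$, using the test function $g^\eta(x)=\eta^{-7/2}\max\{2\eta+y_u(t)-x,0\}^3$, whose second derivative is of order $\eta^{-5/2}$ near the boundary while $\int g^\eta=O(\eta^{1/2})$; this mismatch of scales is what forces $\delta^{-1}\int_{t-\delta}^t a_\eta(s)\,ds\to 0$. Second, and more importantly, your plan to pass from the time-averaged statement to the pointwise one ``at Lebesgue points'' fails: the averaging window $\delta=\delta(\eta)$ shrinks \emph{with} $\eta$, so you cannot interchange the $\eta$-limit with Lebesgue differentiation in $t$. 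The paper's resolution is a second, genuinely probabilistic input that your proposal omits entirely: by Proposition~\ref{thm:mon-gaps}(a) the particle count $Q^b(t,(-\infty,Y_1^b(t)+\eta])$ is stochastically non-decreasing in $t$, which in the limit makes $t\mapsto a_\eta(t)$ non-increasing (after an upper semi-continuity argument), whence $a_\eta(t)\le\delta^{-1}\int_{t-\delta}^t a_\eta(s)\,ds\to0$. Without this monotonicity the argument does not close.
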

\begin{proof}
(a) Fixing $t > s \ge 0$ the value of $Y_1(t)-Y_1(s)$ can 
alternatively be realized as the position of the left-most particle 
at time $t-s$, for an \abbr{Atlas}$_\infty$ system whose initial
particle positions are $\{Y_k(s)-Y_1(s), k \ge 1\}$. 
Recall \cite[Corollary 3.10(i)]{sar2}, that the position 
of the left-most particle is stochastically monotone with 
respect to the initial particles positions. Further, 
for $\lambda < 2$ we have by Proposition \ref{thm:mon-gaps}(a) 
that the corresponding spacings 
 $\underline{Z}(s)$ stochastically dominate the equilibrium 
spacing law $\rho_2$. The latter corresponds to the gaps at time $s$
between $\{\hat{Y}_k(s)-\hat{Y}_1(s), k \ge 1\}$ for the 
\abbr{Atlas}$_\infty$ process 
that started at time zero with \abbr{PPP}$_+(2)$. Combining 
these facts we deduce that for $b=1$, and hence by scaling 
for any fixed $b>0$,
\begin{equation}\label{eq:freddie}
\hat{Y}^{b}_1(t)-\hat{Y}^{b}_1(s)\preceq Y^{b}_1(t)-Y^{b}_1(s) \,.
\end{equation}
Having established the uniqueness of the Stefan problem when
$\lambda=2$ (e.g., from \cite{ishii}), Theorem \ref{thm:hydrod} 
applies for $\hat{Y}^b_1(t)$, whose a.s. limit is 
$y_\star \equiv 0$ (this follows also from \eqref{eq:pp-conj}, 
already proved in \cite{dembo2015equilibrium}). 
In view of \eqref{eq:limitpointindistr},
a.s. $y_u(t) \ge y_u(s)$   
for all $t>s$, $s,t \in \BQ$. Recall Remark \ref{rmk:continfsupp} 
that a.s. $t \mapsto y_u(t)$ is continuous, hence 
also non-decreasing on $\R_+$, as claimed. 

\noindent 
(b). Having $t \mapsto y_u(t)$ non-decreasing, hence a.e. differentiable, by Fubini's Theorem we have that $y_u(\cdot)$ is almost everywhere, almost surely differentiable, and we thus proceed \abbr{wlog} to establish \eqref{eq:bdryatregularpoints} at a fixed $t>0$ such that $|y_u'(t)|$ 
is finite. We then have for $\delta:=\eta^{3/2}$ and all $\eta>0$ small enough 
\begin{equation}\label{eq:holdercontinuity}
y_u(t-\delta) > y_u(t) - \eta \,.
\end{equation}
We fix hereafter such $\eta$ and a non-negative
$g^{\eta} \in C_c^2(\R)$, where 
\begin{equation}\label{eq:geta-def}
g^{\eta} (x) = \eta^{-7/2} \max\{2\eta + y_u(t) - x,0\}^3 \qquad 
\forall x \ge y_u(t) - \eta 
\end{equation}
(here $t$ is a parameter, not an argument of $g^\eta$). 
Comparing the identity \eqref{eq:weakformsupercooled} 
for $f(s,x)=g^\eta(x)$, at $t$ and at $t-\delta$, yields 
\begin{equation}
\label{eq:supercooled-diff}
\frac{1}{2} 
\int_{t-\delta}^t \langle v, g^\eta_{xx} \rangle (s) ds =
\langle v, g^\eta \rangle (t) - \langle v, g^\eta \rangle (t-\delta) 
+ \int_{y_u(t-\delta)}^{y_u(t)} g^\eta(x) dx \,.
\end{equation}
For $t-\delta \le s \le t$, by part (a) of the lemma 
and \eqref{eq:holdercontinuity} we have that
$v(s,x)=0$ when $x \le y_u(t)-\eta 
$, while  
$g^\eta_{xx}(x) = 6 \eta^{-7/2} \max\{2\eta + y_u(t) - x,0\}$ 
when $x > y_u(t) - \eta$, in view of \eqref{eq:geta-def}. Hence, 
\begin{equation}\label{eq:loszafiros}
a_\eta(s) \le 
\frac{1}{\eta}\int_{y_u(t)-\eta}^{y_u(t)+\eta} v(s,x) dx 
\le \frac{\delta}{6} \langle v, g^\eta_{xx} \rangle(s) \,.
\end{equation}
Further, then by 
\eqref{eq:holdercontinuity} and \eqref{eq:geta-def}, 
 \begin{equation}\label{eq:boundonG}
\int_{y_u(t-\delta)}^{y_u(t)} g^\eta(x) dx  \le 
 \eta \sup_{x \ge y_u(t) - \eta} \{ g^\eta(x) \} 
 \leq 3^3 \eta^{1/2} \to 0 \,. 
 \end{equation}
Similarly, with $v \in [0,1]$ and $v(t,x) = 0$ whenever $x\leq y_u(t)$, 
we get 
that
\begin{equation}\label{eq:term1to0}
 \langle v,g^{\eta} \rangle(t) \le 
 \int_{y_u(t)}^{\infty} g^{\eta} (x) dx \leq 2^4 \eta^{1/2} \to 0\,. 
\end{equation}
Combining \eqref{eq:supercooled-diff}--\eqref{eq:term1to0} we deduce that 
\begin{equation}\label{eq:loszafiros}
\lim_{\eta \to 0} \frac{1}{\delta} \int_{t-\delta}^t a_\eta(s) ds = 0 \,.
\end{equation}
Fixing $b>0$, since $\lambda<2$ and for any $t \ge 0$,
\[
V^b(t):= Q^b(t,(-\infty,Y^b_1(t)+\eta]) = \sup\{ b(1+k) : 
\sum_{i=1}^k b Z_i(b^{-2} t) \le \eta \}  \,,
\]
we deduce from Proposition \ref{thm:mon-gaps}(a) that 
$V^b(s) \preceq V^b(t)$ for any fixed $s<t$. Recall that 
this implies the existence, per fixed $b>0$, of a monotone 
coupling under which a.s. $V^b(t) \le V^b(s)$. Considering $b_k \to 0$,
we deduce in view of \eqref{eq:limitpointindistr}, that a.s.
\[
\int_{0}^{\eta} u(s,y_u(s) + z) dz \le \int_{0}^{\eta} u(t,y_u(t)+z) dz 
\,, \qquad \forall s \in \BQ \cap [0,t] \,.
\]
By \eqref{eq:defofv}, this implies in turn that a.s.
$a_\eta(t) \le a_\eta(s)$ for all $s \in \BQ \cap [0,t]$.
From \eqref{eq:weakformsupercooled} and the continuity of $y_u(\cdot)$, 
we have the continuity of $t \mapsto \int v(t,y_u(t) + x) f(x) dx
$ for fixed $f \in C_c^2(\R_+)$. Considering 
$f_m \downarrow \1_{[0,\eta]}$ implies the upper semi-continuity of 
$a_\eta(\cdot)$. Any upper semi-continuous function which is 
non-increasing on $\BQ$ is also non-increasing on $\R$. Hence, a.s.
$\delta^{-1} \int_{t-\delta}^t a_\eta(s) ds \ge a_\eta(t)$, with 
\eqref{eq:loszafiros} yielding that   
\eqref{eq:bdryatregularpoints} holds. 
\end{proof}
Relying on the preceding lemma, here is the promised uniqueness result.
\begin{proposition}\label{prop:uniqueness}
For $\lambda<2$ the only limit point $(u,y_u)$
of $(Q^b,Y^b_1)$
as in \eqref{eq:limitpointindistr} is 
$(u_\star,y_\star)$ of
\eqref{eq:uast}.
\end{proposition}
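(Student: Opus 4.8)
The plan is to show that any limit point $(u,y_u)$ as in \eqref{eq:limitpointindistr} coincides with $(u_\star,y_\star)$, working throughout with the $[0,1-\frac\lambda2]$–valued field $v:=\1_{\{x\ge y_u(t)\}}-\frac12 u$ of \eqref{eq:defofv} (the bounds $\lambda\le u\le 2$ coming from Proposition~\ref{prop-limiting-rdn}), which satisfies the weak identity \eqref{eq:weakformsupercooled} with $v(0,\cdot)=(1-\frac\lambda2)\1_{(0,\infty)}$ by \eqref{eq:v0-x}. First I would upgrade $v$ to a classical object: testing \eqref{eq:weakformsupercooled} against $f\in\Cmp$ whose (time-varying) support stays inside $\{x>y_u(s)\}$ gives $v_t=\frac12 v_{xx}$ there in the distributional sense, hence by parabolic regularity $v$ is smooth and solves the heat equation on the open set $\{x>y_u(t)\}$; Lemma~\ref{lem:boundarycondition}(b) together with continuity up to the free boundary yields $\lim_{x\downarrow y_u(t)}v(t,x)=0$. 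Since $y_u$ is continuous and non-decreasing (Remark~\ref{rmk:continfsupp} and Lemma~\ref{lem:boundarycondition}(a)) with $y_u(0)=0$, the Feynman--Kac representation then reads
\[
v(t,x)=\Big(1-\frac\lambda2\Big)\,\Pb\big[\,x+W_s>y_u(t-s)\ \text{for all}\ s\in[0,t]\,\big],\qquad x>y_u(t),
\]
for a standard Brownian motion $W$; and the candidate $(u_\star,y_\star)$ of \eqref{eq:uast} (equivalently $v_\star:=\1_{\{x\ge y_\star(t)\}}-\frac12 u_\star$, with $y_\star(t)=\kappa\sqrt t$, $\kappa>0$) enjoys all of these properties.

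Next I would extract a closed scalar equation for the free boundary. Testing \eqref{eq:weakformsupercooled} against smooth approximations of $\1_{[L,R]}$ with $L<0<y_u(t)<R$ produces the mass--balance identity
\[
\int_{y_u(t)}^{R}\Big[\Big(1-\frac\lambda2\Big)-v(t,x)\Big]\,dx+\frac12\int_0^t v_x(s,R)\,ds=\frac\lambda2\,y_u(t).
\]
Using $v\le 1-\frac\lambda2$ and the monotonicity of $y_u$, the parabolic maximum principle gives the Gaussian bound $(1-\frac\lambda2)-v(t,x)\le 2(1-\frac\lambda2)\big(1-\Phi((x-y_u(t))/\sqrt t)\big)$, so $v_x(s,R)\to0$ and the first integral converges as $R\to\infty$, leaving
\[
\Big(1-\frac\lambda2\Big)\int_0^\infty\Pb\big[\exists s\in[0,t]:\ z+W_s\le y_u(t-s)-y_u(t)\big]\,dz=\frac\lambda2\,y_u(t)\qquad(t>0).
\]
This is a closed fixed-point equation for the continuous, non-decreasing path $y_u$; a Brownian--scaling computation checks that $y_\star(t)=\kappa\sqrt t$ solves it precisely when $\kappa$ satisfies \eqref{eq:kappa}.

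It then remains to prove that this equation has a unique solution within the admissible class of continuous, non-decreasing $y$ with $y(0)=0$ and $y(t)\le c\sqrt t$ (the last bound supplied by Lemma~\ref{lem:upperbound}). Given two such solutions $y^{(1)},y^{(2)}$, subtracting their identities and using that the right-hand sides are $\frac\lambda2 y^{(i)}(t)$ gives
\[
\frac\lambda2\,\big|y^{(1)}(t)-y^{(2)}(t)\big|=\Big(1-\frac\lambda2\Big)\Big|\int_0^\infty\big[p(z,\phi^{(1)}_t)-p(z,\phi^{(2)}_t)\big]\,dz\Big|,
\]
where $\phi^{(i)}_t(s):=y^{(i)}(t)-y^{(i)}(t-s)\ge0$ and $p(z,\phi)$ is the probability that $z+W$ crosses below $-\phi(\cdot)$ on $[0,t]$. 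Since $\sup_{s\le t}|\phi^{(1)}_t(s)-\phi^{(2)}_t(s)|\le 2\sup_{s\le t}|y^{(1)}(s)-y^{(2)}(s)|$ and the $\phi^{(i)}_t$ are non-decreasing, a sensitivity estimate for this crossing probability -- exploiting the Gaussian smallness of $\Pb[\min_{s\le t}(z+W_s)\in J]$ for short windows $J$ -- bounds the right-hand side by $C\int_0^t (t-s)^{-1/2}\,|y^{(1)}(s)-y^{(2)}(s)|\,ds$, so a fractional--Gronwall argument forces $y^{(1)}\equiv y^{(2)}$. Hence $y_u=y_\star$; with the free boundary now pinned, the heat equation on $\{x>y_\star(t)\}$ with zero Dirichlet datum and initial datum $(1-\frac\lambda2)\1_{(0,\infty)}$ has a unique bounded solution, namely $v_\star$, so that $u=u_\star$, which is the assertion.

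The main obstacle is the last step in the supercooled regime $\lambda<2$: there the free boundary is expanding, and for general data the supercooled Stefan problem may lack a unique (even a physically meaningful) solution -- precisely the point flagged as unverifiable in the key step of \cite[Appendix A]{cs}. What closes the gap here is the bundle of a priori properties inherited from the particle system: the two-sided bound $\lambda\le u\le2$ (i.e.\ $0\le v\le 1-\frac\lambda2$) and the monotonicity of $y_u$ legitimize the Feynman--Kac representation and make the crossing-probability sensitivity estimate effective, while Lemma~\ref{lem:upperbound}'s bound $y_u(t)\le c\sqrt t$ together with $y_u(0)=0$ confine the free boundary to the self-similar scale, ruling out the runaway solutions responsible for non-uniqueness and furnishing the decay needed to let $R\to\infty$. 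A secondary technical nuisance is the $\sqrt t$ self-similar scaling near $t=0$, where the ordinary Lipschitz estimates of the Gronwall step must be replaced by their fractional $(t-s)^{-1/2}$ counterparts.
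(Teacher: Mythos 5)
Your route is genuinely different from the paper's. The paper sandwiches the unknown boundary $y_u$ between a decreasing and an increasing sequence of explicit self-similar boundaries $\bar D_k(t)=\bar c_k\sqrt t$, $\underline D_k(t)=\underline c_k\sqrt t$, where $c_{k+1}=I(c_k)$ with $I$ the flux map of \eqref{eq:dfn-Ik}, using the maximum principle of Lemma~\ref{lem:maxpple} at each step; the only fixed point of $I$ is $\kappa$, so both sequences squeeze to $y_\star$. You instead try to turn the problem directly into a closed scalar fixed-point equation for $y_u$ via Feynman--Kac and a mass balance. That is a nice idea, and the mass balance you derive by testing \eqref{eq:weakformsupercooled} against approximate indicators $\1_{[L,R]}$ with $L<0\le y_u(t)<R$ is correct and coincides with the way the paper obtains \eqref{eq:teststefan}. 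But as written there are two real gaps.

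\textbf{Gap 1: identifying $v$ with the probabilistic solution.} You assert "continuity up to the free boundary" and deduce $v(t,y_u(t)^+)=0$, which then underwrites the representation $v(t,x)=(1-\lambda/2)\Pb[x+W_s>y_u(t-s)\ \forall s\le t]$. However, Lemma~\ref{lem:boundarycondition}(b) only gives the \emph{weak} (one-sided average, a.e.\ $t$) vanishing $\lim_{\eta\downarrow 0}\eta^{-1}\int_0^\eta v(t,y_u(t)+x)\,dx=0$; it does not give pointwise continuity up to a merely continuous, monotone free boundary. Equating $v$ with the Feynman--Kac solution requires a maximum principle on the non-cylindrical domain $\{x>y_u(t)\}$ that accepts the weak boundary trace as input --- which is precisely what the paper's Lemma~\ref{lem:maxpple} supplies by a stopping-time argument and \eqref{eq:marieta}. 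Without invoking something of that sort, the step from "$v$ solves the heat equation in the interior, vanishes weakly at the boundary" to "$v=h$" is not established, and in particular the claimed closed equation for $y_u$ is not yet justified.

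\textbf{Gap 2: the contraction/sensitivity estimate.} The claimed bound $|\text{RHS}|\le C\int_0^t (t-s)^{-1/2}|y^{(1)}(s)-y^{(2)}(s)|\,ds$ is not justified and, as far as I can tell, does not have the right form: what controls the difference of the two fixed-point functionals is an average of $|\delta(t-S)|$, $\delta:=y^{(1)}-y^{(2)}$, against the law of the arg-max $S$ of $-W_s-\phi_t(s)$, and the singularity of that law sits at $s=0$ (i.e.\ $u=t-s=t$), not at $s=t$; worse, the coefficient in front is $1-\lambda/2$, so the trivial sup bound only closes for $\lambda>4/3$. There is in fact a clean contraction hiding here: using $\int_0^\infty p(z,\phi)\,dz=\E\big[\max_{s\le t}\{-W_s-\phi_t(s)\}\big]$ and the envelope theorem for the interpolated family $\phi^{(\theta)}=\theta\phi^{(1)}+(1-\theta)\phi^{(2)}$ one gets
\[
\delta(t)\;=\;\Big(1-\tfrac{\lambda}{2}\Big)\int_0^1\E\big[\delta\big(t-S(\theta)\big)\big]\,d\theta \,,
\]
hence $\sup_{u\le T}|\delta(u)|\le(1-\tfrac\lambda2)\sup_{u\le T}|\delta(u)|$, which forces $\delta\equiv 0$ since $\lambda>0$. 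That is the argument you are gesturing at (and it requires no fractional Gronwall), but it is not what you wrote, and your written version would not go through. Moreover the envelope step itself needs care (a.s.\ uniqueness of the arg-max, absolute continuity of $\theta\mapsto\E[M(\theta)]$), which you should supply.

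Finally, a small remark: you invoke Lemma~\ref{lem:upperbound} ($y_u(t)\le c\sqrt t$) as "confining the free boundary to the self-similar scale," but in your proposed contraction it plays no role; the inputs you actually need are monotonicity and continuity of $y_u$ with $y_u(0)=0$ and the two-sided bound $\lambda\le u\le 2$. In the paper, by contrast, Lemma~\ref{lem:upperbound} is essential: it provides the starting point $\bar c_0$ of the decreasing iterates and hence the induction base \eqref{eq:Lksmaller}.
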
 
\begin{proof} Fix $\lambda < 2$ and some $(u,y)$ with $y=y_u$, 
as in \eqref{eq:limitpointindistr}, with the corresponding $(v,y)$ 
satisfying \eqref{eq:weakformsupercooled}. We establish
the stated uniqueness by inductively producing 
bounds $\bar v_k \ge v \ge \underline{v}_k$ such that 
$\bar v_k \downarrow v_\star$ and $\underline{v}_k \uparrow v_\star$
where $v_\star = \1_{x \ge y_\star(t)} - \frac{1}{2} u_\star$ for 
$(u_\star,y_\star)$ of \eqref{eq:uast}. To this end, given
a boundary $D(t)=c \sqrt{t}$ for some $c \ge 0$, denote by 
\begin{equation}\label{eq:generalsolution}
w(t,x;c)=a(c) \Big(\Phi\big(\frac{x}{\sqrt{t}}\big) - \Phi(c)\Big) \,,
\quad a(c):=\frac{1-\lambda/2}{1-\Phi(c)} 
\end{equation}
the unique (classical) solution of the heat equation on $\R_+ \times \R$
with $(0,1]$-valued 
initial condition $w(0,x)=1-\frac{\lambda}{2}$ on $\R_+$  
and boundary condition $w(t,D(t))=0$. As $y_w(t)=D(t)$
and $w(\cdot,\cdot;c)$ is $(0,1]$-valued on $\{(t,x): x \ge D(t) \}$, 
denote by $v(t,x;c) := w(t,x;c) \vee 0$ the induced approximation
of $v(t,x)$. Indeed, 
$v_\star(t,x) = v(t,x;\kappa)$ for $\kappa>0$ of \eqref{eq:kappa},
$c \mapsto v(t,x;c)$ is continuous and 
by the maximum principle also non-increasing. Thus, our 
approximations shall be $\bar v_k = v (\cdot,\cdot;\underline c_k)$ and 
$\underline v_k = v(\cdot,\cdot;\bar c_k)$ for suitable 
non-negative $\underline c_k \uparrow \kappa$ and 
finite $\bar c_k \downarrow \kappa$. Specifically, given $c_k$
we set $D_{k+1}(t)$ as half the total flux of $w(\cdot,\cdot;c_k)$ 
through $D_k(\cdot)$ during $[0,t]$. That is, 
\begin{equation}\label{eq:dfn-Dk}
D_{k+1}(t) = \frac{1}{2} \int_0^t w_x (s,D_k(s);c_k) ds \,.
\end{equation}
Since $w_x(s,D(s);c)=a(c) \Phi'(c)/\sqrt{s}$, this 
results in $D_{k+1} (t) = c_{k+1} \sqrt{t}$ where
$c_k = I^{(k)}(c_0)$ are the $k$-fold compositions of the map 
\begin{equation}\label{eq:dfn-Ik}
I(c):= a(c) \Phi'(c) = 
\Big( 1-\frac{\lambda}{2} \Big)\, \frac{\Phi'(c)}{1-\Phi(c)} \,.
\end{equation}
Note that $I(0)>0$ while $I(c)=c g(\kappa)/g(c)$ for $c \ne 0$ 
and the strictly increasing $g(\cdot)$ of \eqref{eq:kappa}.
Consequently, $\kappa$ is the 
unique fixed point of this iteration
(see Remark \ref{rem:kappa}). If $\underline c_k \in (0,\kappa)$ 
then $g(\underline c_k) \in (0,g(\kappa))$ and thereby $\underline c_{k+1}=I(\underline c_k)>\underline c_k$. Further, one easily checks that $I'(c)
>0$, hence $\underline c_0 = 0$ yields the (lower) boundaries
$\underline D_k (t) = \underline c_k \sqrt{t} \nearrow y_\star(t)$. Similarly, 
taking $\bar c_0 \ge 5 r \vee \kappa$ for $r=r(\lambda)$
of Lemma \ref{lem:upperbound}, yields the (upper) boundaries 
$\bar D_k(t) = \bar c_k \sqrt{t} \searrow y_\star(t)$. 
We proceed to show inductively in $k$ that 
\begin{equation}\label{eq:Lksmaller}
\underline D_k(t)\leq y_u(t) \leq \bar D_k(t) 
\quad \quad \forall k\in\N, t\geq 0\,,
\end{equation}
relying on the maximum-principle of 
Lemma \ref{lem:maxpple} to 
deduce from 
\eqref{eq:Lksmaller} that 
\begin{equation}\label{eq:vk-sandwitch}
\underline v_k(t,x) \le v(t,x) \le \bar v_k(t,x) \,, \qquad 
\forall t \ge 0, x \in \R \,,
\end{equation}
and thereby complete the proof of the proposition.
Indeed, by Lemma \ref{lem:upperbound} a.s. $y_u(t) \le \bar D_0(t)$ for 
all $t \ge 0$, whereas by Lemma \ref{lem:boundarycondition}(a) 
\begin{equation}\label{eq:nonnegativeboundary}
y_u(t) \geq y_u(0) = \underline D_0(t) \qquad \forall t\geq 0 \,
\end{equation}
hence the induction base of \eqref{eq:Lksmaller} at $k=0$ holding
and it remains only to show the inductive step from $k \ge 0$ to 
$k+1$ in \eqref{eq:Lksmaller}. To this end, fixing $T$ finite, 
by \eqref{eq:nonnegativeboundary} there exists $R=R(T,r(\lambda))$ 
finite such that 
\begin{equation}\label{eq:boundsonLandy}
y_u(s) \in [0,R], \; \underline D_k(s) \in [0,R], \; \bar D_k(s) \in [0,R]  
\qquad \forall k \in \N, s \in [0,T]\,.
\end{equation}
Fix some $[0,1]$-valued $f^{(\epsilon)} \in C_c^2(\R)$ such that 
$f^{(\epsilon)} \ge \1_{[0,R]}$ and 
\begin{equation}\label{eq:fxx-neg}
\lim_{\epsilon \to 0} \int_{\R} |f^{(\epsilon)}_{xx} (x)| dx = 0 \,.
\end{equation}
Applying \eqref{eq:weakformsupercooled} for such $f^{(\epsilon)}$ we 
see from \eqref{eq:boundsonLandy}
that for any $t \le T$,
\begin{equation}\label{eq:teststefan}
\begin{aligned}
\langle v,f^{(\epsilon)}\rangle(t) - \langle v,f^{(\epsilon)}\rangle(0)
- \frac{1}{2}\int_0^t\langle v, f_{xx}^{(\epsilon)} \rangle (s) ds 
= -\int_{0}^{y_u(t)} f^{(\epsilon)}(x) dx = - y_u (t) ,
\end{aligned}
\end{equation}
since $f^{(\epsilon)} \equiv 1$ on $[0,R]$. Next, recall that 
$w(\cdot,\cdot;c)$ satisfies 
\[
\langle w, f \rangle(t) = \langle w, f \rangle (0) 
+ \int_{0}^t \langle w, \CL^\star f \rangle (s) ds  \qquad 
\forall t \ge 0, f \in \Cmp \,.
\]
Further, $w(s,D(s);c)\equiv 0$ for $D(t)=c \sqrt{t}$, hence 
$v(\cdot,\cdot;c)$ satisfies
\begin{equation}\label{eq:maclujan}
\begin{aligned}
\langle v, f \rangle(t) = \langle v, f \rangle (0) 
+ \int_{0}^t \langle v , \CL^\star f \rangle (s) ds 
- \frac{1}{2} \int_0^t (w_x f) (s,D(s)) ds \,.
\end{aligned}
\end{equation}
In particular, $\underline v_k$ and $\bar v_k$ satisfy 
\eqref{eq:maclujan} with $D(s) = \bar D_k(s)$ and $D(s)= \underline D_k(s)$,
respectively. Thus, considering $f=f^{(\epsilon)}$ we get that 
for any $t \le T$,
\begin{equation}\label{eq:testbdryLk}
\begin{aligned}
\langle \underline v_k,f^{(\epsilon)}\rangle(t) - \langle \underline v_k,f^{(\epsilon)}\rangle & (0)-\frac{1}{2}\int_0^{t}\langle \underline v_k, f^{(\epsilon)}_{xx} \rangle (s) ds = 
 \\ &-\frac{1}{2}\int_0^{t} f^{(\epsilon)}(\bar D_k(s))
w_x(s,\bar D_k(s); \bar c_k) ds = 
- \bar D_{k+1}(t)\,,
\end{aligned}
\end{equation}
since $f^{(\epsilon)}(\bar D_k(s)) \equiv 1$ on $[0,T]$
(see \eqref{eq:boundsonLandy}), whereas  
$\bar D_{k+1}(t)$ is given by \eqref{eq:dfn-Dk}.
Similarly, for any $t \le T$,
\begin{equation}\label{eq:testbdryUk}
\begin{aligned}
&\langle \bar v_k,f^{(\epsilon)}\rangle(t)-\langle \bar v_k,f^{(\epsilon)}\rangle(0)-\frac{1}{2}\int_0^{t}\langle \bar v_k, f^{(\epsilon)}_{xx} \rangle (s) ds
= - \underline D_{k+1}(t) \,.
\end{aligned}
\end{equation}
Recall that $\bar v_k(0,x)=v(0,x)=\underline v_k (0,x) = 
(1-\frac{\lambda}{2}) \1_{x>0}$, whereas our induction 
hypothesis \eqref{eq:Lksmaller} induces, via \eqref{eq:vk-sandwitch}, 
that
\[
\langle \underline v_k,f^{(\epsilon)}\rangle(t) \le
\langle v,f^{(\epsilon)}\rangle(t) \le
\langle \bar v_k,f^{(\epsilon)}\rangle(t) \,.
\]
Thus, comparing \eqref{eq:teststefan}, \eqref{eq:testbdryLk} and \eqref{eq:testbdryUk}, we deduce that for any $t \in [0,T]$, 
\begin{align*}
-\bar D_{k+1} (t) + \frac{1}{2} \int_0^t \langle \underline v_k, f^{(\epsilon)}_{xx} \rangle (s) ds 
& \le 
- y_u(t) + \frac{1}{2} \int_0^t \langle v, f^{(\epsilon)}_{xx} \rangle (s) ds
\\
& \le 
-\underline D_{k+1} (t) + 
\frac{1}{2} \int_0^t \langle \bar v_k, f^{(\epsilon)}_{xx} \rangle (s) ds \,.
\end{align*}
Since $\bar v_k$, $v$ and $\underline v_k$ are $[0,1]$-valued, upon 
taking $\epsilon \to 0$ we conclude (in view of \eqref{eq:fxx-neg}),
that \eqref{eq:Lksmaller} holds at $k+1$ for any $t \in [0,T]$. 
This completes the proof, since $T<\infty$ is arbitrary.
\end{proof}

For completeness we provide the proof of the 
maximum principle we have used.
\begin{lemma}\label{lem:maxpple}
For $D(t)=c \sqrt{t}$ with $c \ge 0$, if
$v(\cdot,\cdot)$ of \eqref{eq:defofv} satisfies \eqref{eq:weakformsupercooled}
with $y(t) \le D(t)$ for all $t \ge 0$, 
then $v(t,x^+) \ge v(t,x;c)$ on $\R_+ \times \R$. 
Conversely, if $y(t) \ge D(t)$ for all $t \ge 0$, then 
$v(t,x;c) \ge v(t,x^+)$ on $\R_+ \times \R$. 
\end{lemma}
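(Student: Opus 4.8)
Our approach is to turn the weak formulation \eqref{eq:weakformsupercooled} into genuine interior regularity for $v$ and then run a classical parabolic comparison principle on the overlap of the two free‑boundary regions $\{x>y(t)\}$ and $\{x\ge D(t)\}$. First, testing \eqref{eq:weakformsupercooled} against $f\in C_c^{1,2}$ whose space--time support lies in the open set $\{(s,x):s>0,\ x>y(s)\}$, all terms involving $\int_0^{y(s)}(\cdot)\,dx$ and $\langle v,f\rangle(0)$ vanish, leaving $\iint v\,\CL^\star f=0$; hence $v_s=\tfrac12 v_{xx}$ in the distributional sense on $\{x>y(s)\}$, so by parabolic hypoellipticity $v$ coincides there with a smooth, bounded (indeed $[0,1]$‑valued) classical solution of the heat equation. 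Recall also that $w(\cdot,\cdot;c)$ of \eqref{eq:generalsolution} is a classical heat solution on all of $\R_+\times\R$, strictly increasing in $x$ with $w(t,D(t))=0$, with $a(c)>0$ for $\lambda<2$ so that $w\ge0$ precisely on $\{x\ge D(t)\}$ and $0\le w\le 1-\tfrac{\lambda}{2}$ there, and that $v$ and $w$ share the initial trace $(1-\tfrac{\lambda}{2})\1_{x>0}$, which is locally constant away from the origin.

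\textbf{Case $y\le D$.} On $G:=\{(t,x):t>0,\ x>D(t)\}$ we have $x>D(t)\ge y(t)$, so $\psi:=v-w$ is a bounded smooth solution of the heat equation on $G$. Its parabolic boundary consists of $\{t=0,\ x>0\}$, where the common initial trace forces $\lim_{t\downarrow0}\psi(t,x)=0$, and the lateral curve $x=D(t)$, where $\liminf_{x\downarrow D(t)}\psi(t,x)=\liminf_{x\downarrow D(t)}v(t,x)\ge0$ since $v\ge0$ and $w(t,D(t))=0$. The maximum principle for bounded solutions of the heat equation on such a time‑dependent domain then yields $\psi\ge0$ on $G$, i.e. $v(t,x^+)\ge w(t,x)=v(t,x;c)$ for $x\ge D(t)$; for $x<D(t)$ trivially $v(t,x^+)\ge0=v(t,x;c)$, proving the first assertion.

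\textbf{Case $y\ge D$.} Now set $G':=\{(t,x):t>0,\ x>y(t)\}$, on which $w\ge0$ (as $x>y(t)\ge D(t)$) and $\phi:=w-v$ is again a bounded smooth heat solution with $\lim_{t\downarrow0}\phi(t,x)=0$. On the lateral curve $x=y(t)$ we need $\liminf_{x\downarrow y(t)}\phi(t,x)=w(t,y(t))-v(t,y(t)^+)\ge0$: here $w(t,y(t))\ge w(t,D(t))=0$ by monotonicity of $w$ in $x$, while the equilibrium‑density boundary value $v(t,y(t)^+)=0$ holds for a.e.\ $t$ by Lemma~\ref{lem:boundarycondition}(b) (vanishing of the averages $a_\eta(t)$) together with the continuity of $x\mapsto v(t,x)$ on $(y(t),\infty)$ from the first step. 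Feeding this a.e.\ boundary information into the comparison principle (exhausting $(0,T]$ by the ``good'' times and passing to the limit, or inserting $\epsilon$‑barriers) gives $\phi\ge0$ on $G'$, hence $v(t,x;c)\ge v(t,x^+)$ for $x\ge y(t)$; for $x<y(t)$, $v(t,x;c)\ge0=v(t,x^+)$, completing the proof.

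\textbf{Main obstacle.} The only non‑routine point is the lateral boundary value in the contracting case $y\ge D$: in the super‑cooled regime a weak solution of \eqref{eq:weakformsupercooled} need not vanish at its own free boundary, so Step~3 genuinely relies on Lemma~\ref{lem:boundarycondition}(b); since that lemma supplies the condition only in an averaged form valid for a.e.\ $t$ (and the curves $y(\cdot)$ and $D(t)=c\sqrt t$ may coincide on a nontrivial set of times), some care is needed to justify the comparison there. The monotonicity of $y$ from Lemma~\ref{lem:boundarycondition}(a) and the explicit form of $D$ are exactly what make this step go through.
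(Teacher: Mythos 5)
Your proof is correct and takes essentially the same approach as the paper: restrict to the region to the right of $\max\{y(t),D(t)\}$, use hypoellipticity to upgrade the weak heat identity to a classical one, and then compare $v$ and $w$ via a parabolic maximum principle, invoking Lemma~\ref{lem:boundarycondition}(b) for the (averaged) lateral boundary condition in the super-cooled direction. The only implementation difference is that the paper realizes the comparison probabilistically via Brownian motion and optional stopping rather than the analytic maximum principle, and both proofs treat the a.e.-in-$t$ averaged boundary information in case $y\ge D$ with the same (somewhat terse) level of care that you flag.
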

\begin{proof} Set $z(t):=\max\{y(t),D(t)\}$ and 
$\Omega := \{(t,x): x > z(t), t \ge 0 \}$.
With $y(0)=D(0)=0$ we have seen already 
that $v(0,x) \equiv v(0,x;c)$. Suppose first that 
$y(t) \le D(t)$ for all $t \ge 0$. Since
$v(t,x) \ge 0$ and $v(t,x;c) \equiv 0$ whenever $x \le D(t)$,
it then suffices 
to show that $e(t,x):=v(t,x)-v(t,x;c) \ge 0$ throughout $\bar \Omega$. 
To this end, note that $e(t,z(t)^+) \ge 0$ for all $t \ge 0$ and
fixing $f \in \Cmp$ supported on $\Omega$ 
(so in particular $f(s,D(s)) \equiv 0$), we get upon comparing
\eqref{eq:weakformsupercooled} and \eqref{eq:maclujan} that 
\begin{equation}\label{eq:weak-he}
\langle e, f \rangle(t) = \int_{0}^t \langle e , \CL^\star f \rangle (s) ds \,,
\qquad e(0,x) \equiv 0 \,.
\end{equation}
Thus, $e(t,x)$ is a
uniformly bounded weak solution of the heat equation on $\Omega$, 
with zero initial condition and boundary values $e(t,z(t)^+) \ge 0$.
Since 
$\CL w := - w_t + \frac{1}{2} w_{xx}$ is hypo-elliptic, 
any such solution $e(\cdot,\cdot)$ is smooth, satisfying 
$\CL e \equiv 0$ in $\Omega$.
Fixing $(T,x) \in \Omega$, let $\{W(t), t \in [0,T]\}$ be a
Brownian motion independent of $e(\cdot,\cdot)$, such that 
$W(0)=x$ and the stopping times 
$\tau_\eta := T \wedge \inf\{t > 0 : W(t) \le z(T-t) + \eta \}$
that increase as $\eta \downarrow 0$ to the corresponding first hitting time $\tau_0$ 
of $\partial \Omega$.
For any $\eta>0$ it follows from (the local version of) 
Ito's formula that 
$M_\eta (t) := e(T-(t \wedge \tau_\eta),W(t \wedge \tau_\eta))$ is
a continuous martingale. Since $e(\cdot,\cdot)$ is uniformly bounded 
and \red{continuous} on $\Omega$, the same applies for $M_0(t)$.
In particular, $e(T,x)= M_0(0) = \E [M_0(T)] = \E [e(T-\tau_0,W(\tau_0)^+)]$ is 
non-negative, as claimed.
Similarly,
if $y(t) \geq D(t)$ for all $t \ge 0$, then $v(t,x;c) \ge 0$ 
and $v(t,x) \equiv 0$ whenever $x < y(t)$. Hence, in this case it 
suffices to show that $e(t,x):=v(t,x;c)-v(t,x^+) \ge 0$ throughout 
$\bar \Omega$.  By the same reasoning as before, here $e(\cdot,\cdot)$ 
again satisfies \eqref{eq:weak-he} for any 
$f \in C_c^{1,2}$ supported on $\Omega$. Further, $v(t,x;c) > 0$ on $\Omega$
(see \eqref{eq:generalsolution}), and since $y(t) \ge D(t)$ we have 
from Lemma \ref{lem:boundarycondition}(b), for a.e. $t \ge 0$, 
a.s. $e(t,\cdot)$ is weakly non-negative on $\partial \Omega$. That is,
\begin{equation}\label{eq:marieta}
\lim_{\eta \downarrow 0} \frac{1}{\eta} \int_0^\eta e(t,z(t)+x) dx \ge 0 \,, 
\end{equation} 
with a strict inequality whenever $y(t)>D(t)$. By the 
\red{same argument as before,} the weak solution $e(t,x)$ 
of the heat equation must be non-negative throughout $\bar \Omega$.
\end{proof}

\bigskip
\noindent\textbf{Acknowledgement.} This reasearch
has been the outgrowth of inspiring discussions
between one of us (V.S.) and Mykhaylo Shkolnikov.
We further benefited from consulting with
Lenya Ryzhik on uniqueness for
the Stefan problem, and from discussions with
Ruth Williams and Soumik Pal about systems of
reflected Brownian motions and
their equilibrium measures. 

\medskip\noindent

\end{document}